\documentclass[leqno,12pt]{article}
\usepackage[english]{babel}
\usepackage[T1]{fontenc}
\usepackage[utf8x]{inputenc}

\usepackage{hyperref} 
\usepackage{latexsym,amsmath,amsthm,amssymb,amsfonts,amscd}
\usepackage{graphicx} 
\usepackage{pgf,tikz} 
\usetikzlibrary{shapes} 
\usetikzlibrary{arrows} 
\usepackage{float} 
\setlength{\oddsidemargin}{-0.0in}
\setlength{\textwidth}{6.3in}
\setlength{\topmargin}{-0.3in}
\setlength{\textheight}{8.6in}
\evensidemargin\oddsidemargin

\def \e{\mathbb E}
\def \E {{\rm E}} 
\def \p{\mathbb P}
\renewcommand{\P} {{\rm P}}



\def \root{ e }
\def \proot{{e_*}}
\def\px{ {x_*}}

\def\ee{{\rm e}}
\newtheorem{theorembis}{Theorem} 
\def\N{{\mathbb N}}
\newcommand{\fl}[1]{{\lfloor #1 \rfloor}} 
\newcommand{\1}[1]{{{\bf 1}_{\{ {#1} \}}}} 
\newcommand{\parent}[1]{{{#1}_*}} 
\newcommand{\sto}[2][\longrightarrow]{{\substack{ \\ {#1} \\ {#2}}}} 

\def\t{{\mathbb T}}
\def\f{{\mathbb F}}

\def \GW {{\rm GW}}
\def \BW {{\rm BW}}
\def\T{{\mathcal T}}
\def\Tb{{\bf T}}

\def \R{\mathcal R} 
\def \Fb{{\bf F}} 

\def\v{{\mathbb V}}

\def\eps{{\varepsilon}}

\title{Scaling limit of the recurrent biased random walk on a Galton--Watson tree}
\author{Elie A\"id\'ekon\footnote{Sorbonne Universit\'es, UPMC Univ Paris 6, UMR 7599, Laboratoire de Probabilit\'es et Mod\`eles Al\'eatoires, 4 place
Jussieu, F-75005 Paris. E-mail : {\tt Elie.aidekon$@$upmc.fr}} \and  \addtocounter{footnote}{5} Lo\"ic de Raph\'elis \footnote{Sorbonne Universit\'es, UPMC Univ Paris 6, UMR 7599, Laboratoire de Probabilit\'es et Mod\`eles Al\'eatoires, 4 place
Jussieu, F-75005 Paris. E-mail : {\tt Loic.de\_raphelis\_soissan$@$upmc.fr}}}

\begin{document}

\baselineskip=18pt
\setcounter{page}{1}

\renewcommand{\theequation}{\thesection.\arabic{equation}}
\newtheorem{theorem}{Theorem}[section]
\newtheorem{lemma}[theorem]{Lemma}
\newtheorem{proposition}[theorem]{Proposition}
\newtheorem{corollary}[theorem]{Corollary}
\newtheorem{remark}[theorem]{Remark}
\newtheorem{fact}[theorem]{Fact}
\newtheorem{problem}[theorem]{Problem}

\maketitle

\noindent {\bf Abstract}. We show that the trace of the null recurrent biased random walk on a Galton--Watson tree properly renormalized converges to the Brownian forest. Our result extends to the setting of the random walk in random environment on a Galton--Watson tree. \\

\noindent {\bf Keywords}. Random walk, Galton--Watson tree, Scaling limit. \\

\noindent {\bf Mathematics Subject classification (2010)}. 60J80, 60G50, 60F17.

\section{Introduction}

We consider  a Galton--Watson tree $\t$ with offspring distribution $\nu$. The measure $\GW$ denotes the Galton--Watson measure on the space of trees, and $E_{\GW}$ is the expectation with respect to $\GW$. The root is denoted by $\root$. We suppose that the mean number of children $m:=E_{\GW}[\nu]$ is strictly greater than $1$ so that the tree is super-critical. We write $\GW^*$ for the Galton--Watson measure conditioned on $\t$ being infinite. \\ 

We call $\nu(x)$  the number of children of the vertex $x$ in $\t$. For $x\in \t\backslash\{e\}$,  we denote by $\px$ the parent of $x$, that is the neighbour of $x$ which lies on the path from $x$ to the root $\root$, and by $xi,1\le i\le \nu(x)$ the children of $x$. We let $|x|$ be the height of the vertex $x$, that is the graph distance between the root and $x$. Fix a tree $\t$. For $\lambda\ge 0$, the $\lambda$-biased random walk $(X_n)_{n\ge 0}$ is the Markov chain on the graph $\t$ which starts at $\root$ and such that
\begin{eqnarray}
\P_{\t}(X_{n+1}=\px\,|\, X_n=x) &=& {\lambda \over \lambda+\nu(x)}, \label{p(x,px)}\\
\P_{\t}(X_{n+1}=xi\,|\, X_n=x) &=& {1\over \lambda + \nu(x)} \; \; \mbox{for any} \;\; 1\le i\le \nu(x) \label{p(x,xi)}.
\end{eqnarray} 

\noindent To define the transition probabilities from the root, we artificially add a parent $\proot$ to the root, and we suppose that the Markov chain is reflected at $\proot$. We denote by $\P_{\t}$ the quenched probability associated to the Markov chain $(X_n)_n$ on the tree $\t$ and by $\p$, resp. $\p^*$, the annealed probability obtained by averaging $\P_\t$ over $\GW$, resp. $\GW^*$. They are associated to the expectations $\E_{\t}$, $\e$ and $\e^*$.  \\

When $\lambda<m$, the Markov chain is  transient for $\GW^*$-almost every tree, see Lyons~\cite{Lyons}. 
We refer to the works of Lyons, Pemantle and Peres~\cite{LyPePe95}, \cite{LyPePe96}, \cite{LyPePe97} for the study of the transient biased random walk, and open questions. We consider here the null recurrent case $\lambda=m \in(1,\infty)$. Peres and Zeitouni~\cite{PeZe06} showed  a central limit theorem for the height of the walk.\\

{\bf Theorem [Peres, Zeitouni~\cite{PeZe06}]} {\it Assume $m\in(1,\infty)$, $\lambda=m$ and some exponential moments for $\nu$. Let $\sigma^2:= {m(m-1) \over E_{\GW}[\nu(\nu-1)]}$. For $\GW^*$-almost every tree, the process $\left\{|X_{\lfloor nt \rfloor }| / \sqrt{  \sigma^2 n}\right\}_{t\ge 0} $ converges in law towards $(|B_{t}|)_{t\ge 0}$, where $(B_{t})_{t\ge 0}$ is a standard Brownian motion.}\\
 
This theorem was proved by finding an explicit invariant measure on the space of trees, and showing an invariance principle for a martingale which approximates the process $(|X_{n}|)_{n\ge 0}$. Dembo and Sun~\cite{dembo} extended the theorem to the case where $\t$ is a multi-type Galton--Watson tree, assuming only a moment of order $4+\varepsilon$, for some $\varepsilon>0$. A natural  question is now to understand the trace of the walk $(X_{n})_{n}$ in the tree $\t$. Let $\R_{n}:=\{X_{k},\, k\le n\}$ be the set of vertices visited by the walk until time $n$, and $R_{n}$ its cardinal, also called the range. Notice that $\R_{n}$ is a tree. We will consider it as a metric space, where each edge has length $1$.  From this point of view, $\R_{n}$ is an unlabeled tree. Our main theorem says that $\R_{n}$ suitably normalized converges in distribution to  the real tree coded by $(|B_{t}|)_{t\in[0,1]}$. We briefly recall its construction taken from~\cite{DuLG05}. We refer to~\cite{legall} for a review on random real trees. \\

Let $g$ be a continuous function from $[0,1]$ to $\mathbb{R}_{+}$ (it is usually assumed that $g(1)=0$ but it is not the case here). For any $s,t \in [0,1]$, define $d_{g}(s,t):= g(s)+g(t)-2 \min\{g(r); r\in[\min(s,t),\max(s,t)]\}$. Using the equivalence relation $s\sim t \Leftrightarrow d_{g}(s,t)=0$, we see that $d_{g}$  defines a metric on the quotient space $\T_{g}:= [0,1] / \sim$. The metric space $(\T_{g},d_{g})$ is the real tree encoded by $g$. The space of all real trees is equipped with the Gromov-Hausdorff metric $d_{G}$ (see Section 1 of~\cite{legall}). Taking  for $g$ a normalized Brownian excursion, $\T_{g}$ is the continuum random tree, also called Brownian tree, introduced by Aldous~\cite{aldousI},\cite{aldousIII}. Here we will take for $g$ the reflected Brownian motion $|{\bf B}|:=(|B_{t}|)_{t\in[0,1]}$. In this case, $\T_{g}$ can be seen as a Brownian forest explored up to time $1$.  For $r>0$, the notation $r\R_{n}$ denotes the tree $\R_{n}$ with edge length $r$. \\

\begin{theorem}\label{t:main}
Assume that $m\in(1,\infty)$, $\lambda=m$ and let $\sigma^2:= {m(m-1) \over E_{\GW}[\nu(\nu-1)]}$. Under $\p^*$ (annealed case) and under $\P_{\t}$ for $\GW^*$-a.e. tree $\t$ (quenched case), the following joint  convergence in law holds as $n\to\infty$:
$$
{ 1\over \sqrt{  \sigma^2 n}} \left(\left\{   |X_{\lfloor nt \rfloor }| \right\}_{t\in [0,1]},\R_{n}\right) \Rightarrow \left(|{\bf B} |,\T_{|{\bf B}| } \right)
$$
for the  Skorokhod topology on the space of c\`adl\`ag functions and the Gromov-Hausdorff topology on the space of real trees.
\end{theorem}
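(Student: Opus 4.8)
The plan is to build on the Peres--Zeitouni invariance principle for the height process and to couple the range $\R_n$ with the excursions of the height below its running maximum, so that the combinatorial structure of $\R_n$ is read off from the reflected Brownian motion $|{\bf B}|$ in the same way that a real tree is coded by a function. The starting observation is that the trace $\R_n$ is almost the tree coded by the discrete height function $(|X_k|)_{0\le k\le n}$: every time the walk discovers a new vertex, the height process goes up by one, and the nested excursion structure of $(|X_k|)_k$ below a level records exactly the genealogical relations between the freshly discovered vertices. The main discrepancy is that the walk backtracks and revisits vertices, so the discrete contour of $\R_n$ is run at a different (random, locally varying) speed than the height process; one has to show that, after the $\sqrt{\sigma^2 n}$ rescaling, this time-change is asymptotically deterministic and linear, hence invisible in the Gromov--Hausdorff limit.

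Concretely, I would proceed in the following steps. First, decompose the trajectory $(X_k)_{k\le n}$ into excursions away from the root $\root$ (more precisely away from the artificial parent $\proot$); by the tree structure each such excursion explores one of the subtrees hanging from $\root$, and the sequence of these subtrees, together with the sub-excursions inside them, gives a recursive description matching the forest structure of $\T_{|{\bf B}|}$. Second, for a single excursion, relate the number of distinct vertices discovered between two successive visits of the walk to a given vertex $x$ to the local structure of the environment at $x$; using the explicit invariant measure found by Peres and Zeitouni and the ergodic theorem along the trajectory, show that the number of edges of $\R_n$ grows linearly, $R_n \sim c\, n$ for an explicit constant $c$, and more importantly that the number of distinct vertices visited up to the $k$-th step, as a function of $k$, rescales to a deterministic linear time-change $t\mapsto t/c'$. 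Third, identify the contour/height function of $\R_n$: I would argue that, up to the above time-change and up to an error that is $o(\sqrt n)$ uniformly, the height of $\R_n$'s contour process coincides with $(|X_k|)_k$, so that Theorem~[Peres--Zeitouni] applied jointly with the time-change convergence yields convergence of the (rescaled) contour function of $\R_n$ to $|{\bf B}|$. Fourth, invoke the standard continuity of the map $g\mapsto \T_g$ from c\`adl\`ag functions (with the sup norm, or from the Skorokhod topology after checking the excursion endpoints behave) to the space of real trees with $d_G$, to transfer the functional convergence to the claimed Gromov--Hausdorff convergence of $\tfrac{1}{\sqrt{\sigma^2 n}}\R_n$; the joint convergence with $\{|X_{\lfloor nt\rfloor}|\}_t$ is then automatic since both limits are measurable functionals of the same Brownian motion.

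For the random walk in random environment extension mentioned in the abstract, the same scheme applies verbatim once one replaces the bias $\lambda=m$ by the corresponding criticality condition on the environment and uses the appropriate invariant measure; I would state the argument in enough generality (e.g.\ phrasing the environment at each vertex as an i.i.d.\ conductance/offspring pair satisfying $E[\log(\cdots)]=0$) that the Galton--Watson case with $\lambda=m$ is the special case $\nu$-offspring with deterministic bias.

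The hard part will be Step 2 and Step 3: controlling the time-change uniformly in time and showing the backtracking of the walk does not distort the metric structure of $\R_n$ at scale $\sqrt n$. Two technical subtleties enter here. One is that the walk may, near a vertex $x$, make many excursions into the subtree above $x$ before moving on, and one must show the number of \emph{new} vertices found per unit time concentrates — this needs second-moment or regeneration-type estimates for the biased walk, for which the exponential (or $4+\varepsilon$) moment assumption on $\nu$ is used, paralleling~\cite{dembo}. The other is the $o(\sqrt n)$ control on the difference between the genuine contour of $\R_n$ and the height process $(|X_k|)_k$ near the local maxima, i.e.\ controlling how deep the walk is when it discovers the last vertex of a given subtree versus when it first enters it; this requires a uniform bound on the maximal displacement of the walk within an excursion of duration $o(n)$, which should follow from the same martingale estimates that underlie the Peres--Zeitouni theorem together with a maximal inequality. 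Once these two uniform controls are in hand, the passage to the Gromov--Hausdorff limit via the continuity of $g\mapsto\T_g$ is routine.
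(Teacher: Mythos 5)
Your strategy — use Peres--Zeitouni for the height, claim the contour of $\R_n$ agrees with $(|X_k|)_k$ up to a deterministic time change and $o(\sqrt n)$ error, then push through the continuity of the coding map $g\mapsto\T_g$ — is appealing but has a real gap precisely at the step you flag as ``Step 3.'' The discrepancy between the walk's height $(|X_k|)_k$ and the contour of $\R_n$ is \emph{not} a time change: the two processes have genuinely different nested excursion structures. Each time the walk leaves $x$, visits $\px$, and returns to $x$, a \emph{new} excursion above level $|x|$ is opened in the tree coded by $(|X_k|)_k$, while $\R_n$ has only one subtree above $x$. Equivalently, the real tree coded by $(|X_k|)_k$ (which indeed rescales to $\T_{|{\bf B}|}$) has roughly $\sum_x N_x$ vertices, while $\R_n$ has $R_n = \#\{x : N_x \ge 1\}$ of them, and the identification between them is not a time reparametrization but a wholesale gluing of vertices that are far apart in the coded tree. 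Consequently, the claim that ``after the time change, the discrete contour of $\R_n$ coincides with $(|X_k|)_k$ up to $o(\sqrt n)$'' is exactly the content of the theorem and cannot be invoked; your appeal to a maximal inequality for the walk's displacement bounds oscillations of $(|X_k|)_k$, not the matching between the two coding processes. The issue is also not resolved by a law of large numbers $R_n\sim cn$: that gives the overall time-change slope, but says nothing about whether the revisits occur ``uniformly enough'' along the exploration to make the genealogical structures agree at scale $\sqrt n$.

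The paper sidesteps this by working with the edge local times $N_x^{(k)}$ rather than with the height process. Under $\p$ these form a multi-type Galton--Watson tree with initial type $k$ (Lemma~\ref{l:multi-type}), and the key observation, following Miermont and de Raph\'elis, is to isolate the type-$1$ vertices (edges crossed exactly once upward). From the multi-type tree one builds two reduced ``leafed Galton--Watson trees with edge lengths'': $\Tb^{(w)}$, whose height function reproduces $(|X_k|)_k$, and $\Tb^{(r)}$, whose metric structure reproduces the trace $\R_n$. Both have the same sub-tree of type-$f$ (i.e.\ type-$1$) vertices, and Theorem~\ref{t:loic} of \cite{loic} delivers the \emph{joint} convergence $\frac{1}{\sqrt{\sigma^2 n}}(H^{(w)}(\lfloor nt\rfloor),H^{(r)}(\lfloor nt\rfloor))\Rightarrow(|B_t|,|B_{2t/b_1}|)$ with the same driving Brownian motion (Proposition~\ref{p:loic}) — this is exactly the statement you need but that your Step~3 does not establish. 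The excursion decomposition you propose in Step~1 is also used in the paper, but with the nontrivial twist that one has to cut at vertices of local time $1$ (not at $\proot$) to get genuinely i.i.d.\ pieces, and Lemmas~\ref{l:range} and~\ref{l:longpath} are needed to show the cutting points are dense enough. Finally, your proposal does not address the quenched statement at all; the paper proves it by a separate variance/concentration argument for $\E_\t[F(\Xi_n)]$ using the branching property, and your ergodic-theorem-based time change would give at best the annealed version. As a side remark, the paper only needs $E_{\GW}[\nu^2]<\infty$, not the $4+\varepsilon$ moments you anticipate would be required.
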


Therefore, asymptotically, the random walk $(X_{n})_{n}$ looks like the contour function of its trace, see Section 3 of~\cite{legall} (this statement is not very precise because we deal with unlabeled trees here).  The theorem also extends the result of Peres and Zeitouni~\cite{PeZe06} on the convergence of the height of the random walk under a second moment assumption. The idea of the proof is to look at the local times of the random walk. This strategy was used in the papers of Kesten, Kozlov, Spitzer~\cite{kks} in the case of random walks in random environment on $\mathbb{Z}$, and Basdevant and Singh~\cite{arvinda},\cite{arvindb},\cite{arvindc} in the case of multi-excited random walks on $\mathbb{Z}$ and on the regular tree.   Call an excursion of $(X_{n})_{n}$ the trajectory of the walk before hitting the parent of its starting point. During one excursion, the local times of the edges $(\px,x)$ (i.e. the number of times the directed edge has been crossed) form under $\p$ a multi-type Galton--Watson tree, with initial type $1$. We will show that the successive excursions from the root $\root$ are close to be independent (they are identically distributed but not independent under $\p$). Therefore, $\R_{n}$ is close to a concatenation of i.i.d.\ multi-type Galton--Watson trees, and we use a result of~\cite{loic} on scaling limit of multi-type Galton--Watson trees to complete the proof in the annealed case. In the quenched case, we show that there is an averaging phenomenon, which is reminiscent of (but much easier to prove than) what happens for the large deviations of transient biased random walks on Galton--Watson trees~\cite{dgpz02}.

\bigskip

The same strategy can be applied to the case of random walks on random environment on Galton--Watson trees, see Faraud~\cite{faraud}. We give more details on this account in Section~\ref{s:rwre}.

\bigskip

The paper is organized as follows. In Section~\ref{s:preliminaries}, we recall the result of~\cite{loic} on the scaling limit of certain two-type Galton--Watson trees with edge lengths. In Section~\ref{s:multi-type}, we describe the process of the local times of an excursion of the biased random walk. In Section~\ref{s:reduction}, we construct different reduced trees associated to the trace of an excursion. These reduced trees are simpler to deal with since they fall into the scope of~\cite{loic}, but still contain all the information needed.  We prove Theorem~\ref{t:main} in Section~\ref{s:proof}.  Finally, Section~\ref{s:rwre} deals with the case of random walks in random environment on a Galton--Watson tree.

\section{Preliminaries}
\label{s:preliminaries}
Let $T$ be a finite rooted ordered tree. We refer to Neveu~\cite{Neveu} for the formal construction of a tree. By the representation of~\cite{Neveu}, we can label the vertices of a tree through the set of words $\bigcup_{n\ge 0} \mathbb{N}^n$.  The generation of a vertex is the length of its label, the root being of generation $0$. Since the set of words is equipped with the lexicographical order, we can rank the vertices of $T$ from the smallest (the root) to the biggest. This gives a way to explore the tree, starting from the root and going clockwise, also called depth-first search. The index of a vertex is the rank of the vertex in the depth-first search, the index of the root being set to $0$. \\

 We put on each edge of the tree a non-negative mark, which stands for its length. When not specified, the length of an edge is set to $1$. Therefore the tree $T$ is endowed with a natural metric (or pseudo-metric in the case where some edges have length $0$). The height of a vertex is by definition the distance of the vertex to the root. The height function of the tree $T$ is the function that maps any integer $k\in [\![0,\#T-1 ]\!]$ to the height of the vertex of index $k$ in the depth-first search.  A forest is a sequence of finite  trees $(T_{i})_{i}$, and the height function of a forest is the concatenation of the height functions of the trees $(T_{i})_{i}$. \\

Let us introduce the result of~\cite{loic} that will be used in our proof. Let $T$ be a leafed Galton--Watson tree with edge lengths (as defined in Section~1.1 of \cite{loic}) ; that is $T$ is a 2-type Galton--Watson tree with edge lengths with types denoted by $s$ and $f$ such that vertices of type $s$ give no offspring (they are sterile). The offspring distribution of a vertex of type $f$ can be represented by a random point process $\theta=(\delta_{t(i),\ell(i)})_{i\le N}$ on $\{s,f\}\times \mathbb R_{+}$  where $N$ is the number of children, $t(i)$ is the type ($s$ or $f$) of the $i$-th child for the lexicographical order, and $\ell(i)$ is the length of its edge.  The type of the root is $f$. It gives birth according to $\theta$. Children at generation $1$ of type $s$ have no offspring, while the ones of type $f$  give birth independently according to i.i.d.\ copies of $\theta$ and so on. We suppose that $\sum_{i=1}^{N} {\bf 1}_{\{ t(i)=f \}}$ has mean $1$ and some finite variance $\Sigma_{f}^2\in (0,\infty)$ (we take $\Sigma_{f}\ge 0$). It means that the Galton--Watson tree composed of vertices of type $f$ is critical. Moreover, we suppose that \\
(i) $E[N]=: C_{1}^{-1}<\infty$,\\
(ii) $y^{2}E\left[ \sum_{i=1}^N {\bf 1}_{\{ t(i)=f, \ell(i)>y\}} \right]$ goes to $0$ as $y\to\infty$, \\
(iii) $y^2P\left(\max_{i\le N,t(i)=s}\ell(i)>y\right)$ goes to $0$ as $y\to\infty$. \\

\noindent We denote by $C_2$ the quantity $E\left[\sum_{i=1}^N \ell(i){\bf 1}_{\{ t(i)=f \}}\right]$, which is finite thanks to (ii). Finally we take i.i.d.\ trees $(T_{i})_{i}$ distributed as $T$ and we call $H$ the height function associated to the forest, and $H_{f}$ the height function of the forest restricted to vertices of type $f$.  The following result comes from Theorem~1 of~\cite{loic} (beware that $H_f$ is different from the process $H^1$ introduced in~\cite{loic} since in our case the lengths are not reset to $1$). It states that the height function $H$ is asymptotically given by a deterministic rescaling in time of $H_{f}$. Loosely speaking the forest composed of the vertices of type $f$ captures all the randomness. \\

\begin{theorembis}\label{t:loic} {\bf \cite{loic}} The following joint convergence in law holds as $n\to \infty$ for the Skorokhod topology of c\`adl\`ag functions~:
$$
{1\over \sqrt{n}}\left(H(\lfloor nt\rfloor),H_{f}(\lfloor nt\rfloor)\right)_{t\ge 0} \Rightarrow {2C_{2}\over \Sigma_{f}} \left(|B_{C_{1} t }|,|B_{t}|\right)_{t\ge 0}, 
$$
where $B$ is a standard Brownian motion. 
\end{theorembis}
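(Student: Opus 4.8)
The plan is to prove the statement in two stages: first handle the part of the forest made of fertile ($f$) vertices, equipped with its true edge lengths; then transfer the result to the full height function $H$ by a deterministic time change. Since sterile vertices have no offspring, every non-root vertex has a fertile parent, so the fertile vertices form a subforest $\mathcal F$ containing the roots, and the path from a root to any fertile vertex stays inside $\mathcal F$. Consequently $H_f$ is exactly the (length) height function of $\mathcal F$, with the fertile vertices listed in the same order as in the whole forest. Stripped of its edge lengths, $\mathcal F$ is a forest of i.i.d.\ critical Galton--Watson trees whose offspring law has mean $1$ and variance $\Sigma_f^2\in(0,\infty)$; hence, by classical results on the scaling limit of Galton--Watson forests (see e.g.\ \cite{DuLG05}), its unit-length height function $\widetilde H_f$ satisfies $\widetilde H_f(\lfloor nt\rfloor)/\sqrt n\Rightarrow (2/\Sigma_f)|B_t|$ in the Skorokhod sense, for a standard Brownian motion $B$.

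The main step is then to replace graph distance by length distance in $\mathcal F$. Denoting by $\mathcal A_k$ the set of edges of $\mathcal F$ on the ancestral line of the $k$-th fertile vertex, one has $H_f(k)=\sum_{e\in\mathcal A_k}\ell(e)$ and $\widetilde H_f(k)=\#\mathcal A_k$, while $C_2=E[\sum_i\ell(i)\mathbf{1}_{\{t(i)=f\}}]$ is exactly the expected length of a fertile edge (the expected total fertile edge length created by one fertile vertex, divided by the expected number $1$ of fertile children). I would first prove convergence of the finite-dimensional distributions of $H_f(\lfloor n\cdot\rfloor)/\sqrt n$ to those of $(2C_2/\Sigma_f)|B_\cdot|$: this reduces to $H_f(\lfloor nt\rfloor)-C_2\widetilde H_f(\lfloor nt\rfloor)=o_P(\sqrt n)$ for fixed $t$, which holds because this difference is a sum of the $\widetilde H_f(\lfloor nt\rfloor)=O_P(\sqrt n)$ centred quantities $\ell(e)-C_2$, $e\in\mathcal A_{\lfloor nt\rfloor}$; truncating the lengths at level $n^{1/4}$ gives per-term variance $o(\log n)$ by assumption (ii), hence a sum of size $o(n^{1/4}\sqrt{\log n})=o(\sqrt n)$, while (ii) also forces the untruncated part to be negligible. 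Promoting this to convergence in the Skorokhod space then amounts to the \emph{uniform} estimate $\sup_{k\le n}|H_f(k)-C_2\widetilde H_f(k)|=o_P(\sqrt n)$, which together with the (already known) tightness of $\widetilde H_f(\lfloor n\cdot\rfloor)/\sqrt n$ gives tightness of $H_f(\lfloor n\cdot\rfloor)/\sqrt n$ and identifies its limit.

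For the second stage, let $\tau(m)$ be the index, in the depth-first search of the full $2$-type forest, of the $m$-th fertile vertex. The numbers of sterile children of the successive fertile vertices are i.i.d.\ with mean $E[N]-1=C_1^{-1}-1$, and every sterile vertex is a child of an explored fertile vertex; a functional law of large numbers thus yields $\tau(\lfloor nt\rfloor)/n\to t/C_1$ uniformly on compacts. When the $k$-th vertex of the whole forest is fertile it is the $\tau^{-1}(k)$-th fertile vertex and its height is $H_f(\tau^{-1}(k))$; when it is sterile, its height exceeds that of its fertile parent by one sterile-edge length, and assumption (iii) forces the largest sterile-edge length among the first $n$ explored vertices to be $o_P(\sqrt n)$. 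Hence $H(\lfloor nt\rfloor)/\sqrt n=H_f(\lfloor C_1 nt\rfloor)/\sqrt n+o_P(1)$ uniformly on compact time sets, and since the limit of $H_f(\lfloor n\cdot\rfloor)/\sqrt n$ is continuous and built from the same $B$, the asserted joint convergence to $(2C_2/\Sigma_f)(|B_{C_1\cdot}|,|B_\cdot|)$ follows.

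The main obstacle is the uniform length-for-distance estimate $\sup_{k\le n}|H_f(k)-C_2\widetilde H_f(k)|=o_P(\sqrt n)$. The edge lengths are not independent along a spine — they are tied to the offspring counts through the point process $\theta$ — and assumption (ii) gives only a tail that is \emph{barely} $o(y^{-2})$, so a naive union bound over the $\Theta(\sqrt n)$-scale family of spines of the first $n$ vertices loses logarithmic factors. I expect the right route is to carry out the replacement at the level of the Lukasiewicz encoding of $\mathcal F$ (a centred random walk converging to $\Sigma_f B$), in which the ancestral edges of a vertex are read off from the running-minimum record times of the walk; the comparison then becomes a law of large numbers for a random sum whose number of terms is of order $\sqrt n$, for which assumption (ii) is precisely the hypothesis one needs, combined with a maximal inequality along the exploration to make it uniform. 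The time-change of the second stage, by contrast, is routine.
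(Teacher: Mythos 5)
The paper does not give a standalone proof of Theorem A: it states the result and remarks that it follows from Theorem~1 of~\cite{loic} applied twice, once to the two-type leafed forest $T$ and once to the single-type subforest of vertices of type $f$. Your proposal is therefore a genuinely different, self-contained route, and its skeleton is correct. You identify $H_f$ with the length-height function of the fertile subforest, observe that the latter is a critical Galton--Watson forest with offspring variance $\Sigma_f^2$, recover the unit-length scaling limit $(2/\Sigma_f)|B|$ from the classical height-function theorem, calibrate edge lengths via $H_f\approx C_2\,\widehat H_f$ (where $\widehat H_f$ denotes the fertile height function with all lengths reset to $1$), and pass to $H$ by the depth-first-search time change $\tau(\lfloor nt\rfloor)/n\to t/C_1$ together with the $o_P(\sqrt n)$ control on sterile edge lengths from condition (iii). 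The constants $2C_2/\Sigma_f$ and $C_1$ come out right, and the joint convergence is an honest byproduct of expressing $H$ as a nearly deterministic time-change of $H_f$. What this buys over the paper's bare citation is a transparent account of exactly where each of (i), (ii), (iii) enters.

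The argument is, however, incomplete at the step you yourself flag: the uniform estimate $\sup_{k\le n}\bigl|H_f(k)-C_2\,\widehat H_f(k)\bigr|=o_P(\sqrt n)$. Your fixed-$t$ truncation argument (cut lengths at $n^{1/4}$, use (ii) to get per-term truncated variance $o(\log n)$, sum over $O_P(\sqrt n)$ ancestral edges) gives finite-dimensional convergence, but not the supremum over $k\le n$; and, as you correctly note, the lengths along an ancestral line are not i.i.d.\ since they are coupled to the offspring counts through $\theta$ and to the tree shape through the exploration order. The Lukasiewicz route you sketch --- ancestral edges read off as records of the exploration walk, combined with a maximal inequality over a random sum of order $\sqrt n$ terms --- is the right direction and is essentially the machinery behind Proposition~5 of~\cite{loic}, which Lemma~\ref{l:loic} also draws on; until it is carried out, your argument is a correct reduction to that one technical estimate rather than a complete proof.
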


To be precise, you obtain this result by combining Theorem 1 of~\cite{loic} applied on one hand to the leafed Galton--Watson tree with edge lengths $T$ and on the other hand to the single-type Galton--Watson tree with edge lengths composed only of vertices of type $f$. \\

Let $u(k)$ be the index (for the first-depth search) of the $k$-th vertex of type $f$ visited by  the first-depth search in the forest. Let $ \bar \ell(n)$ be the maximal length of the edges explored by the first-depth search until $n$ vertices of type $f$ have been visited. The following lemma can be found in~\cite{loic} ((i) is the equation which lies right below equation (2.10) in the proof of Proposition 5, and (ii) comes from equation (2.4) in the proof of Proposition~5 together with the use of our condition (iii) to control the length of edges of type $s$) .
\begin{lemma} \label{l:loic}The following convergences hold in probability:
$$
(i)\; \lim_{k\to\infty} {u(k) \over k} \stackrel{(\P)}{=}   {1\over C_{1}} ; \, \;\;\;\;\;
(ii)\; \lim_{n\to\infty} {{ \bar \ell}(n)  \over \sqrt{n} } \stackrel{(\P)}{=}  0.
$$
\end{lemma}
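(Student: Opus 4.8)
The two statements are of a similar nature—they both follow from the fact that, along the depth-first exploration of a critical two-type forest of the kind described, the cumulative contributions of "small" quantities grow like $n$ (law of large numbers effects) while "large" ones are negligible because of the moment conditions (i)--(iii). The plan is to reduce both claims to renewal/ergodic arguments.

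For part (i), I would exploit the tree structure: when one explores the forest in depth-first order and looks only at the vertices of type $f$, the number of type-$s$ vertices inserted ``between'' consecutive type-$f$ vertices is governed by the offspring law of $\theta$ restricted to sterile children. More precisely, $u(k)$ equals $k$ plus the number of type-$s$ vertices among the first $k$ type-$f$ vertices and their recorded offspring. Writing $u(k) = k + \sum_{j\le k} (\text{number of sterile children of the } j\text{-th type-}f\text{ vertex})$ up to boundary terms, and recalling that the subforest of type-$f$ vertices is a critical Galton--Watson forest (so that $k$ type-$f$ vertices are explored in $\Theta(k)$ steps), the sum is a functional of i.i.d.\ copies of $\theta$ indexed by the type-$f$ vertices. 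An ergodic/LLN argument along the spine of exploration then gives $u(k)/k \to 1 + E[\#\{i: t(i)=s\}]$. Since $E[N] = C_1^{-1}$ and $E[\#\{i:t(i)=f\}]=1$, we get $E[\#\{i:t(i)=s\}] = C_1^{-1}-1$, hence $u(k)/k \to C_1^{-1}$ in probability. Alternatively, and more in the spirit of the source, this is exactly the content of the line below (2.10) in \cite{loic}, so I would simply quote it once the correspondence of notation is set up.

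For part (ii), the quantity $\bar\ell(n)$ is the maximum of all edge lengths explored before $n$ type-$f$ vertices are seen. By part (i), the total number of explored edges at that moment is $O(n)$ with high probability, so it suffices to show that the maximum of $O(n)$ edge lengths drawn (in a dependent but structured way) from the two families—lengths of type-$f$ edges and lengths of type-$s$ edges—is $o(\sqrt n)$ in probability. For a single i.i.d.\ family with a variable $L$ satisfying $y^2 P(L>y)\to 0$, one has $P(\max_{j\le Cn} L_j > \eps\sqrt n) \le Cn\, P(L>\eps\sqrt n) \to 0$; conditions (ii) and (iii) are precisely tailored to give $y^2 P(\ell > y)\to 0$ for the type-$f$ and type-$s$ edge lengths respectively (for (ii), note $\sum_i \mathbf 1_{\{t(i)=f,\ell(i)>y\}} \ge \mathbf 1_{\{\text{some type-}f\text{ edge}>y\}}$, and for (iii) it is immediate). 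A union bound over the (random, but $O(n)$) set of explored edges, after conditioning on the exploration reaching $n$ type-$f$ vertices within $Cn$ steps, then yields $\bar\ell(n)/\sqrt n \to 0$. Again, this is the argument of equation (2.4) in \cite{loic} combined with our condition (iii), so citing it suffices.

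The only mild subtlety—and the step I would be most careful about—is the dependency structure of the edge lengths along the exploration: the edge lengths are not globally i.i.d., but each type-$f$ vertex carries an independent copy of $\theta$, so one should condition on the sequence of type-$f$ vertices explored (a critical GW forest, hence $O(n)$ of them to reach depth-first index $O(n)$) and then use that, given this skeleton, the attached edge-length vectors are independent across vertices. With that conditioning in place both parts are routine renewal-theory and union-bound estimates, and the references to \cite{loic} close the argument.
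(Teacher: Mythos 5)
Your proposal is correct and matches the paper's approach: the authors prove this lemma simply by citing de Raph\'elis~\cite{loic} (the line below (2.10) for (i), and equation (2.4) together with condition (iii) for (ii)), which is exactly the citation you end on. Your heuristic outline --- a law of large numbers for the sterile offspring attached to the first $k$ type-$f$ vertices for (i), and a union bound over the $n$ type-$f$ vertices explored, using the second-moment tail conditions (ii)--(iii) applied to a single copy of $\theta$, for (ii) --- correctly reconstructs the argument underlying that citation, with the dependency concern resolved as you note by the i.i.d.\ revelation of the $\theta$'s along the depth-first exploration.
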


\section{Description of the process of local times}

\label{s:multi-type}

Recall that $\t$ is a Galton--Watson tree, in which we artificially added a parent $\proot$ to the root $\root$. Let $\tau_{\proot}^{(1)}:=\min\{n\ge 1\,:\, X_{n}=\proot\}$ be the hitting time of $\proot$ in $\t$. Let $N_{\root}^{(1)}:=1$ and for each vertex  $x\notin\{\proot,\root\}$,
\begin{equation*}\label{def:Nx}
N_{x}^{(1)}:=\sum_{n= 1}^{\tau_{\proot}^{(1)}} {\bf 1}_{\{X_{n-1}=\px,X_{n}=x\}}
\end{equation*}
which stands for the number of crosses of the directed edge $(\px,x)$ during one excursion. More generally, we define for $k\ge 2$, 
$$
\tau_{\proot}^{(k)} := \min\{n>\tau_{\proot}^{(k-1)}\,:\, X_{n} =\proot\},
$$

\noindent then $N_{\root}^{(k)}:=k$ and for each vertex $x\notin\{\proot,\root\}$,
$$
N_{x}^{(k)} := \sum_{n= 1}^{\tau_{\proot}^{(k)}} {\bf 1}_{\{X_{n-1}=\px,X_{n}=x\}}.
$$

\noindent The random variable $\tau_{\proot}^{(k)}$ stands for the $k$-th visit time at $\proot$, and $N_{x}^{(k)}$ is the local time on the directed edge $(\px,x)$ up to that time. 
\begin{lemma}\label{l:multi-type}
Let $k\ge 1$. Under $\p$, the tree $\{x\in \t\backslash\{\proot\}\,:\, N_{x}^{(k)}\ge 1\}$ is a multi-type Galton-Watson tree with initial type $k$.
\end{lemma}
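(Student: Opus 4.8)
The plan is to work first conditionally on the tree $\t$ (i.e.\ under $\P_\t$) and to average over $\GW$ only at the end. The starting point is a flux identity: for every vertex $x\ne\root$, the number $N_x^{(k)}$ of downward crossings of the edge $(\px,x)$ up to time $\tau_\proot^{(k)}$ equals the number of upward crossings of that edge, hence equals the number of times the walk, while sitting at $x$, moves to $\px$ (call such a move a $\uparrow$-step at $x$). Indeed $X_0=\root$ and $X_{\tau_\proot^{(k)}}=\proot$ both lie outside the subtree $\t_x$ rooted at $x$, and $\t_x$ is attached to the rest of the tree only through the edge $(\px,x)$, so the walk enters and leaves $\t_x$ the same number of times. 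For $x=\root$ the same bookkeeping, together with the reflection at $\proot$ and the definition of $\tau_\proot^{(k)}$, shows that the number of $\uparrow$-steps at $\root$ equals $k=N_\root^{(k)}$. Since $\lambda=m$ the walk is null recurrent, so every excursion is $\p$-a.s.\ finite; in particular all these counts are finite, and the last step taken from any visited vertex $x\ne\root$ is a $\uparrow$-step (otherwise the walk would enter a subtree that it must later leave through $x$, contradicting that it is the last visit). Now list, in chronological order, the steps taken by the walk from a fixed vertex $x$: by the Markov property this is an i.i.d.\ sequence that picks $\px$ with probability $\lambda/(\lambda+\nu(x))$ and each child $xi$ with probability $1/(\lambda+\nu(x))$, and the walk uses exactly the prefix of this sequence ending at its $N_x^{(k)}$-th $\uparrow$-step. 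Hence, given $\nu(x)=d$ and $N_x^{(k)}=n$, the vector $(N_{x1}^{(k)},\dots,N_{xd}^{(k)})$ of numbers of steps towards each child before the $n$-th $\uparrow$-step follows an explicit negative-multinomial law $\mathrm{NM}(n,d)$ depending only on $(n,d)$.

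I would then upgrade this into the branching property via the strong Markov property, arguing by induction on the generation. Cut the trajectory up to $\tau_\proot^{(k)}$ at the successive visits to $\root$: by the strong Markov property the pieces of trajectory that begin with a move $\root\to\root i$ are conditionally independent given the sequence of steps taken at $\root$, and the pieces beginning with a move to the same child $\root i$ are i.i.d.\ copies of a single excursion of the walk on $\t_{\root i}$ started from $\root i$. Since there are exactly $N_{\root i}^{(k)}$ such pieces, the trace of the walk inside $\t_{\root i}$ together with all its edge local times is distributed, conditionally on $\t$ and on $(N_{\root i}^{(k)})_i=(n_i)_i$, as the same data read off from the concatenation of $n_i$ independent excursions of the walk on $\t_{\root i}$ started from $\root i$, these being independent over $i$. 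Since $\t_{\root i}$ is again a Galton--Watson tree and such a concatenation of excursions is handled by exactly the same reasoning applied to $\t_{\root i}$ and to the integer $n_i$, an induction on the generation shows that, conditionally on $\t$, the tree $\{x:\,N_x^{(k)}\ge1\}$ with $x$ carrying label $N_x^{(k)}$ is produced by the following recursion: a vertex $x$ of label $n$ has for children those $xi$ with $N_{xi}^{(k)}\ge1$, where $(N_{x1}^{(k)},\dots,N_{x\nu(x)}^{(k)})\sim\mathrm{NM}(n,\nu(x))$, independently across vertices.

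Finally I would obtain the annealed statement by revealing the tree $\t$ along the exploration of the walk, exposing $\nu(x)$ only when the walk first reaches $x$. Because $\t$ is a Galton--Watson tree, $\nu(x)$ is then $\nu$-distributed and independent of everything exposed so far, while the event $\{N_x^{(k)}\ge1\}$ that $x$ is visited does not involve $\nu(x)$; so under $\p$ the offspring of a vertex of type $n$ is generated by drawing $d\sim\nu$, then $(N_1,\dots,N_d)\sim\mathrm{NM}(n,d)$, and keeping (in their original order, now as vertices of respective types $N_i$) those children with $N_i\ge1$, independently across vertices. This is precisely a multi-type Galton--Watson tree, and its root $\root$ has type $N_\root^{(k)}=k$, which is the claim. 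I expect the one genuinely delicate step to be the strong Markov decomposition of the second paragraph: one must verify that conditioning on the local times $N_{\root i}^{(k)}$ does not destroy the independence of the sub-walks in the different subtrees and that each is exactly a concatenation of $n_i$ excursions --- in other words, that the random number of excursions into $\t_{\root i}$ separates cleanly from what those excursions actually do. Once this is set up, the recursion and the averaging over $\GW$ are routine.
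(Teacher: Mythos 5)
Your proof is correct, and it rests on the same core observation as the paper's: the successive moves the walk makes from a fixed vertex $x$ form an i.i.d.\ sequence, the walk consumes exactly the prefix of this sequence ending at the $N_x^{(k)}$-th step towards $\px$ (your flux identity makes this precise), and $(N_{xi}^{(k)})_i$ is read off from that prefix, so independence of the move sequences and of the $\nu(x)$ across vertices gives the branching property. Where you differ is in packaging: the paper attaches i.i.d.\ pairs $(\nu(x),\mathcal P_x)$ to every word $x\in\bigcup_n\mathbb N^n$ \emph{before} defining $\t$ or the walk, so that $(X_n)_n$ is a deterministic functional of this ambient i.i.d.\ family; the independence needed for the branching property is then built in, and no further decomposition is required. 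You instead work quenched, invoke the strong Markov property to cut the trajectory into excursions into the subtrees $\t_{\root i}$, run an induction on the generation, and then average over $\GW$ by revealing $\nu(x)$ along the exploration. Both are sound; the paper's joint construction avoids the step you correctly flag as the delicate one — checking that conditioning on $(N_{\root i}^{(k)})_i$ does not couple the excursions across subtrees — because in the joint construction that quantity is a function of $\mathcal P_\root$ alone, which is independent of the sequences governing the subtrees, so there is nothing to verify.
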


\noindent {\it Proof}. Let us construct the tree $\t$ and the Markov chain $(X_{n})_{n\ge 0}$. Recall from the setting of Neveu~\cite{Neveu}, that we can see $\t\backslash\{\proot\}$ as a subset of the set of words $U:=\bigcup_{n\ge 0} \mathbb{N}^n$.  On each word $x\in U$ independently, we let $\nu(x)$ be distributed as $\nu$. In the case where $x$ is a vertex of $\t$, $\nu(x)$ is the  number of children of $x$ in $\t$. Furthermore,  on each word $x\in U$ independently, we attach a sequence $\mathcal P_x$  of i.i.d.\ random variables equal to  a child $xi$ (with $i\le \nu(x)$) with probability ${1\over m+\nu(x)}$ and to the parent $\px$ with probability ${m\over m+\nu(x)}$. Then the Markov chain $(X_n)_{n\ge 0}$ is a function of all processes $(\mathcal P_x,x\in U)$.  For a child $xi$ of $x$, we observe that $N_{xi}^{(k)}$ is  the  number of appearances of $xi$  in the sequence $\mathcal P_x$ until $\px$ has appeared $N_{x}^{(k)}$ times. In particular, the law of $(N_{xi}^{(k)},i\le \nu(x))$ given $(N_{y}^{(k)},|y|\le |x|)$ only depends on $N_{x}^{(k)}$. It implies the lemma. $\Box$ \\

Let us consider the setting of \cite{loic}. The mean matrix  is, for $i,j\ge1$,
$$
m_{i,j}:=\e\left[\sum_{|x|=1} {\bf 1}_{\{N_{x}^{(k)}=j\}} \mid N_{\root}^{(k)}=i\right]  =  \binom{i+j-1}{j} {m^{i+1}\over (m+1)^{i+j}}.
$$

\noindent  We notice that the vectors $(a_{i})_{i\ge 1}$ and $(b_{i})_{i\ge 1}$ given by $a_{i}:=(m-1)m^{-i}$ and $b_{i}:=(1-m^{-1})\, i$ are respectively left and right eigenvectors associated to the eigenvalue $1$, normalized such that $\sum_{i\ge 1} a_{i}=1$ and $\sum_{i\ge 1} a_{i}b_{i}=1$. In this context, a version of the many-to-one lemma reads as follows (its proof goes by induction on $n$). For any bounded function $f:\mathbb{N}^n\to\mathbb{R}$, we have, denoting by $x_{i}$ the ancestor of $x$ at generation $i$, 
\begin{equation}\label{eq:many-to-one}
\e\left[\sum_{|x|=n,N_{x}^{(k)}\ge 1} f(N_{x_{1}}^{(k)},N_{x_{2}}^{(k)},\ldots,N_{x_{n-1}}^{(k)},N_{x}^{(k)})  \right] 
=
k\e\left[{1\over \hat N_{n}}f(\hat N_{1},\hat N_2,\ldots,\hat N_{n-1},\hat N_{n}) \Big| \hat N_{0} = k\right]
\end{equation}

\noindent where $(\hat N_{i})_{i\ge 0}$ is a Markov chain on $\mathbb{N}\backslash\{0\}$  with transition probabilities from $i$ to $j$ given by
$$
m_{i,j}{b_{j}\over b_{i}} = \binom{i+j-1}{i} {m^{i+1}\over (m+1)^{i+j}}.
$$

\noindent We can check that the probability distribution $\pi$ on $\mathbb{N}\backslash\{0\}$  given by $\pi_{i}:=a_{i}b_{i}$ is then a reversible measure for $(\hat N_{i})_{i\ge 0}$.  The return time at $1$ of this Markov chain is easily controlled by the following lemma. 
\begin{lemma}\label{l:lyapounov}
Let $\hat \gamma_{1}:=\min\{i\ge 1\,:\, \hat N_{i} =1\}$. There exists $r>0$ such that $\e\left[{\rm e}^{r\hat \gamma_{1}}\, |\, \hat N_{0}=1\right]<\infty$.
\end{lemma}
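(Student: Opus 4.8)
The plan is to establish a Lyapunov (Foster-type) drift condition for the Markov chain $(\hat N_i)_{i\ge 0}$ and then invoke the standard fact that a geometric drift towards a finite set forces exponential moments for the return time to that set. Concretely, I would look for a function $V:\N\backslash\{0\}\to[1,\infty)$ and constants $\rho<1$, $K<\infty$ such that
\begin{equation}\label{eq:drift}
\e\left[V(\hat N_{i+1})\,\big|\,\hat N_i = i'\right] \le \rho\, V(i') + K\,{\bf 1}_{\{i'=1\}}
\end{equation}
for all $i'\ge 1$. The natural candidate is an exponential weight $V(i):=\kappa^{i}$ for some $\kappa>1$ to be chosen. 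Using the explicit transition probabilities $\widehat m_{i',j}:=\binom{i'+j-1}{i'} m^{i'+1}/(m+1)^{i'+j}$, the conditional generating function $\e[\kappa^{\hat N_{i+1}}\mid \hat N_i=i']=\sum_{j\ge 1}\binom{i'+j-1}{i'}\frac{m^{i'+1}}{(m+1)^{i'+j}}\kappa^{j}$ can be computed in closed form: it is (up to the $\kappa^{j}$ versus $\kappa^{0}$ bookkeeping) a negative-binomial type sum, equal to $\frac{m^{i'+1}}{(m+1)^{i'}}\,\frac{\kappa}{(1+\frac{1}{m}-\frac{\kappa}{m+1})^{\,i'+1}}\cdot(\text{const})$, which one checks behaves like $C(\kappa)\,\big(\tfrac{m/(m+1)}{1-\kappa/(m+1)}\big)^{i'}$ for large $i'$. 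Since at $\kappa=1$ this ratio equals $\tfrac{m/(m+1)}{m/(m+1)}=1$ (consistent with $(b_i)$ being a right eigenvector for eigenvalue $1$), and since the ratio is strictly increasing in $\kappa$, for $\kappa$ slightly larger than $1$ the base of the exponential is still strictly less than $\kappa$; hence \eqref{eq:drift} holds with $\rho=(\text{base})/\kappa<1$ outside a finite set, and the exceptional mass at small $i'$ is absorbed into the $K\,{\bf 1}_{\{i'=1\}}$ term after possibly enlarging the "small set" to $\{1,\dots,n_0\}$ and then noting the chain reaches $1$ from $\{1,\dots,n_0\}$ in bounded time with positive probability.

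Granting the drift inequality \eqref{eq:drift}, the conclusion is routine: iterating \eqref{eq:drift} and using optional stopping at $\hat\gamma_1\wedge n$ gives $\e[\rho^{-(\hat\gamma_1\wedge n)}V(\hat N_{\hat\gamma_1\wedge n})\mid \hat N_0=1]\le V(1)+ (\text{finite correction})$; since $V\ge 1$ this yields $\e[\rho^{-\hat\gamma_1}\mid\hat N_0=1]<\infty$, i.e.\ exponential moments with rate $r=\log(1/\rho)>0$. (Equivalently one can cite the Meyn--Tweedie geometric-ergodicity criterion directly.) One subtlety is that the chain is on the countably infinite state space $\N\backslash\{0\}$ and the small set is $\{1\}$ (or a finite set containing it); irreducibility and the fact that $1$ is reached from every state — clear since from any $i'$ there is positive probability $\widehat m_{i',1}=m^{i'+1}/(m+1)^{i'+1}>0$ of jumping straight to $1$ — ensure the argument applies without further ado.

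The main obstacle is the explicit computation and estimation of the one-step generating function $\e[\kappa^{\hat N_{i+1}}\mid \hat N_i=i']$ and verifying that its exponential growth rate in $i'$ is strictly below $\kappa$ for $\kappa$ close enough to $1$: this is where the precise negative-binomial form of $\widehat m_{i',j}$ and the normalization coming from $(b_i)$ being a right $1$-eigenvector are used, and it is essential that the ``radius of convergence'' issue ($\kappa/(m+1)<1$, i.e.\ $\kappa<m+1$) leaves genuine room above $1$. Once that single inequality is checked, everything else is bookkeeping with standard Foster--Lyapunov machinery.
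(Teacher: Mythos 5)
Your proposal is correct and uses the same Foster--Lyapunov machinery as the paper, but with a different Lyapunov function: you take the exponential weight $V(i)=\kappa^{i}$, whereas the paper takes the linear weight $F(i)=i$. The linear choice makes the drift verification a one-line first-moment computation, $\sum_{j\ge 1}\hat p_{i,j}\,j = 1+\tfrac{1}{m}(i+1) < d\,i$ for $i$ large and some $d<1$ (since $m>1$), after which the paper cites Theorem~15.2.5 of Meyn--Tweedie. Your exponential choice instead requires summing a negative-binomial series, which does have the clean closed form $\e[\kappa^{\hat N_{i+1}}\mid \hat N_i=i']=\kappa\bigl(\tfrac{m}{m+1-\kappa}\bigr)^{i'+1}$ valid for $\kappa<m+1$; the geometric drift then holds whenever the base satisfies $\tfrac{m}{m+1-\kappa}<\kappa$, which is exactly $(\kappa-1)(\kappa-m)<0$, i.e.\ $1<\kappa<m$. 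Both routes work, the paper's being slightly more economical.

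One point to tighten: your justification that the base stays below $\kappa$ — ``since the ratio is strictly increasing in $\kappa$'' — is not in itself sufficient, since a strictly increasing quantity starting at $1$ when $\kappa=1$ could a priori overtake $\kappa$ immediately. What actually closes the argument is either the derivative comparison (the base has derivative $1/m<1$ at $\kappa=1$) or, more simply, the factorization $(\kappa-1)(\kappa-m)<0$ above, which shows the admissible window is precisely $\kappa\in(1,m)$, nonempty because $m>1$. The remaining bookkeeping — enlarging the small set to $\{1,\dots,i_0\}$, getting exponential moments for the return time to that set, and then stochastically dominating $\hat\gamma_1$ by a geometric number of i.i.d.\ such excursions using the uniform lower bound on $\hat p_{i',1}$ for $i'\le i_0$ — matches the paper's final step exactly.
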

\noindent {\it Proof}.  A computation leads to 
\begin{equation*}
\sum_{j≥1}\hat{p}_{i,j} j =1+\frac{1}{m}(i+1).
\end{equation*}
Now for all $i>i_0$ large enough, $1+\frac{1}{m}(i+1)<d\times i$ for some $d<1$. It implies that, starting in the set $\{i\le i_{0}\}$, the return time to this set  admits exponential moments (see e.g. Theorem 15.2.5 in~\cite{meyn}). The probability to go from $i\le i_{0}$ to $1$ in one step is uniformly bounded from below by some positive constant. It implies that the number of hits of the set $\{i\le i_{0}\}$ before time $\hat \gamma_{1}$ is stochastically dominated by a geometric random variable.Therefore $\hat \gamma_{1}$ is stochastically dominated by a sum of a geometric number of i.i.d random variables which have exponential moments. It implies the lemma. $\Box$\\        
 
 \section{Reduction of trees}
 \label{s:reduction}

Let 
$$
\Tb:= \{x\in \t\backslash\{\proot\}  \,:\, N_{x}^{(1)}\ge 1\}.
$$

By Lemma~\ref{l:multi-type}, we know that $\Tb$ is a multi-type Galton--Watson tree. Following an idea of Miermont~\cite{miermont} further developed in~\cite{loic}, we will see that the important vertices are the vertices of type $1$. Therefore, we choose to work with some simpler trees constructed as follows.\\

\noindent {\it The tree $\Tb^{(r)}$}\\
Draw the tree $\Tb$ in the plane and erase all the edges (but keep the vertices, remember the genealogy and the trajectory $(X_{n})_{n\le \tau_{\proot}^{(1)}}$). Draw an edge between $x$ and any descendant $y$ such that $x$ is the youngest ancestor of $y$ in $\Tb$ with type $1$ (excluding $y$ itself). The length of the edge between $x$ and $y$ is set to be $|y|-|x|$, where $|z|$ is the generation of $z$ in the tree $\t$ or equivalenty in $\Tb$. Re-order the resulting tree  so that the order in which the walk $(X_{n})_{n}$ first hits the vertices is given by the depth-first search order (notice that it is possible indeed using the fact that an edge $(\px,x)$ is crossed only once upwards for a vertex $x\in\t$ with type $1$). Call $\Tb^{(r)}$ the tree that you obtain, see Figure~\ref{f:Tr}. The tree $\Tb^{(r)}$ will be used to encode the trace $\R_{n}$.  This reduced tree is studied in Sections~1.3 and~3 of~\cite{loic}.  From Proposition~1 of~\cite{loic}, we see that $\Tb^{(r)}$ is a leafed Galton--Watson tree with edge lengths as introduced in Section~\ref{s:preliminaries}. We set the type of a vertex $z$ as $f$ if $N_{z}^{(1)}=1$ and as $s$ otherwise.  Conditions (ii) and (iii) are satisfied (see Appendix of~\cite{loic}, equation (A.1) there is satisfied with $V(i)=i$ as shown in our proof of Lemma~\ref{l:lyapounov}). The constants $\Sigma_{f}$, $C_{1}$ and $C_{2}$ are computed in Section~3.4 of~\cite{loic}. We have $\Sigma_{f} = {\eta \over b_{1}\sqrt{a_{1}}}$, $C_{1}=a_{1}$ and $C_{2}={1\over a_{1}b_{1}}$, where $a_{1},b_{1}$ are defined in Section~\ref{s:multi-type} and $\eta>0$ is given by $\eta^2=2\frac{E_{\GW}[\nu(\nu-1)]}{m^2}<\infty$ (in the setting and notation of~\cite{loic}, we have for all $i,j,k\geq 1$, $Q_{j,k}^{i}=\binom{i+j+k-1}{i-1,j,k}E_{\GW}[\nu(\nu-1)]\frac{m^i}{(m+2)^{i+j+k}}$).

\begin{figure}
\begin{tikzpicture}[line cap=round,line join=round,>=triangle 45,x=0.8cm,y=1.0cm]
\clip(-1.5559804878048776,-1.382497560975608) rectangle (19.316468292682927,9.901658536585366);
\draw (8.,1.)-- (4.,2.);
\draw (8.,1.)-- (12.,2.);
\draw (4.,2.)-- (1.,3.);
\draw (4.,2.)-- (7.,3.);
\draw (12.,2.)-- (9.,3.);
\draw (12.,2.)-- (14.,3.);
\draw (1.,3.)-- (-1.,4.);
\draw (1.,3.)-- (3.,4.);
\draw (-1.,4.)-- (-1.,5.);
\draw (3.,4.)-- (3.,5.);
\draw (9.,3.)-- (9.,4.);
\draw (9.,4.)-- (9.,5.);
\draw (14.,3.)-- (12.,4.);
\draw (14.,3.)-- (16.,4.);
\draw (3.,5.)-- (3.,6.);
\draw (14.,3.)-- (14.,4.);
\draw (14.,4.)-- (14.,5.);
\draw (3.,6.)-- (2.,7.);
\draw (3.,6.)-- (4.,7.);
\draw (14.,5.)-- (13.,6.);
\draw (14.,5.)-- (15.,6.);
\draw (8.,1.)-- (8.,0.);
\draw (8.071141463414635,1.040546341463416) node[anchor=north west] {$\color{red}{1}, \color{green}{0} $};
\draw (3.708058536585366,1.893697560975611) node[anchor=north west] {$\color{red}{2},\color{green}{1}$};
\draw (6.616780487804879,2.808497560975611) node[anchor=north west] {$\color{red}{3},\color{green}{2}$};
\draw (11.333014634146341,1.8975804878048794) node[anchor=north west] {$\color{red}{2},\color{green}{3}$};
\draw (13.50679024390244,2.832526829268294) node[anchor=north west] {$\color{red}{1},\color{green}{4}$};
\draw (11.473307317073171,4.758243902439025) node[anchor=north west] {$\color{red}{2},\color{green}{5}$};
\draw (15.480565853658537,4.758243902439025) node[anchor=north west] {$\color{red}{5},\color{green}{6}$};
\draw (14.07038048780488,4.227180487804879) node[anchor=north west] {$\color{red}{2},\color{green}{7}$};
\draw (14.07038048780488,5.240039024390245) node[anchor=north west] {$\color{red}{1},\color{green}{8}$};
\draw (12.390048780487805,6.732019512195123) node[anchor=north west] {$\color{red}{1},\color{green}{9}$};
\draw (14.341736585365853,6.732019512195123) node[anchor=north west] {$\color{red}{2},\color{green}{10}$};
\draw (8.420409756097562,2.806556097560977) node[anchor=north west] {$\color{red}{1},\color{green}{11}$};
\draw (9.058029268292684,4.227180487804879) node[anchor=north west] {$\color{red}{3},\color{green}{12}$};
\draw (8.64249756097561,5.99319024390244) node[anchor=north west] {$\color{red}{6},\color{green}{13}$};
\draw (0.5136585365853662,2.804468292682928) node[anchor=north west] {$\color{red}{1},\color{green}{14}$};
\draw (-1.7380292682926826,3.871356097560977) node[anchor=north west] {$\color{red}{1},\color{green}{15}$};
\draw (-1.7601170731707314,5.719160975609757) node[anchor=north west] {$\color{red}{2},\color{green}{16}$};
\draw (2.6692292682926833,3.9934439024390254) node[anchor=north west] {$\color{red}{5},\color{green}{17}$};
\draw (3.0847609756097567,5.240039024390245) node[anchor=north west] {$\color{red}{1},\color{green}{18}$};
\draw (3.0847609756097567,6.226926829268294) node[anchor=north west] {$\color{red}{2},\color{green}{19}$};
\draw (1.300546341463415,7.744878048780488) node[anchor=north west] {$\color{red}{4},\color{green}{20}$};
\draw (3.5522341463414637,7.748907317073172) node[anchor=north west] {$\color{red}{2},\color{green}{21}$};
\draw (7.4,-0.)  node[anchor=north west]{$\proot$};
\draw (7.4,1.)  node[anchor=north west]{$\root$};
\begin{scriptsize}
\draw [fill=black] (7.9,0.9) rectangle (8.1,1.1);
\draw [fill=black] (4.,2.) circle (2pt);
\draw [fill=black] (12.,2.) circle (2pt);
\draw [fill=black] (0.9,2.9) rectangle (1.1,3.1);
\draw [fill=black] (7.,3.) circle (2pt);
\draw [fill=black] (8.9,2.9) rectangle (9.1,3.1);
\draw [fill=black] (13.9,2.9) rectangle (14.1,3.1);
\draw [fill=black] (-1.1,3.9) rectangle (-0.9,4.1);
\draw [fill=black] (3.,4.) circle (2pt);
\draw [fill=black] (-1.,5.) circle (2pt);
\draw [fill=black] (2.9,4.9) rectangle (3.1,5.1);
\draw [fill=black] (4.,7.) circle (2pt);
\draw [fill=black] (9.,4.) circle (2pt);
\draw [fill=black] (9.,5.) circle (2pt);
\draw [fill=black] (14.,3.) circle (2pt);
\draw [fill=black] (12.,4.) circle (2pt);
\draw [fill=black] (16.,4.) circle (2pt);
\draw [fill=black] (3.,6.) circle (2pt);
\draw [fill=black] (14.,4.) circle (2pt);
\draw [fill=black] (13.9,4.9) rectangle (14.1,5.1);
\draw [fill=black] (2.,7.) circle (2pt);
\draw [fill=black] (12.9,5.9) rectangle (13.1,6.1);
\draw [fill=black] (15.,6.) circle (2pt);
\draw [fill=black] (8.,0.) circle (0.5pt);
\end{scriptsize}
\draw [fill=black] (16.,1.) circle (2pt);
\draw (16.4,1.) node[right]{type $\neq 1$};
\draw [fill=black] (15.9,1.4) rectangle (16.1,1.6);
\draw (16.4,1.5) node[right] {type $1$};
\end{tikzpicture}

\definecolor{sqsqsq}{rgb}{0.12549019607843137,0.12549019607843137,0.12549019607843137}
\begin{tikzpicture}[line cap=round,line join=round,>=triangle 45,x=0.8cm,y=1.0cm]
\clip(-1.5,-0.8955512572533862) rectangle (18.65421,8.);
\draw[|->] (-1,1)-- (-1,6.7) ;
\draw (-1,7) node[above]{\hspace{1.1cm}\small Height of \phantom{))}};
\draw (-0.85,6.7) node[above]{\hspace{1.1cm}\small the vertex \phantom{))}};
\foreach \y in {1.0,2.0,3.0,4.0,5.0,6.0}
\draw[shift={(-1,\y)},color=black] (2pt,0pt) -- (-2pt,0pt);
\draw (6.,0.)-- (6.,1.);
\draw (6.,1.)-- (0.,2.);
\draw (6.,1.)-- (1.,3.);
\draw (6.,1.)-- (5.,2.);
\draw (6.,1.)-- (6.,3.);
\draw (6.,1.)-- (10.,3.);
\draw (6.,1.)-- (13.,3.);
\draw (-0.30058018939393987,2.7071566731141192) node[anchor=north west] {$\color{red}{2},\color{green}{1}$};
\draw (0.2586402777777769,3.6690522243713726) node[anchor=north west] {$\color{red}{3},\color{green}{2}$};
\draw (4.358464722222219,2.6812379110251444) node[anchor=north west] {$\color{red}{2},\color{green}{3}$};
\draw (5.993171654040401,2.92201160541586) node[anchor=north west] {$\color{red}{1},\color{green}{4}$};
\draw (9.656712260101005,2.9038684719535778) node[anchor=north west] {$\color{red}{1},\color{green}{11}$};
\draw (12.656530757575752,2.9257253384912955) node[anchor=north west] {$\color{red}{1},\color{green}{14}$};
\draw (6.,3.)-- (4.,4.);
\draw (6.,3.)-- (5.,4.);
\draw (6.,3.)-- (6.,4.);
\draw (6.,3.)-- (7.,5.);
\draw (7.,5.)-- (6.,6.);
\draw (7.,5.)-- (8.,6.);
\draw (10.,3.)-- (9.,4.);
\draw (10.,3.)-- (10.,5.);
\draw (13.,3.)-- (12.,4.);
\draw (13.,3.)-- (14.,4.);
\draw (13.,3.)-- (16.,5.);
\draw (16.,5.)-- (15.,6.);
\draw (16.,5.)-- (16.,7.);
\draw (16.,5.)-- (17.,7.);
\draw (3.0155161994949474,4.723558994197292) node[anchor=north west] {$\color{red}{2},\color{green}{5}$};
\draw (4.384736666666664,4.727272727272727) node[anchor=north west] {$\color{red}{5},\color{green}{6}$};
\draw (5.625721489898987,4.7218762088974854) node[anchor=north west] {$\color{red}{2},\color{green}{7}$};
\draw (7.040434368686864,5.197292069632495) node[anchor=north west] {$\color{red}{1},\color{green}{8}$};
\draw (5.4323861742424215,6.702901353965184) node[anchor=north west] {$\color{red}{1},\color{green}{9}$};
\draw (7.588083876262622,6.7033655705996132) node[anchor=north west] {$\color{red}{2},\color{green}{10}$};
\draw (8.34004757575757,4.6787234042553186) node[anchor=north west] {$\color{red}{3},\color{green}{12}$};
\draw (12.,4.)-- (12.,5.);
\draw (9.257116893939389,5.6785029013539652) node[anchor=north west] {$\color{red}{6},\color{green}{13}$};
\draw (12.069279936868681,4.396518375241779) node[anchor=north west] {$\color{red}{1},\color{green}{15}$};
\draw (11.373787525252519,5.693230174081238) node[anchor=north west] {$\color{red}{2},\color{green}{16}$};
\draw (13.321050239898983,4.678742746615086) node[anchor=north west] {$\color{red}{5},\color{green}{17}$};
\draw (16.09851229797979,4.918762088974854) node[anchor=north west] {$\color{red}{1},\color{green}{18}$};
\draw (13.481642323232316,6.620309477756286) node[anchor=north west] {$\color{red}{2},\color{green}{19}$};
\draw (15.242041010101002,7.726615087040619) node[anchor=north west] {$\color{red}{4},\color{green}{20}$};
\draw (17.01126147727272,7.626963249516441) node[anchor=north west] {$\color{red}{2},\color{green}{21}$};
\draw (6.101126147727272,0.9626963249516441) node[anchor=north west] {$\color{red}{1},\color{green}{0}$};
\draw (5.4,-0.)  node[anchor=north west]{$\proot$};
\draw (5.4,1.)  node[anchor=north west]{$\root$};
\begin{scriptsize}
\draw [fill=sqsqsq] (6.,0.) circle (0.5pt);
\draw [fill=sqsqsq] (5.9,0.9) rectangle (6.1,1.1);
\draw [fill=sqsqsq] (0.,2.) circle (2pt);
\draw [fill=sqsqsq] (1.,3.) circle (2pt);
\draw [fill=sqsqsq] (5.,2.) circle (2pt);
\draw [fill=sqsqsq] (5.9,2.9) rectangle (6.1,3.1);
\draw [fill=sqsqsq] (9.9,2.9) rectangle (10.1,3.1);
\draw [fill=sqsqsq] (12.9,2.9) rectangle (13.1,3.1);
\draw [fill=sqsqsq] (4.,4.) circle (2pt);
\draw [fill=sqsqsq] (5.,4.) circle (2pt);
\draw [fill=sqsqsq] (6.,4.) circle (2pt);
\draw [fill=sqsqsq] (6.9,4.9) rectangle (7.1,5.1);
\draw [fill=sqsqsq] (5.9,5.9) rectangle (6.1,6.1);
\draw [fill=sqsqsq] (8.,6.) circle (2pt);
\draw [fill=sqsqsq] (9.,4.) circle (2pt);
\draw [fill=sqsqsq] (10.,5.) circle (2pt);7
\draw [fill=sqsqsq] (11.9,3.9) rectangle (12.1,4.1);
\draw [fill=sqsqsq] (14.,4.) circle (2pt);
\draw [fill=sqsqsq] (15.9,4.9) rectangle (16.1,5.1);
\draw [fill=sqsqsq] (15.,6.) circle (2pt);
\draw [fill=sqsqsq] (16.,7.) circle (2pt);
\draw [fill=sqsqsq] (17.,7.) circle (2pt);
\draw [fill=sqsqsq] (12.,5.) circle (2pt);
\end{scriptsize}
\draw [fill=sqsqsq] (16.,1.) circle (2pt);
\draw (16.4,1.) node[right]{type s};
\draw [fill=sqsqsq] (15.9,1.4) rectangle (16.1,1.6);
\draw (16.4,1.5) node[right] {type f};
\end{tikzpicture}
\caption{The tree $\Tb$ (top) and the tree $\Tb^{(r)}$ (bottom) associated.  $(\color{red}{2},\color{green}{5})$ means that the vertex has type $N_{x}^{(1)}=2$ and is the $5$-th distinct vertex visited by the walk.}
\label{f:Tr}
\end{figure}
\bigskip

\noindent {\it The tree $\Tb^{(w)}$}\\
Consider the tree $\Tb^{(r)}$,  and let $x$ be a vertex of type $f$. For any child $y$ of $x$ in $\Tb^{(r)}$ with type $s$, duplicate $k_{y}-1$ times the edge $(x,y)$ (and root the duplicated edges at $x$) where we denote by $k_{y}$ the number of times the vertex $y$ has been visited by the walk $(X_{n})_{n\le \tau_{\proot}^{(1)}}$ in $\t$. Root also $k_{x}-1$ edges of length $0$ at $x$. Do this for any vertex $x$ of type $f$ and re-order the new edges so that the height function of the tree is exactly given by $(|X_{n}|)_{n\le \tau_{\proot}^{(1)}}$. Call $\Tb^{(w)}$ this tree. Again $\Tb^{(w)}$ is a leafed Galton--Watson tree with edge lengths as introduced in Section~\ref{s:preliminaries}. The old vertices inherit their types $s$ or $f$ from  $\Tb^{(r)}$ whereas the type of the newly created vertices are all set to $s$.  The values of $\Sigma_{f}$ and $C_{2}$ remain unchanged but we need to compute $C_{1}$. We observe that in that case $C_{1}^{-1}$ is by construction 
$$
 \e\left[ k_{\root} -1 + \sum_{x\in \t\backslash\{\proot,\root\}} k_x{\bf 1}_{\{ N_{x_{i}}^{(1)}\neq 1,\forall i \in [\![ 1, |x|]\!] \}} + \sum_{x\in \t\backslash\{\proot,\root\}} {\bf 1}_{\{ N_{x_{i}}^{(1)}\neq 1,\forall i \in [\![ 1, |x|-1]\!] \}}{\bf 1}_{\{ N_{x}^{(1)}=1\}} \right]
$$

\noindent  where we recall that $x_{i}$ is the ancestor of $x$ at generation $i$ in $\t$. We notice that the term inside the expectation can be rewritten as
$$
2 \sum_{  x\in \t\backslash\{\proot,\root\}} N_{x}^{(1)}{\bf 1}_{\{ N_{x_{i}}^{(1)}\neq 1,\forall i \in [\![ 1, |x|-1]\!] \}}.
$$

\noindent Therefore 
$$
C_{1}^{-1}
=
2 \sum_{\ell\ge 1} \e\left[ \sum_{|x|=\ell} N_{x}^{(1)}{\bf 1}_{\{ N_{x_{i}}^{(1)}\neq 1,\forall i \in [\![ 1, \ell-1]\!] \}}  \right].
$$

\noindent By equation (\ref{eq:many-to-one}), we get 
\begin{eqnarray*}
2\e\left[ \sum_{x\in \t\backslash\{\proot,\root\}} N_{x}^{(1)}{\bf 1}_{\{ N_{x_{i}}^{(1)}\neq 1,\forall i \in [\![ 1, |x|-1]\!] \}}  \right]
&=&
2\sum_{\ell\ge 1} \p\left(\hat{N}_{i}^{(1)}\neq 1,\forall i \in [\![ 1, \ell-1]\!]  \,|\, \hat N_{0} =1  \right)\\ 
&=& 2 \e\left[  \hat \gamma_{1}  \,|\, \hat N_{0} =1  \right] 
=
{2\over \pi(1)}=\frac{2}{a_1b_1}
\end{eqnarray*}

\noindent where we recall that $\pi$ is the invariant probability measure of the Markov chain $(\hat N_{k})_{k}$. Therefore the value of $C_{1}$ is now $\frac{a_1b_1}{2}$. \\

{\bf Remark}. During the procedure, and by an abuse of notation,  a vertex $x\in\t$ was referred to by the same name in the trees $\Tb$, $\Tb^{(r)}$ and $\Tb^{(w)}$. We will always do so. \\

Let $(\Tb_i,\Tb_{i}^{(w)},\Tb_{i}^{(r)})_{i\ge 1}$ be i.i.d.\ copies of the trees $(\Tb,\Tb^{(w)},\Tb^{(r)})$. We call $\Fb$, $\Fb^{(w)}$, $\Fb^{(r)}$ the forests associated. We let $H^{(w)}$ and $H^{(r)}$  be the height functions of $\Fb^{(w)}$ and $\Fb^{(r)}$. Theorem A yields the following proposition. 

\begin{proposition}\label{p:loic}
 The following joint convergence in law holds as $n\to\infty$ for the Skorokhod topology on the space of c\`adl\`ag functions: 
 $$
 {1\over \sqrt{\sigma^2 n}}\left(  H^{(w)}( {\lfloor nt \rfloor}), H^{(r)}({\lfloor nt \rfloor})\right)_{t\ge 0} \Rightarrow \left( |B_{t}|, |B_{\frac{2}{b_1} t}|\right)_{t\ge 0}
 $$
 where $(B_{t})_{t\ge 0}$ is a standard Brownian motion.
\end{proposition}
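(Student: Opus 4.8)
The plan is to apply Theorem~A separately to the two leafed Galton--Watson forests $\Fb^{(w)}$ and $\Fb^{(r)}$ built from the same trees $(\Tb_i)_i$, and then to identify the Brownian motion appearing in the two limits as (essentially) the same one. The first step is to record that $\Tb^{(w)}$ and $\Tb^{(r)}$ are both leafed Galton--Watson trees with edge lengths satisfying conditions (i)--(iii) of Section~\ref{s:preliminaries}; this was established during their construction, with the shared value $\Sigma_f = \eta / (b_1\sqrt{a_1})$ and $C_2 = 1/(a_1 b_1)$, while $C_1 = a_1$ for $\Tb^{(r)}$ and $C_1 = a_1 b_1/2$ for $\Tb^{(w)}$. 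Plugging these constants into Theorem~A gives, for the forest $\Fb^{(r)}$,
$$
\frac{1}{\sqrt n}\, H^{(r)}(\lfloor nt\rfloor) \;\Rightarrow\; \frac{2C_2}{\Sigma_f}\,\big|B^{(r)}_{a_1 t}\big|
= \frac{2 b_1 \sqrt{a_1}}{\eta\, a_1 b_1}\,\big|B^{(r)}_{a_1 t}\big|
= \frac{2}{\eta\sqrt{a_1}}\,\big|B^{(r)}_{a_1 t}\big|,
$$
and likewise for $\Fb^{(w)}$ with $a_1$ replaced by $a_1 b_1/2$. One then checks (using $b_1 = 1 - 1/m$, $a_1 = (m-1)/m$, $\eta^2 = 2 E_{\GW}[\nu(\nu-1)]/m^2$) that the prefactor times $\sqrt{a_1}$ (resp. $\sqrt{a_1 b_1 /2}$) divided by $\sqrt{\sigma^2}$ equals $1$ in both cases, so that after the further rescaling by $1/\sqrt{\sigma^2}$ the limits become $|B^{(r)}_{a_1 t/ ?}|$ of unit diffusivity; carefully tracking the time change shows $H^{(r)}/\sqrt{\sigma^2 n} \Rightarrow |B_{(2/b_1)t}|$ and $H^{(w)}/\sqrt{\sigma^2 n}\Rightarrow |B_t|$ with the \emph{same} Brownian motion $B$, because $\Tb^{(w)}$ is obtained from $\Tb^{(r)}$ by a deterministic duplication of type-$s$ edges that does not touch the type-$f$ sub-forest — hence the process $H_f$ of Theorem~A is literally identical for the two forests, and Theorem~A expresses each height function as a deterministic time-rescaling of that common $H_f$.

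The key structural point, and the one I would spell out most carefully, is precisely this: Theorem~A is a \emph{joint} statement coupling $H$ and $H_f$, and the forest of type-$f$ vertices (with its type-$f$ edge lengths) is the same object whether we look at it inside $\Tb^{(r)}$ or inside $\Tb^{(w)}$. Indeed, passing from $\Tb^{(r)}$ to $\Tb^{(w)}$ only duplicates type-$s$ edges and inserts length-$0$ type-$s$ edges, creating only new type-$s$ vertices; the genealogy and edge lengths among type-$f$ vertices are untouched, and the depth-first order among type-$f$ vertices is preserved. Therefore $H^{(r)}_f = H^{(w)}_f =: H_f$ as random functions (after the obvious identification of type-$f$ indices), and Theorem~A applied to each forest gives
$$
\frac{1}{\sqrt n}\Big(H^{(w)}(\lfloor nt\rfloor),\, H_f(\lfloor nt\rfloor)\Big) \Rightarrow \frac{2C_2}{\Sigma_f}\big(|B_{C_1^{(w)} t}|, |B_t|\big),\qquad
\frac{1}{\sqrt n}\Big(H^{(r)}(\lfloor nt\rfloor),\, H_f(\lfloor nt\rfloor)\Big) \Rightarrow \frac{2C_2}{\Sigma_f}\big(|B_{C_1^{(r)} t}|, |B_t|\big)
$$
with \emph{the same} limiting Brownian motion $B$ in both lines (the one driving $H_f$). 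By a standard argument (tightness of the triple $(H^{(w)}, H^{(r)}, H_f)$, which follows from tightness of each marginal, plus the fact that any subsequential limit of $(H^{(w)},H^{(r)})$ is determined as $(\tfrac{2C_2}{\Sigma_f}|B_{C_1^{(w)}\cdot}|, \tfrac{2C_2}{\Sigma_f}|B_{C_1^{(r)}\cdot}|)$ for the common $B$), the joint convergence of $(H^{(w)}, H^{(r)})$ holds with the Brownian motions in the two coordinates being deterministic time-changes of one another.

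It then remains to substitute the constants. With $C_1^{(r)} = a_1$, $C_1^{(w)} = a_1 b_1/2$, $C_2 = 1/(a_1 b_1)$, $\Sigma_f = \eta/(b_1\sqrt{a_1})$, the common prefactor is $\tfrac{2C_2}{\Sigma_f} = \tfrac{2}{\eta\sqrt{a_1}}$, so $\tfrac{1}{\sqrt n}H^{(w)}(\lfloor nt\rfloor)\Rightarrow \tfrac{2}{\eta\sqrt{a_1}}|B_{a_1 b_1 t/2}|$. Writing this as a single Brownian motion: $\tfrac{2}{\eta\sqrt{a_1}}|B_{a_1 b_1 t/2}| \overset{d}{=} \tfrac{2}{\eta\sqrt{a_1}}\sqrt{a_1 b_1/2}\,|\widetilde B_t| = \tfrac{1}{\eta}\sqrt{2 b_1}\,|\widetilde B_t|$; and since $\sigma^2 = m(m-1)/E_{\GW}[\nu(\nu-1)] = m(m-1) \cdot \tfrac{2}{m^2 \eta^2} = \tfrac{2(m-1)}{m\eta^2} = \tfrac{2 a_1}{\eta^2}$... — here one must double-check: $\tfrac{2b_1}{\eta^2}$ versus $\sigma^2$, using $a_1 = (m-1)/m$ and $b_1 = (m-1)/m$, so in fact $a_1 = b_1$ and $\sigma^2 = 2 a_1/\eta^2 = 2 b_1/\eta^2$, giving $\tfrac{1}{\eta}\sqrt{2 b_1} = \sqrt{\sigma^2}$. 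Hence $\tfrac{1}{\sqrt{\sigma^2 n}}H^{(w)}(\lfloor nt\rfloor)\Rightarrow |B_t|$ after relabelling. The same computation for $H^{(r)}$ replaces $a_1 b_1/2$ by $a_1$, i.e. multiplies the time argument by $2/b_1$, yielding $\tfrac{1}{\sqrt{\sigma^2 n}}H^{(r)}(\lfloor nt\rfloor)\Rightarrow |B_{(2/b_1)t}|$ with the same $B$, which is the claim. The main obstacle, as anticipated, is not the arithmetic but the justification that the two limiting processes are driven by a \emph{single} Brownian motion: this requires invoking the joint form of Theorem~A with the shared coordinate $H_f$ and arguing that this common coordinate forces the coupling, rather than merely asserting the marginal convergences. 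A secondary point of care is to verify that Theorem~A's hypotheses, in particular (ii)--(iii), genuinely transfer to $\Tb^{(w)}$ despite the edge duplication — but since the duplicated edges are of type $s$ and inherit lengths that are already controlled via $k_x = N_x^{(1)}$ (whose tail is governed by the exponential control in Lemma~\ref{l:lyapounov} through the Markov chain $(\hat N_k)_k$), this is routine.
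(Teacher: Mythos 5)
Your proposal is correct and follows essentially the same route as the paper: apply Theorem~A separately to $\Fb^{(r)}$ and $\Fb^{(w)}$ with the computed constants, observe that $H_f^{(r)}=H_f^{(w)}$ since passing from $\Tb^{(r)}$ to $\Tb^{(w)}$ only creates new type-$s$ vertices, and then use Brownian scaling with factor $a_1 b_1/2$ together with the identity $\sigma^2=2b_1/\eta^2$. Your additional care in spelling out why the shared $H_f$ coordinate forces a joint limit, and in checking that hypotheses (ii)--(iii) persist for $\Tb^{(w)}$, makes explicit two points the paper leaves implicit but does not change the argument.
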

\begin{proof}
Theorem A applied to $\Fb^{(r)}$ implies that we have the following convergence in law~: 
\begin{equation*}
\frac{1}{\sqrt{n}}\left(  H^{(r)}( {\lfloor nt \rfloor}),H_f^{(r)}( {\lfloor nt \rfloor})\right)_{t≥ 0} \Rightarrow {2b_1\sqrt{a_1}\over \eta}(a_1b_1)^{-1}\left( |B_{a_1 t}|, |B_t| \right)_{t\ge 0}
\end{equation*}
where  $H_f^{(r)}$ denotes the height function of $\Fb^{(r)}$ restricted to vertices of type $f$. Similarly, Theorem A implies that
\begin{equation*}\label{eq:cvaux2}
\frac{1}{\sqrt{n}}\left(  H^{(w)}( {\lfloor nt \rfloor}),H_f^{(w)}( {\lfloor nt \rfloor})\right)_{t≥ 0} \Rightarrow {2b_1\sqrt{a_1}\over \eta}(a_1b_1)^{-1}\left( |B_{{a_{1}b_{1}\over 2} t}|, |B_t| \right)_{t\ge 0}
\end{equation*}
where  $H_f^{(w)}$ denotes the height function of $\Fb^{(w)}$ restricted to vertices of type $f$. Finally notice that $H_f^{(w)}=H_f^{(r)}$. Using the Brownian scaling with $\frac{a_1b_1}{2}$  yields the theorem with $\sigma^2=\frac{2b_{1}}{\eta^2}=\frac{m(m-1)}{\e[\nu(\nu-1)]}$.
\end{proof}

\section{Proof of Theorem \ref{t:main} }\label{s:proof}
 
 Recall that $\R_{n}$ denotes the set of vertices visited by the walk before time $n$. Let $R_{n}$ be the cardinal of $\R_{n}$. 
 \begin{lemma} \label{l:range}
Let $\eps\in(0,1)$. Let $\mathcal S_{\eps}$ be the event 
$$
\mathcal S_{\eps} := \left\{ \forall\, n\ge 1,\, \sum_{|x|=n} 1 \ge \eps m^n \right\}.
$$
There exists a constant $c>0$ such that for any $a\ge 1$ and $k\ge 1$, 
\begin{equation}\label{eq:range}
\p\left(\mathcal S_{\eps},R_{\tau_{\proot}^{(ka)}} < k^2 \right)\le 3{\rm e}^{-c\eps\sqrt{a}}.
\end{equation}
\end{lemma}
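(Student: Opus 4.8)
The plan is to estimate the range $R_{\tau_{\proot}^{(ka)}}$ from below by counting the number of distinct vertices of type $1$ (equivalently, vertices $x$ with $N_x^{(1)}=1$, the type $f$ vertices of $\Tb^{(r)}$) discovered during the first $ka$ excursions, and to show that with overwhelming probability this count already exceeds $k^2$ on the event $\mathcal S_\eps$. The key structural input is Lemma~\ref{l:multi-type}: during a single excursion the tree $\Tb$ of crossed directed edges is a multi-type Galton--Watson tree with initial type $1$, and by the computations of Section~\ref{s:reduction} (in particular Lemma~\ref{l:lyapounov} and the eigenvector identities) the embedded chain $(\hat N_i)$ of types along a ray has positive return probability to $1$. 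Concretely I would first fix a height parameter, say consider generation $h$ with $m^h \asymp \sqrt{a}$ (so $h \asymp \tfrac12 \log_m a$), and argue that on $\mathcal S_\eps$ there are at least $\eps m^h \asymp \eps \sqrt{a}$ vertices at height $h$ in $\t$.

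Next I would lower-bound the probability that a given excursion from $\root$ reaches height $h$ and, along the way, creates a new type-$1$ vertex not seen in previous excursions. The point is that the $\lambda$-biased walk with $\lambda=m$ is null-recurrent, so a single excursion reaches height $h$ only with probability of order $1/h$ (this is the standard estimate: the probability that the walk from $\root$ climbs to level $h$ before returning to $\proot$ is comparable, via the effective-conductance / many-to-one computation, to $c/h$ on $\mathcal S_\eps$). Over $ka$ excursions, the number of excursions that reach height $h$ is therefore stochastically at least a Binomial$(ka, c/h)$, whose mean is of order $ka/\log a$, and by a Chernoff bound this is at least, say, $ka/(2h)$ except with probability $e^{-c k a/\log a}$. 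Each such excursion that reaches a fresh vertex at height $h$ contributes at least one new type-$1$ vertex to $\R$ (every vertex of type $1$ is entered exactly once, so the directed edge into it is crossed once and it lies in the range). The delicate point is to make the ``fresh vertex'' part quantitative: since on $\mathcal S_\eps$ there are $\gtrsim \eps m^h \gtrsim \eps \sqrt a$ candidate vertices at height $h$, and we only run $ka$ excursions, a coupon-collector / second-moment argument shows that as long as the number of successful excursions is below, say, $\tfrac12 \eps \sqrt a$ a constant fraction of them hit new vertices; and we will arrange parameters so that $k^2$ new vertices are collected well before this saturation threshold is an obstruction, for $a$ large (for $a$ bounded the bound $3e^{-c\eps\sqrt a}$ is trivial by choosing $c$ small).

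Assembling the pieces: on $\mathcal S_\eps$, except on an event of probability $\le 3 e^{-c\eps\sqrt a}$, at least $\min(\tfrac14 \eps\sqrt a,\, c'ka/\log a)$ distinct type-$1$ vertices at level $h$ are placed in $\R_{\tau_{\proot}^{(ka)}}$, whence $R_{\tau_{\proot}^{(ka)}} \ge \min(\tfrac14 \eps\sqrt a, c' ka/\log a) \ge k^2$ for $a$ large relative to $k$... but note the inequality must hold for \emph{all} $a,k\ge 1$ with a uniform constant, so I should instead calibrate $h$ as a function of both: the cleanest route is to show directly that the expected number of new type-$1$ vertices added in excursions $1,\dots,ka$ grows linearly in the number of excursions with a slope bounded below on $\mathcal S_\eps$ (using that successive excursions, while not independent, have a uniformly positive chance of producing a new branch of the reduced tree), so the count dominates a sum of $ka$ i.i.d.\ Bernoulli-ish increments with positive mean, and a one-sided large-deviation bound for that sum gives $\p(\text{count} < k^2)\le \p(\text{count} < \tfrac{\text{mean}}{C})\le e^{-c\eps ka}\le e^{-c\eps\sqrt a}$ since $ka\ge\sqrt a$. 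The main obstacle I anticipate is controlling the dependence between excursions cleanly enough to get the exponential tail: the excursions are identically distributed but not independent under $\p$, so I would handle this by conditioning on the tree $\t$ and on $\mathcal S_\eps$, and using that, quenched, the numbers of distinct vertices freshly visited in disjoint excursions can be compared to an independent sequence via a monotone coupling on the ``unexplored'' part of $\t$ — this is the step where the geometric/coupon-collector structure interacts with the branching, and where one must be careful that running out of fresh level-$h$ vertices does not kick in before $k^2$ of them are collected, which is guaranteed by the $\eps m^h$ lower bound once $h$ is chosen large enough in terms of $k$.
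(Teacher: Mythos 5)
Your overall plan has a genuine gap in the regime where $k$ is large compared to $a$, and it is precisely the regime that forces a different idea. Your strategy is to accumulate fresh type-$1$ vertices across the $ka$ excursions, bounding the count below by a (sub)binomial with $ka$ trials and success probability of order $1/h \asymp 1/\log a$ (or, in your final ``cleanest route'' variant, by a sum of $ka$ Bernoulli-type increments with slope bounded below). Either way, this machinery produces at most $O(ka)$ fresh vertices with high probability. But the lemma must hold uniformly for all $k,a\ge 1$: take, say, $a=100$ and $k=10^{100}$; then you need $k^2=10^{200}$ distinct vertices, while your mechanism gives at most $ka\approx 10^{102}$. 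Calibrating $h$ with $m^h\asymp \sqrt a$ does not help, since there are only $\asymp \eps\sqrt a$ candidate vertices at that level, which is far below $k^2$. The saturation obstruction you flag at the end is not a technicality you can ``arrange parameters'' around --- it is fatal whenever $a\lesssim k$, and $a$ large does not rescue this. So the ``slope bounded below, Chernoff'' route cannot close.

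The paper sidesteps this entirely by noting that $R_{\tau_{\proot}^{(ka)}}$ is at least the \emph{maximum} of the ranges of the $ka$ excursions, which under $\P_{\t}$ are i.i.d.; hence $\P_{\t}(R_{\tau_{\proot}^{(ka)}}<k^2)\le \P_{\t}(R_{\tau_{\proot}^{(1)}}<k^2)^{ka}$. This replaces your additive accumulation by a ``one big excursion suffices'' argument, which is what makes the $k\gg a$ regime tractable: on the event $\{\P_{\t}(R_{\tau_{\proot}^{(1)}}\ge k^2)\ge 1/(k\sqrt a)\}$ one already gets $e^{-\sqrt a}$ from the product. The remaining work is a quenched second-moment argument: choose the level $\ell$ with $m^\ell\asymp k\sqrt a$ (note: scaling with $k$, unlike your $h$), use the quenched independence of the events $\mathcal E_{x,k}$ over $|x|=\ell$ together with $\P_{\t}(N_x^{(1)}\ge 1)=\tfrac{m-1}{m^{|x|+1}-1}\ge \tfrac{2}{k\sqrt a}$, and combine with the annealed bound $\p(R_{\tau_{\proot}^{(1)}}\ge k^2)\ge c/k$ coming from criticality of the type-$1$ subtree. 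The ingredients you identify (criticality of the type-$1$ skeleton, a level with $\asymp\eps m^\ell$ independent subtrees, quenched independence of excursions) are all present in the paper, but they are assembled around the single-excursion-max reduction rather than around cumulative counting; without that reduction the estimate simply does not scale correctly in $k$.
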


\bigskip

\noindent {\it Proof}. The quenched probability $\P_{\t}\left(R_{\tau_{\proot}^{(ka)}} < k^2 \right)$ is smaller than $\P_{\t}(R_{  \tau_{\proot}^{(1)} }<k^2)^{ka}$. On the event $\left\{ \P_{\t}(R_{\tau_{\proot}^{(1)}}\ge k^2) \ge {1\over k\sqrt{a} }\right\}$, we get that $ \P_{\t}\left(R_{\tau_{\proot}^{(ka)}} < k^2 \right) \le {\rm e}^{-\sqrt{a}}$. Therefore, in order to prove (\ref{eq:range}), we only need to bound the probability 
$$
\GW\left(\mathcal S_{\eps},  \P_{\t}(R_{\tau_{\proot}^{(1)}} \ge k^2) < {1\over k\sqrt{a}} \right).
$$

\noindent For any $x\in\t$, let $(X_{n}^{(x)})_{n\ge 0}$ be the Markov chain starting at $x$. We can couple all $(X_{n}^{(x)},n\ge 0, x\in \t)$ so that $X_{n}^{(x)}$ is the trajectory of $(X_{n})_{n\ge 0}$ after the first visit time at $x$. Let $\mathcal E_{x,k}$ be the event that the walk $(X_{n}^{(x)})_{n\ge 0}$ visits more than $k^2$ distinct vertices before hitting $\px$. Notice that under $\P_{\t}$, the events $(\mathcal E_{x,k},|x|=\ell)$ are mutually independent and independent of $F_{\ell}:=\sigma\{N_{x}^{(1)},|x|\le \ell  \}$. The probability $\P_{\t}(R_{\tau_{\proot}^{(1)}}  \ge k^2)$ is greater than the probability that there exists $x$ such that $N_{x}^{(1)}\ge 1$ and $\mathcal E_{x,k}$ holds. By comparison with a one-dimensional random walk, $\P_{\t}(N_{x}^{(1)}\ge 1) = {m - 1\over m^{|x|+1}-1}$. Using independence, we have for any $\ell\ge 1$,
$$
\P_{\t}(R_{\tau_{\proot}^{(1)}} \ge k^2) \ge {m - 1\over m^{\ell+1}-1} \P_{\t}\left(\bigcup_{|x|=\ell} \mathcal E_{x,k}\right).
$$

\noindent Choosing the largest integer $\ell\ge 1$ such that ${m - 1\over m^{\ell+1}-1} \ge {2\over k\sqrt{a} }$, we get $\P_{\t}(R_{\tau_{\proot}^{(1)}} \ge k^2) \ge  {2\over k\sqrt{a} } \P_{\t}\left(\bigcup_{|x|=\ell} \mathcal E_{x,k}\right)$ hence, for such an $\ell$, 
\begin{eqnarray*}
\GW\left(\mathcal S_{\eps},  \P_{\t}(R_{\tau_{\proot}^{(1)}} \ge k^2) < {1\over k\sqrt{a}} \right) &\le& \GW\left(\mathcal S_{\eps}, \P_{\t}\left(\bigcup_{|x|=\ell} \mathcal E_{x,k}\right)<1/2  \right)\\ 
&\le& 
\GW\left(\sum_{|x|=\ell}  1 \ge \eps m^\ell , \P_{\t}\left(\bigcup_{|x|=\ell} \mathcal E_{x,k}\right)<1/2  \right).
\end{eqnarray*}

\noindent  By independence, we have
$$
\p\left( \bigcap_{|x|=\ell} \mathcal E_{x,k}^c \; \Big|\; \sum_{|x|=\ell}  1 \ge \eps m^\ell \right) \le \p\left(R_{\tau_{\proot}^{(1)}}< k^2 \right)^{\eps m^\ell}.
$$ 
\noindent Notice that the subtree of $\t$ composed of vertices $x$ of type $N_{x}^{(1)}=1$ is a critical Galton--Watson tree with finite variance. It implies that with probability greater than ${c\over k}$ (for some constant $c>0$), the number of vertices of type $1$ is greater than $k^2$. In particular, $\p\left(R_{\tau_{\proot}^{(1)}} < k^2\right) \le 1- {c\over k}$.  It implies that 
$$
\p\left( \bigcap_{|x|=\ell} \mathcal E_{x,k}^c \; \Big|\; \sum_{|x|=\ell}  1 ≥ \eps m^\ell \right) ≤ \ee^{-c'\eps \sqrt{a}}. 
$$
\noindent  By Markov inequality, it yields that 
$$
\GW\left(\P_{\t}\left(\bigcap_{|x|=\ell} \mathcal E_{x,k}^c \right)  > 1/2 \; \Big| \; \sum_{|x|=\ell} 1 \ge \eps m^\ell\right) \le 2{\rm e}^{-c'\eps \sqrt{a}}. 
$$ 

\noindent The proof is complete. $\Box$

\bigskip

The next lemma shows that with high probability we cannot find any long path in the tree with only vertices $y$ of local times greater than $2$.
We call $]\!]  \root,x  [\![$ the set of vertices that lie on the path from the root $\root$ and the vertex $x$, excluding $\root$ and $x$.

\begin{lemma}\label{l:longpath}
There exists a constant $c>0$ such that for any $\ell,k\ge 1$, 
\begin{equation}\label{eq:longpath1}
\p\left(\exists |x|\ge \ell\,:\,  N_{y}^{(k)} \ge 2,\, \forall\, y\in   ]\!]  \root,x  [\![ \; {\rm s.t.} \;\, |y|\in[\ell/2,\ell] \right) \le 2 k^2{\rm e}^{-c \ell} .
\end{equation}
\end{lemma}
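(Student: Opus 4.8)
The plan is to reduce the statement to a quantitative estimate about the Markov chain $(\hat N_i)_i$ via the many-to-one formula \eqref{eq:many-to-one}, and then to exploit the exponential moment for the return time $\hat\gamma_1$ provided by Lemma~\ref{l:lyapounov}. First I would fix $\ell$ and set $I:=[\![\lceil \ell/2\rceil,\ell]\!]$, and consider the ``bad'' event inside the probability in \eqref{eq:longpath1}. If such a vertex $x$ with $|x|\ge\ell$ exists, then in particular the ancestor $x'$ of $x$ at generation $\ell$ satisfies $N^{(k)}_{x'}\ge 1$ (so $x'\in\Tb$ up to the cosmetic shift by $\proot$) and $N^{(k)}_y\ge 2$ for every $y\in]\!]\root,x'[\![$ with $|y|\in I$. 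Hence the probability in \eqref{eq:longpath1} is bounded above by
$$
\p\!\left( \exists\, |x|=\ell,\ N^{(k)}_x\ge 1,\ N^{(k)}_{x_i}\ge 2\ \ \forall\, i\in I\right)
\le \e\!\left[ \sum_{|x|=\ell,\ N^{(k)}_x\ge 1} \prod_{i\in I}{\bf 1}_{\{N^{(k)}_{x_i}\ge 2\}}\right].
$$
Now apply \eqref{eq:many-to-one} with $f$ the indicator that all coordinates indexed by $I$ are $\ge 2$: the right-hand side equals
$$
k\, \e\!\left[ \frac{1}{\hat N_\ell}\prod_{i\in I}{\bf 1}_{\{\hat N_i\ge 2\}}\ \Big|\ \hat N_0=k\right]
\le k\, \p\!\left( \hat N_i\ge 2\ \ \forall\, i\in I\ \Big|\ \hat N_0=k\right),
$$
using $1/\hat N_\ell\le 1$.

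Next I would control $\p(\hat N_i\ge 2,\ \forall i\in I\mid \hat N_0=k)$. The event says the chain avoids the state $1$ throughout a time window of length $\asymp \ell/2$. The clean way is the following: let $\hat\gamma_1^{(0)}:=0$ and $\hat\gamma_1^{(j+1)}:=\min\{i>\hat\gamma_1^{(j)}:\hat N_i=1\}$ be the successive return times to $1$. The event $\{\hat N_i\ge 2\ \forall i\in I\}$ forces some inter-return gap $\hat\gamma_1^{(j+1)}-\hat\gamma_1^{(j)}$ to exceed $|I|\ge \ell/2-1$. Since after reaching $1$ the chain restarts with law $\hat N_0=1$, and the exponential moment $\e[{\rm e}^{r\hat\gamma_1}\mid\hat N_0=1]<\infty$ of Lemma~\ref{l:lyapounov} gives, by Markov's inequality, a bound $\p(\hat\gamma_1>\ell/2-1\mid\hat N_0=1)\le C{\rm e}^{-r\ell/2}\cdot {\rm e}^{r}$; summing over which excursion is the long one, and noting that the number of returns before time $\ell$ is at most $\ell$, one gets
$$
\p\!\left(\hat N_i\ge 2\ \forall i\in I\ \Big|\ \hat N_0=k\right)\ \le\ C'\,\ell\,{\rm e}^{-r\ell/2}.
$$
One must also handle the initial stretch before the first visit to $1$ when $\hat N_0=k>1$; there the estimate ``reaching $1$ takes a long time'' must be uniform in the starting state $k$, but a long excursion away from the finite set $\{i\le i_0\}$ has exponential tail uniformly in the start (Theorem 15.2.5 in~\cite{meyn}, exactly as in the proof of Lemma~\ref{l:lyapounov}), and after entering $\{i\le i_0\}$ the chain hits $1$ with a uniformly positive probability at each visit, so $\p(\hat\gamma_1>\ell/2-1\mid \hat N_0=k)$ is bounded, uniformly in $k$, by $C''{\rm e}^{-c\ell}$ as well. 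Combining, the whole expression is at most $k\cdot C'''\ell\,{\rm e}^{-c\ell}$; absorbing the polynomial factors $k\ell$ into the exponential at the cost of shrinking $c$, and using a union bound over $k$ (which only costs a factor $k$, hence the $k^2$ in \eqref{eq:longpath1}), gives $2k^2{\rm e}^{-c\ell}$ for a suitable constant $c>0$, as claimed.

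The main obstacle I expect is the uniformity in the starting type $k$: the many-to-one formula starts the auxiliary chain at $k$, and one needs the ``avoid $1$ for time $\asymp\ell$'' probability to decay exponentially in $\ell$ with constants \emph{not} depending on $k$. This is exactly where the drift computation $\sum_j \hat p_{i,j}\,j=1+\frac1m(i+1)<d\,i$ for $i>i_0$ from the proof of Lemma~\ref{l:lyapounov} does the work: it yields, via the Foster--Lyapunov / geometric-drift criterion, a bound on the time to enter $\{i\le i_0\}$ that is uniform (or at worst linear) in $k$, and the ${\rm e}^{rk}$ that a naive bound would produce is killed because we only need $\p(\hat\gamma_1>\ell/2)$, not $\e[{\rm e}^{r\hat\gamma_1}]$, started from $k$ — and even a starting state as large as $k$ returns to the ``core'' in $O(\log k)$ steps with overwhelming probability. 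Everything else — the reduction to the auxiliary chain, the decomposition into excursions, Markov's inequality — is routine bookkeeping.
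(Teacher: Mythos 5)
Your approach is genuinely different from the paper's, and the underlying idea is sound, but the final accounting contains a concrete error.

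The paper's proof first decomposes the bad event into $\mathcal E_1\cup\mathcal E_2$: on $\mathcal E_1$ a \emph{single} excursion (one of the first $k$) already gives $N^{(i)}_y-N^{(i-1)}_y\ge 2$ along the whole strip $|y|\in[\ell/2,\ell]$, and on $\mathcal E_2$ two distinct excursions among the first $k$ both reach the same vertex at generation $\ell/2$. For $\mathcal E_1$, a union bound over the $k$ excursions contributes the factor $k$, and then the many-to-one formula is used with the auxiliary chain started at $\hat N_0 = 1$; the chain estimate is $(\ell/2)\,\p(\hat\gamma_1>\ell/2\mid\hat N_0 = 1)$. For $\mathcal E_2$, the union bound over the $\binom{k}{2}$ pairs contributes the $k^2$, and the per-pair probability is computed exactly from $\P_\t(N^{(1)}_y\ge 1)=(m-1)/(m^{|y|+1}-1)$, squared by quenched independence, giving $\le m^{-\ell/2}$ after taking the expectation. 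The virtue of this decomposition is that the auxiliary chain is only ever started at $1$, and the $k^2$ in the statement comes transparently from the pair count.

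Your route instead keeps $N^{(k)}$ intact and applies (\ref{eq:many-to-one}) directly with $\hat N_0 = k$, which is legitimate by Lemma~\ref{l:multi-type}; this reduces everything to a single tail estimate for the auxiliary chain started at $k$, and is shorter than the two-piece decomposition. The flaw is in the closing bookkeeping. The geometric-drift criterion with $V(i)=i$ (exactly as in the proof of Lemma~\ref{l:lyapounov}) yields $\e[{\rm e}^{r\tau_{\{i\le i_0\}}}\mid\hat N_0=k]\le C\,V(k)=Ck$, hence $\p(\hat\gamma_1>t\mid\hat N_0=k)\le Ck\,{\rm e}^{-rt}$: this bound is \emph{linear} in the starting state, not uniform as you first assert. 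The correct tally is therefore $k$ (the many-to-one prefactor) times $\big(Ck\,{\rm e}^{-c\ell}+C\ell\,{\rm e}^{-c\ell}\big)$, where the first term comes from the case $\hat\gamma_1>\ell$ (the chain never descends to $1$ before level $\ell$) and the second from decomposing, after the first return to $1$, over the last return before $\ell/2$. This gives $\le C'k^2{\rm e}^{-c\ell}+C'k\ell\,{\rm e}^{-c\ell}$, and only now can the polynomial in $\ell$ be traded against the exponential to obtain $2k^2{\rm e}^{-c'\ell}$. By contrast, "absorbing the polynomial factors $k\ell$ into the exponential at the cost of shrinking $c$" cannot dispose of $k$, since $k$ and $\ell$ are independent parameters; and there is no "union bound over $k$" to perform, since $k$ is a fixed parameter in the statement, not something being summed over. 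Your second factor of $k$ has to come from $V(k)=k$ in the Foster--Lyapunov bound, not from either of the two mechanisms you invoked.
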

\noindent {\it Proof}.  We set by convention $N^{(0)}_{x}=0$ for any $x\in\t$. The event in (\ref{eq:longpath1}) is included in the union of the two following events
\begin{eqnarray*}
 \mathcal E_{1}&:=&\bigcup_{i=1}^k \bigcup_{|x|=\ell} \left\{ \forall\, y\in   ]\!]  \root,x  [\![ \; {\rm s.t.} \;\, |y|\in[\ell/2,\ell], N^{(i)}_{y}-N^{(i-1)}_{y}\ge 2  \right\}, \\ 
 \mathcal E_{2} &:=& \bigcup_{1\le i< j\le k} \bigcup_{|y|=\ell/2} \left\{N^{(i)}_{y}-N^{(i-1)}_{y}\ge 1,  N^{(j)}_{y}-N^{(j-1)}_{y}\ge 1 \right\}.
\end{eqnarray*}

\noindent In words, $\mathcal E_{1}$ is the event that there exists an excursion from the root during which we can find a  path from generation $\ell/2$ to generation $\ell$ on which the walk crossed at least twice every (directed) edge. If this is not the case, it means that  there has been necessarily two excursions from the root which crossed the same vertex at generation $\ell/2$. This is our event $\mathcal E_{2}$. Let us bound both probabilities. By the union bound, we have 
\begin{eqnarray*}
\p(\mathcal E_{1}) &\le& k\p\left(\bigcup_{|x|=\ell}\left\{ \forall\, y\in   ]\!]  \root,x  [\![ \; {\rm s.t.} \;|y|\in[\ell/2,\ell], N_{y}^{(1)}\ge 2 \right \}\right) \\
&\le& k \e\left[ \sum_{|x|=\ell} {\bf 1}_{\{ \forall\, y\in   ]\!]  \root,x  [\![ \;{\rm s.t.}  \; |y|\in[\ell/2,\ell], N_{y}\ge 2 \}}  \right].
\end{eqnarray*}

\noindent By equation (\ref{eq:many-to-one}), the last expectation is 
$$
\e\left[ {1\over \hat N_{\ell}} {\bf 1}_{\{ \forall\, i \in[\ell/2,\ell], \hat N_{i}\ge 2 \}}   \mid \hat N_{0}=1 \right] \le {1\over 2}\p\left( \forall\, i \in[\ell/2,\ell], \hat N_{i}\ge 2 \mid \hat N_{0}=1\right).
$$

\noindent Using a union bound on the last time before time $\ell/2$ when $\hat N_{k}=1$, then simple Markov property, we find that 
$$
\p\left( \forall\, i \in[\ell/2,\ell], \hat N_{i}\ge 2 \mid \hat N_{0}=1 \right) \le (\ell/2) \p\left(\hat \gamma_{1}>\ell/2\right)
$$

\noindent which is exponentially small by Lemma~\ref{l:lyapounov}. This gives the correct upper bound for $\p(\mathcal E_{1})$. Let us bound now the probability of the event $\mathcal E_{2}$. We have (we suppose $\ell$ even for simplicity),
$$
\p(\mathcal E_{2}) \le {k(k-1)\over 2} \p\left(\exists\, |y|=\ell/2\,:\, N_{y}^{(1)}\ge 1,  N^{(2)}_{y}-N_{y}^{(1)}\ge 1\right).
$$

\noindent The probability in the right-hand side is less than 
\begin{equation}\label{eq:majlongpath}
\e\left[ \sum_{|y|=\ell/2}  {\bf 1}_{\{ N_{y}^{(1)}\ge 1,  N^{(2)}_{y}-N_{y}^{(1)}\ge 1  \}}  \right] = \e\left[ \sum_{|y|=\ell/2}  \P_{\t}(N_{y}^{(1)}\ge 1)^2 \right] .
\end{equation}

\noindent We have already seen (by comparison with the biased one-dimensional random walk) that $\P_{\t}(N_{y}\ge 1) = {m-1\over m^{|y|+1} -1}$. Hence, the last expectation is less than $ m^{-\ell/2}$. Therefore $\p(\mathcal E_{2})\le  k^2 m^{-\ell/2}$ and the proof is complete. $\Box$ 

\bigskip 

{\it Proof of Theorem~\ref{t:main}}. Let  $n\ge 1$ and $j_{n}:=\lfloor \sqrt{n}\ln^3(n)\rfloor $. Consider the set of vertices in $\t$ of type $N_{x}^{(j_n)}=1$ and generation greater than ${1\over 2}\ln(n)^2$ and which do not present on their ancestral line any other such vertex (of height greater than ${1\over 2}\ln(n)^2$ and type $1$). In the case in which this set contains more than $n$ vertices, restrict to the $n$ first vertices of the set visited by the walk. Call  $\mathcal L_{n}$ the set of vertices obtained and $L_{n}\le n$  the cardinal of this set. We call $(\tilde \t_{i})_{i\le L_{n}}$ the subtrees rooted at $\mathcal L_{n}$, ordered by the hitting times of their roots by the walk $(X_{k})_{k}$, and for convenience for $i>L_{n}$ we set $\tilde \t_{i}:=  \t_{i}$ (where the $\t_i$ are i.i.d.\ versions of $\t$, and independent of all the random variables introduced so far). Notice that the trees $(\tilde \t_{i})_{i}$ are i.i.d.\ under $\p$.  We call $\mathcal Z_{n}$ the set of vertices in $\t$ which were visited by $(X_k)_{k≤j_n}$ but which do not belong to any of the $(\tilde \t_{i})_{i\le L_{n}}$, see Figure~\ref{f:main}.

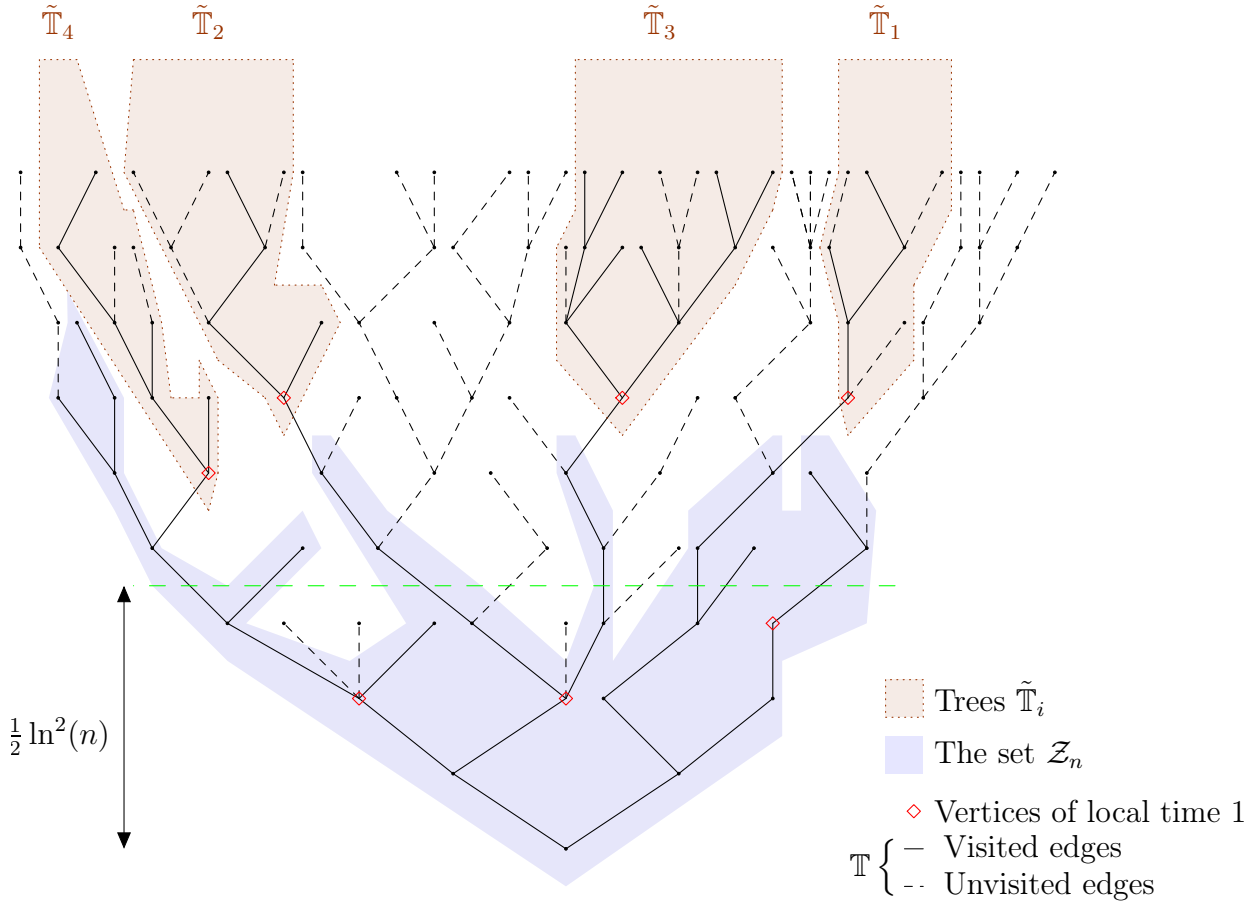
\begin{figure}[H]
\definecolor{zzttqq}{rgb}{0.6,0.2,0}
\definecolor{qqqqcc}{rgb}{0,0,0.8}
\begin{tikzpicture}[line cap=round,line join=round,>=triangle 45,x=0.25cm,y=1.0cm]
\clip(-19.5,-0.75) rectangle (54,11.5);
\fill[line width=0.4pt,dotted,color=zzttqq,fill=zzttqq,fill opacity=0.1] (-5,5.5) -- (-2,7) -- (-3,7.5) -- (-5.5,7.5) -- (-4.5,9) -- (-4.5,10.5) -- (-13,10.5) -- (-13.5,9) -- (-12.5,8.5) -- (-8.5,6.5) -- (-6,6) -- cycle;
\fill[dotted,color=zzttqq,fill=zzttqq,fill opacity=0.1] (-9,4.5) -- (-8.5,5) -- (-8.5,6) -- (-9.5,6.5) -- (-9.5,6) -- (-11,6) -- (-11.5,7) -- (-13,8.5) -- (-13.5,8.5) -- (-16,10.5) -- (-18,10.5) -- (-18,8) -- cycle;
\fill[dotted,color=zzttqq,fill=zzttqq,fill opacity=0.1] (13,5.5) -- (19,7.5) -- (21,8.5) -- (21.5,9) -- (21.5,10.5) -- (10.5,10.5) -- (10.5,8.5) -- (9.5,8) -- (9.5,7) -- (9.5,6.5) -- cycle;
\fill[dotted,color=zzttqq,fill=zzttqq,fill opacity=0.1] (25,5.5) -- (24.5,6) -- (24.5,7) -- (23.5,8) -- (24.5,9) -- (24.5,10.5) -- (30.5,10.5) -- (30.5,8.5) -- (28.5,7.5) -- (28.5,6.5) -- cycle;
\fill[dotted,color=qqqqcc,fill=qqqqcc,fill opacity=0.1] (-17.5,6) -- (-16.5,7) -- (-16.53,7.43) -- (-13.5,6) -- (-13.5,5) -- (-11.5,4) -- (-8,3.5) -- (-4,4.5) -- (-3,4) -- (-7,3) -- (-1.5,2.5) -- (1.5,3) -- (-3.5,5) -- (-3.5,5.5) -- (-2.5,5.5) -- (0.5,4.5) -- (5.5,3.5) -- (10,2.5) -- (11.5,3.5) -- (9.5,5) -- (9.5,5.5) -- (10.5,5.5) -- (12.5,4.5) -- (12.5,2.5) -- (16.5,4) -- (16.5,4.5) -- (21,5.5) -- (21.5,5.5) -- (21.5,4.5) -- (22.5,4.5) -- (22.5,5.5) -- (24,5.5) -- (26.5,4.5) -- (26,3) -- (21.5,2.5) -- (21.5,1.5) -- (10,-0.5) -- (-8,2.5) -- (-12,3.5) -- (-14,4.5) -- cycle;

\draw (10,0)-- (4,1);
\draw (10,0)-- (16,1);
\draw (4,1)-- (10,2);
\draw (4,1)-- (-1,2);
\draw (16,1)-- (12,2);
\draw (16,1)-- (21,2);
\draw (12,2)-- (17,3);
\draw (10,2)-- (5,3);
\draw [dash pattern=on 3pt off 3pt] (10,2)-- (10,3);
\draw (10,2)-- (12,3);
\draw (21,2)-- (21,3);
\draw [dash pattern=on 3pt off 3pt] (-1,2)-- (-5,3);
\draw (-1,2)-- (3,3);
\draw [dash pattern=on 3pt off 3pt] (-1,2)-- (-1,3);
\draw (-1,2)-- (-8,3);
\draw (5,3)-- (0,4);
\draw [dash pattern=on 3pt off 3pt] (5,3)-- (9,4);
\draw [dash pattern=on 3pt off 3pt] (12,3)-- (16,4);
\draw (12,3)-- (12,4);
\draw (17,3)-- (20,4);
\draw (17,3)-- (17,4);
\draw (-8,3)-- (-12,4);
\draw (-8,3)-- (-4,4);
\draw (0,4)-- (-3,5);
\draw [dash pattern=on 3pt off 3pt] (0,4)-- (3,5);
\draw [dash pattern=on 3pt off 3pt] (9,4)-- (6,5);
\draw (12,4)-- (10,5);
\draw [dash pattern=on 3pt off 3pt] (12,4)-- (15,5);
\draw (17,4)-- (21,5);
\draw (21,3)-- (26,4);
\draw (26,4)-- (23,5);
\draw [dash pattern=on 3pt off 3pt] (26,4)-- (26,5);
\draw (-12,4)-- (-9,5);
\draw (-12,4)-- (-14,5);
\draw (-3,5)-- (-5,6);
\draw [dash pattern=on 3pt off 3pt] (-3,5)-- (-1,6);
\draw [dash pattern=on 3pt off 3pt] (3,5)-- (5,6);
\draw [dash pattern=on 3pt off 3pt] (3,5)-- (1,6);
\draw (10,5)-- (13,6);
\draw [dash pattern=on 3pt off 3pt] (10,5)-- (7,6);
\draw [dash pattern=on 3pt off 3pt] (15,5)-- (17,6);
\draw [dash pattern=on 3pt off 3pt] (21,5)-- (19,6);
\draw (21,5)-- (25,6);
\draw [dash pattern=on 3pt off 3pt] (26,5)-- (29,6);
\draw (-14,5)-- (-17,6);
\draw (-14,5)-- (-14,6);
\draw (-9,5)-- (-12,6);
\draw (-9,5)-- (-9,6);
\draw [dash pattern=on 3pt off 3pt] (1,6)-- (-1,7);
\draw [dash pattern=on 3pt off 3pt] (5,6)-- (7,7);
\draw [dash pattern=on 3pt off 3pt] (5,6)-- (3,7);
\draw (13,6)-- (10,7);
\draw (13,6)-- (16,7);
\draw (-5,6)-- (-9,7);
\draw (-5,6)-- (-3,7);
\draw [dash pattern=on 3pt off 3pt] (19,6)-- (23,7);
\draw [dash pattern=on 3pt off 3pt] (25,6)-- (28,7);
\draw (25,6)-- (25,7);
\draw [dash pattern=on 3pt off 3pt] (29,6)-- (32,7);
\draw [dash pattern=on 3pt off 3pt] (29,6)-- (29,7);
\draw (-12,6)-- (-12,7);
\draw (-12,6)-- (-14,7);
\draw (-14,6)-- (-16,7);
\draw [dash pattern=on 3pt off 3pt] (-17,6)-- (-17,7);
\draw [dash pattern=on 3pt off 3pt] (-1,7)-- (-4,8);
\draw [dash pattern=on 3pt off 3pt] (-1,7)-- (3,8);
\draw [dash pattern=on 3pt off 3pt] (7,7)-- (4,8);
\draw [dash pattern=on 3pt off 3pt] (7,7)-- (8,8);
\draw [dash pattern=on 3pt off 3pt] (10,7)-- (10,8);
\draw (10,7)-- (13,8);
\draw (10,7)-- (11,8);
\draw (16,7)-- (14,8);
\draw [dash pattern=on 3pt off 3pt] (16,7)-- (16,8);
\draw (16,7)-- (19,8);
\draw [dash pattern=on 3pt off 3pt] (23,7)-- (21,8);
\draw [dash pattern=on 3pt off 3pt] (23,7)-- (23,8);
\draw (25,7)-- (24,8);
\draw (25,7)-- (28,8);
\draw [dash pattern=on 3pt off 3pt] (29,7)-- (31,8);
\draw [dash pattern=on 3pt off 3pt] (32,7)-- (34,8);
\draw [dash pattern=on 3pt off 3pt] (32,7)-- (32,8);
\draw [dash pattern=on 3pt off 3pt] (-9,7)-- (-11,8);
\draw (-9,7)-- (-6,8);
\draw [dash pattern=on 3pt off 3pt] (-12,7)-- (-13,8);
\draw (-14,7)-- (-17,8);
\draw [dash pattern=on 3pt off 3pt] (-14,7)-- (-14,8);
\draw [dash pattern=on 3pt off 3pt] (-17,7)-- (-19,8);
\draw [dash pattern=on 3pt off 3pt] (-19,8)-- (-19,9);
\draw (-17,8)-- (-15,9);
\draw [dash pattern=on 3pt off 3pt] (-11,8)-- (-13,9);
\draw [dash pattern=on 3pt off 3pt] (-11,8)-- (-9,9);
\draw [dash pattern=on 3pt off 3pt] (-4,8)-- (-4,9);
\draw (-6,8)-- (-8,9);
\draw [dash pattern=on 3pt off 3pt] (-6,8)-- (-5,9);
\draw [dash pattern=on 3pt off 3pt] (3,8)-- (1,9);
\draw [dash pattern=on 3pt off 3pt] (3,8)-- (3,9);
\draw [dash pattern=on 3pt off 3pt] (4,8)-- (7,9);
\draw [dash pattern=on 3pt off 3pt] (8,8)-- (10,9);
\draw [dash pattern=on 3pt off 3pt] (8,8)-- (8,9);
\draw (11,8)-- (11,9);
\draw (11,8)-- (13,9);
\draw [dash pattern=on 3pt off 3pt] (16,8)-- (15,9);
\draw [dash pattern=on 3pt off 3pt] (16,8)-- (17,9);
\draw (19,8)-- (18,9);
\draw (19,8)-- (21,9);
\draw [dash pattern=on 3pt off 3pt] (23,8)-- (22,9);
\draw [dash pattern=on 3pt off 3pt] (24,8)-- (25,9);
\draw [dash pattern=on 3pt off 3pt] (23,8)-- (22,9);
\draw [dash pattern=on 3pt off 3pt] (23,8)-- (23,9);
\draw [dash pattern=on 3pt off 3pt] (23,8)-- (23,9);
\draw [dash pattern=on 3pt off 3pt] (23,8)-- (24,9);
\draw (28,8)-- (26,9);
\draw [dash pattern=on 3pt off 3pt] (28,8)-- (30,9);
\draw [dash pattern=on 3pt off 3pt] (31,8)-- (31,9);
\draw [dash pattern=on 3pt off 3pt] (32,8)-- (34,9);
\draw [dash pattern=on 3pt off 3pt] (32,8)-- (32,9);
\draw [dash pattern=on 3pt off 3pt] (34,8)-- (36,9);
\draw [line width=0.4pt,dotted,color=zzttqq] (-5,5.5)-- (-2,7);
\draw [line width=0.4pt,dotted,color=zzttqq] (-2,7)-- (-3,7.5);
\draw [line width=0.4pt,dotted,color=zzttqq] (-3,7.5)-- (-5.5,7.5);
\draw [line width=0.4pt,dotted,color=zzttqq] (-5.5,7.5)-- (-4.5,9);
\draw [line width=0.4pt,dotted,color=zzttqq] (-4.5,9)-- (-4.5,10.5);
\draw [line width=0.4pt,dotted,color=zzttqq] (-4.5,10.5)-- (-13,10.5);
\draw [line width=0.4pt,dotted,color=zzttqq] (-13,10.5)-- (-13.5,9);
\draw [line width=0.4pt,dotted,color=zzttqq] (-13.5,9)-- (-12.5,8.5);
\draw [line width=0.4pt,dotted,color=zzttqq] (-12.5,8.5)-- (-8.5,6.5);
\draw [line width=0.4pt,dotted,color=zzttqq] (-8.5,6.5)-- (-6,6);
\draw [line width=0.4pt,dotted,color=zzttqq] (-6,6)-- (-5,5.5);
\draw [dotted,color=zzttqq] (-9,4.5)-- (-8.5,5);
\draw [dotted,color=zzttqq] (-8.5,5)-- (-8.5,6);
\draw [dotted,color=zzttqq] (-8.5,6)-- (-9.5,6.5);
\draw [dotted,color=zzttqq] (-9.5,6.5)-- (-9.5,6);
\draw [dotted,color=zzttqq] (-9.5,6)-- (-11,6);
\draw [dotted,color=zzttqq] (-11,6)-- (-11.5,7);
\draw [dotted,color=zzttqq] (-11.5,7)-- (-13,8.5);
\draw [dotted,color=zzttqq] (-13,8.5)-- (-13.5,8.5);
\draw [dotted,color=zzttqq] (-13.5,8.5)-- (-16,10.5);
\draw [dotted,color=zzttqq] (-16,10.5)-- (-18,10.5);
\draw [dotted,color=zzttqq] (-18,10.5)-- (-18,8);
\draw [dotted,color=zzttqq] (-18,8)-- (-9,4.5);
\draw [dotted,color=zzttqq] (13,5.5)-- (19,7.5);
\draw [dotted,color=zzttqq] (19,7.5)-- (21,8.5);
\draw [dotted,color=zzttqq] (21,8.5)-- (21.5,9);
\draw [dotted,color=zzttqq] (21.5,9)-- (21.5,10.5);
\draw [dotted,color=zzttqq] (21.5,10.5)-- (10.5,10.5);
\draw [dotted,color=zzttqq] (10.5,10.5)-- (10.5,8.5);
\draw [dotted,color=zzttqq] (10.5,8.5)-- (9.5,8);
\draw [dotted,color=zzttqq] (9.5,8)-- (9.5,7);
\draw [dotted,color=zzttqq] (9.5,7)-- (9.5,6.5);
\draw [dotted,color=zzttqq] (9.5,6.5)-- (13,5.5);
\draw [dotted,color=zzttqq] (25,5.5)-- (24.5,6);
\draw [dotted,color=zzttqq] (24.5,6)-- (24.5,7);
\draw [dotted,color=zzttqq] (24.5,7)-- (23.5,8);
\draw [dotted,color=zzttqq] (23.5,8)-- (24.5,9);
\draw [dotted,color=zzttqq] (24.5,9)-- (24.5,10.5);
\draw [dotted,color=zzttqq] (24.5,10.5)-- (30.5,10.5);
\draw [dotted,color=zzttqq] (30.5,10.5)-- (30.5,8.5);
\draw [dotted,color=zzttqq] (30.5,8.5)-- (28.5,7.5);
\draw [dotted,color=zzttqq] (28.5,7.5)-- (28.5,6.5);
\draw [dotted,color=zzttqq] (28.5,6.5)-- (25,5.5);

\draw (30,-0.25) node[anchor=east] {$\mathbb{T} \begin{cases} \\ \end{cases}$};

\draw (28,0)-- (29,0);
\draw (29.5,0) node[anchor=west] {Visited edges};

\draw [dash pattern=on 3pt off 3pt] (28,-0.5)-- (29,-0.5);
\draw (29.5,-0.5) node[anchor=west] {Unvisited edges};

\draw [color=red] (28.5,0.5) ++(-2.5pt,0 pt) -- ++(2.5pt,2.5pt)--++(2.5pt,-2.5pt)--++(-2.5pt,-2.5pt)--++(-2.5pt,2.5pt);
\draw (29,0.5) node[anchor=west] {Vertices of local time $1$};

\draw [dotted,color=zzttqq] (27,1.75)-- (29,1.75);
\draw [dotted,color=zzttqq] (29,1.75)-- (29,2.25);
\draw [dotted,color=zzttqq] (29,2.25)-- (27,2.25);
\draw [dotted,color=zzttqq] (27,2.25)-- (27,1.75);
\fill[dotted,color=zzttqq,fill=zzttqq,fill opacity=0.1] (27,1.75) -- (29,1.75) -- (29,2.25) -- (27,2.25) -- cycle;
\draw (29,2) node[anchor=west] {Trees $\tilde{\mathbb{T}}_i$};

\fill[dotted,color=blue,fill=blue,fill opacity=0.1] (27,1) -- (27,1.5) -- (29,1.5) -- (29,1) -- cycle;
\draw (29,1.25) node[anchor=west] {The set $\mathcal{Z}_n$};

\draw [dash pattern=on 6pt off 6pt,color=green] (27.5,3.5)-- (-13,3.5);
\draw [<->] (-13.5,3.5)-- (-13.5,0);
\draw (-17,1.5) node{$\frac{1}{2}\ln^2(n)$};

\draw [color=zzttqq](27,11) node {$\tilde{\mathbb{T}}_1$};
\draw [color=zzttqq](15,11) node {$\tilde{\mathbb{T}}_3$};
\draw [color=zzttqq](-9,11) node {$\tilde{\mathbb{T}}_2$};
\draw [color=zzttqq](-17,11) node {$\tilde{\mathbb{T}}_4$};
\begin{scriptsize}
\draw [fill=black] (10,0) circle (0.5pt);
\draw [fill=black] (4,1) circle (0.5pt);
\draw [fill=black] (16,1) circle (0.5pt);
\draw [color=red] (10,2) ++(-2.5pt,0 pt) -- ++(2.5pt,2.5pt)--++(2.5pt,-2.5pt)--++(-2.5pt,-2.5pt)--++(-2.5pt,2.5pt);
\draw [color=red] (-1,2) ++(-2.5pt,0 pt) -- ++(2.5pt,2.5pt)--++(2.5pt,-2.5pt)--++(-2.5pt,-2.5pt)--++(-2.5pt,2.5pt);
\draw [fill=black] (12,2) circle (0.5pt);
\draw [fill=black] (21,2) circle (0.5pt);
\draw [fill=black] (17,3) circle (0.5pt);
\draw [fill=black] (5,3) circle (0.5pt);
\draw [fill=black] (10,3) circle (0.5pt);
\draw [fill=black] (12,3) circle (0.5pt);
\draw [color=red] (21,3) ++(-2.5pt,0 pt) -- ++(2.5pt,2.5pt)--++(2.5pt,-2.5pt)--++(-2.5pt,-2.5pt)--++(-2.5pt,2.5pt);
\draw [fill=black] (-5,3) circle (0.5pt);
\draw [fill=black] (3,3) circle (0.5pt);
\draw [fill=black] (-1,3) circle (0.5pt);
\draw [fill=black] (-8,3) circle (0.5pt);
\draw [fill=black] (0,4) circle (0.5pt);
\draw [fill=black] (9,4) circle (0.5pt);
\draw [fill=black] (16,4) circle (0.5pt);
\draw [fill=black] (12,4) circle (0.5pt);
\draw [fill=black] (20,4) circle (0.5pt);
\draw [fill=black] (17,4) circle (0.5pt);
\draw [fill=black] (-12,4) circle (0.5pt);
\draw [fill=black] (-4,4) circle (0.5pt);
\draw [fill=black] (-3,5) circle (0.5pt);
\draw [fill=black] (3,5) circle (0.5pt);
\draw [fill=black] (6,5) circle (0.5pt);
\draw [fill=black] (10,5) circle (0.5pt);
\draw [fill=black] (15,5) circle (0.5pt);
\draw [fill=black] (21,5) circle (0.5pt);
\draw [fill=black] (26,4) circle (0.5pt);
\draw [fill=black] (23,5) circle (0.5pt);
\draw [fill=black] (26,5) circle (0.5pt);
\draw [color=red] (-9,5) ++(-2.5pt,0 pt) -- ++(2.5pt,2.5pt)--++(2.5pt,-2.5pt)--++(-2.5pt,-2.5pt)--++(-2.5pt,2.5pt);
\draw [fill=black] (-14,5) circle (0.5pt);
\draw [color=red] (-5,6) ++(-2.5pt,0 pt) -- ++(2.5pt,2.5pt)--++(2.5pt,-2.5pt)--++(-2.5pt,-2.5pt)--++(-2.5pt,2.5pt);
\draw [fill=black] (-1,6) circle (0.5pt);
\draw [fill=black] (5,6) circle (0.5pt);
\draw [fill=black] (1,6) circle (0.5pt);
\draw [color=red] (13,6) ++(-2.5pt,0 pt) -- ++(2.5pt,2.5pt)--++(2.5pt,-2.5pt)--++(-2.5pt,-2.5pt)--++(-2.5pt,2.5pt);
\draw [fill=black] (7,6) circle (0.5pt);
\draw [fill=black] (17,6) circle (0.5pt);
\draw [fill=black] (19,6) circle (0.5pt);
\draw [color=red] (25,6) ++(-2.5pt,0 pt) -- ++(2.5pt,2.5pt)--++(2.5pt,-2.5pt)--++(-2.5pt,-2.5pt)--++(-2.5pt,2.5pt);
\draw [fill=black] (29,6) circle (0.5pt);
\draw [fill=black] (-17,6) circle (0.5pt);
\draw [fill=black] (-14,6) circle (0.5pt);
\draw [fill=black] (-12,6) circle (0.5pt);
\draw [fill=black] (-9,6) circle (0.5pt);
\draw [fill=black] (-1,7) circle (0.5pt);
\draw [fill=black] (7,7) circle (0.5pt);
\draw [fill=black] (3,7) circle (0.5pt);
\draw [fill=black] (10,7) circle (0.5pt);
\draw [fill=black] (16,7) circle (0.5pt);
\draw [fill=black] (-9,7) circle (0.5pt);
\draw [fill=black] (-3,7) circle (0.5pt);
\draw [fill=black] (23,7) circle (0.5pt);
\draw [fill=black] (28,7) circle (0.5pt);
\draw [fill=black] (25,7) circle (0.5pt);
\draw [fill=black] (32,7) circle (0.5pt);
\draw [fill=black] (29,7) circle (0.5pt);
\draw [fill=black] (-12,7) circle (0.5pt);
\draw [fill=black] (-14,7) circle (0.5pt);
\draw [fill=black] (-16,7) circle (0.5pt);
\draw [fill=black] (-17,7) circle (0.5pt);
\draw [fill=black] (-4,8) circle (0.5pt);
\draw [fill=black] (3,8) circle (0.5pt);
\draw [fill=black] (4,8) circle (0.5pt);
\draw [fill=black] (8,8) circle (0.5pt);
\draw [fill=black] (10,8) circle (0.5pt);
\draw [fill=black] (13,8) circle (0.5pt);
\draw [fill=black] (11,8) circle (0.5pt);
\draw [fill=black] (14,8) circle (0.5pt);
\draw [fill=black] (16,8) circle (0.5pt);
\draw [fill=black] (19,8) circle (0.5pt);
\draw [fill=black] (21,8) circle (0.5pt);
\draw [fill=black] (23,8) circle (0.5pt);
\draw [fill=black] (24,8) circle (0.5pt);
\draw [fill=black] (28,8) circle (0.5pt);
\draw [fill=black] (31,8) circle (0.5pt);
\draw [fill=black] (34,8) circle (0.5pt);
\draw [fill=black] (32,8) circle (0.5pt);
\draw [fill=black] (-11,8) circle (0.5pt);
\draw [fill=black] (-6,8) circle (0.5pt);
\draw [fill=black] (-13,8) circle (0.5pt);
\draw [fill=black] (-17,8) circle (0.5pt);
\draw [fill=black] (-14,8) circle (0.5pt);
\draw [fill=black] (-19,8) circle (0.5pt);
\draw [fill=black] (-19,9) circle (0.5pt);
\draw [fill=black] (-15,9) circle (0.5pt);
\draw [fill=black] (-13,9) circle (0.5pt);
\draw [fill=black] (-9,9) circle (0.5pt);
\draw [fill=black] (-4,9) circle (0.5pt);
\draw [fill=black] (-8,9) circle (0.5pt);
\draw [fill=black] (-5,9) circle (0.5pt);
\draw [fill=black] (1,9) circle (0.5pt);
\draw [fill=black] (3,9) circle (0.5pt);
\draw [fill=black] (7,9) circle (0.5pt);
\draw [fill=black] (10,9) circle (0.5pt);
\draw [fill=black] (8,9) circle (0.5pt);
\draw [fill=black] (11,9) circle (0.5pt);
\draw [fill=black] (13,9) circle (0.5pt);
\draw [fill=black] (15,9) circle (0.5pt);
\draw [fill=black] (17,9) circle (0.5pt);
\draw [fill=black] (18,9) circle (0.5pt);
\draw [fill=black] (21,9) circle (0.5pt);
\draw [fill=black] (22,9) circle (0.5pt);
\draw [fill=black] (25,9) circle (0.5pt);
\draw [fill=black] (23,9) circle (0.5pt);
\draw [fill=black] (24,9) circle (0.5pt);
\draw [fill=black] (26,9) circle (0.5pt);
\draw [fill=black] (30,9) circle (0.5pt);
\draw [fill=black] (31,9) circle (0.5pt);
\draw [fill=black] (34,9) circle (0.5pt);
\draw [fill=black] (32,9) circle (0.5pt);
\draw [fill=black] (36,9) circle (0.5pt);
\end{scriptsize}
\end{tikzpicture} 
\caption{ The tree $\t$ (represented up to generation 9), the set $\mathcal{Z}_n$ and the trees $\tilde{\t}_i$. }
\label{f:main}
\end{figure}

By the Kesten-Stigum theorem, we have that  $m^{-n}\sum_{|x|=n} 1$ converges almost surely towards a random variable which is positive on the event of survival of the Galton--Watson tree. By Lemma~\ref{l:range} together with the Borel--Cantelli lemma, it yields that $R_{\tau_{\proot}^{(j_n) }}\ge n$ for $n$ large enough, $\p^*$-almost surely.   It yields that for $\GW^*$-a.e. tree $\t$,  $\lim_{n\to\infty} \P_{\t}(R_{\tau_{\proot}^{(j_n)}}<  n)=0$. Similarly, Lemma~\ref{l:longpath} implies  $\lim_{n\to\infty} \P_{\t}( \max_{x\in\mathcal Z_{n}} |x| > \ln^2(n) )=0$ for $\GW$-a.e. tree $\t$.
Finally, observe that, by comparison with a one-dimensional random walk, for any vertex $x\in \t$ and any integer $k$, $\E_{\t}\left[N_{x}^{(k)}\right] = km^{-|x|}$. By the Markov inequality, we get that $\P_{\t}(\sum_{|x|\le \ln^2(n)} N_{x}^{(j_n)} > j_n\ln^3(n)) \le {1\over \ln^3(n)}\sum_{k\le \ln^2(n)} m^{-k}\sum_{|x|=k}1$ which goes to $0$ $\GW$-a.s.\ as $n\to \infty$. As a result, we can restrict to the event (both for the annealed and quenched convergences in law),
\begin{equation}\label{eq:En}
\mathcal E_{n} :=\left\{R_{\tau_{\proot}^{(j_n)}} \ge  n, \sum_{x\in\mathcal Z_{n}} N_{x}^{(j_n)} \le j_{n}\ln(n)^3,\, \max_{x\in\mathcal Z_{n}} |x|\le \ln^2(n)\right\}.
\end{equation}

\noindent We will repeatedly use the following simple consequence of Lemma 2.3 of~\cite{pitman}, to which we will refer by  (F).  If $(T,d)$ and $(T',d')$ are two real trees and $\varphi:T\to T'$ is a surjective map that sends the root of $T$ to the root of $T'$, then the Gromov-Hausdorff distance $d_{G}(T,T')$ between $T$ and $T'$  is smaller than ${1\over 2} \sup\{|d(x,y) -d'(\varphi(x),\varphi(y))|,(x,y) \in T^2 \}$. \\

On the event $\mathcal E_{n}$, we have $\R_{n} \subset \R_{\tau_{\proot}^{(j_n)}}$. Let $\tilde \f$ be the forest associated to the trees $(\tilde \t_{i})_{i}$. Notice that each tree $\tilde \t_{i}$ is associated with an excursion of the walk  from its root. We call $(\tilde X_{k})_{k\ge 0}$ the concatenation of these excursions, so that $(\tilde X_{ k})_{ k\ge 0}$ restricted to $(\tilde \t_{i})_{i\le L_n}$ mimics the trajectory of the walk $(X_{k})_{k\ge 0}$ in the trees $(\tilde \t_{i})_{i\le L_{n}}$. Finally, for $k\ge 1$, $\tilde \R_{k}$ denotes the set of vertices $\{\tilde X_{\ell},\, \ell\le k\}$. We let $\tilde h(\tilde X_{k})$ be the generation of $\tilde X_{k}$ inside the subtree $\tilde \t_{i}$ to which it belongs. Then, it is sufficient to prove the convergence in law
\begin{equation}\label{eq:cv-tilde}
{1\over \sqrt{\sigma^2 n}} \left( \left\{\tilde h\left(\tilde X_{\lfloor nt\rfloor }\right) \right\}_{t\in [0,1]},\tilde\R_{n}\right) \Rightarrow \left( |{\bf B}|,\T_{|{\bf B}|}  \right)
\end{equation}
both under the annealed probability $\p^*$ and the quenched probability $\P_{\t}$ to prove the theorem. Indeed, let $\tilde n:=\sum_{k\le n} {\bf 1}_{\{X_{k} \notin \mathcal Z_{n}\}}$ be the time spent in the forest $\tilde \f$ until time $n$. On the event $\mathcal E_{n}$, we have $0\le n-\tilde n \le 2j_{n}\ln^3(n)=o(n)$, and $||X_{k}|-\tilde h(\tilde X_{k})| |\le \ln^2(n) + \max_{i\le n-\tilde n} |\tilde h(\tilde X_{k}) - \tilde h(\tilde X_{k-i})|$ which will be $o(n^{1/2})$ in probability uniformly in $k$ if (\ref{eq:cv-tilde}) is proved. On the other hand, $d_{G}(\R_{n},\tilde \R_{\tilde n})\le \ln^2(n)$ (use (F) with $\varphi:\R_{n}\to  \tilde \R_{\tilde n}$ being the identity on $\tilde \f$, and mapping any vertex of $\mathcal Z_{n}$ to the root of $\tilde \R_{\tilde n}$), and $d_{G}(\tilde \R_{\tilde n},\tilde \R_{n})\le \max_{i\le n-\tilde n} |\tilde h(\tilde X_{\tilde n}) -\tilde h(\tilde X_{\tilde n +i})|$ \ (using (F) with $\varphi:\tilde \R_{\tilde n} \to \tilde \R_{n}$ being the identity on $\tilde \R_{\tilde n}$ and mapping the other vertices to $\tilde X_{\tilde n}$) which will be $o(n^{1/2})$ in probability after (\ref{eq:cv-tilde}) is proved. 

\noindent {\it The annealed case}

 It is actually enough to prove (\ref{eq:cv-tilde}) under $\p^*_{n_{0}}:=\p(\cdot \mid \sum_{|x|=n_{0} } 1 >0)$ for any integer $n_{0}$.  Notice that the trees $(\tilde \t_{i})_{i}$ are still i.i.d.\ under $\p^*_{n_{0}}$ as soon as $n$ is such that ${1\over 2} \ln(n)^2 \ge n_{0}$ (this was not true under $\p^*$). To a tree $\tilde \t_{i}$, we associate the trees $(\tilde{\Tb}_{i}, \tilde \Tb_{i}^{(w)}, \tilde \Tb_{i}^{(r)})$ as in Section~\ref{s:reduction}. The concatenation of these trees yields the forests $(\tilde{\Fb}, \tilde \Fb^{(w)},\tilde \Fb^{(r)})$. Let $\tilde H^{(w)}, \tilde H^{(r)}$ be the height functions associated to the forests $\tilde \Fb^{(w)}$ and $\tilde \Fb^{(r)}$. Recall that by construction we have $\tilde H^{(w)}(k)=\tilde h(\tilde X_{k})$ for any $k\ge 0$. 
By an abuse of notation, we denote by $\tilde \Fb^{(r)}\cap \tilde \R_{n}$ the forest $\tilde \Fb^{(r)}$ restricted to vertices which were in $\tilde \R_{n}$ before the reduction of Section~\ref{s:reduction}. We see that $d_{G}(\tilde \R_{n},  \tilde \Fb^{(r)}\cap \tilde \R_{n}) \le  \tilde \ell(n)$, where $\tilde \ell(n)$ is the maximal length of an edge explored by the depth-first search in the forest $\tilde \Fb^{(r)}$ before visiting $n$ vertices of type $1$.  By Lemma~\ref{l:loic} (ii), we see that $d_{G}(\tilde \R_{n},  \tilde \Fb^{(r)}\cap \tilde \R_{n})$ is $o(\sqrt{n})$ in probability. We want to show the convergence of $\tilde \Fb^{(r)}\cap \tilde \R_{n}$ for the Gomov-Hausdorff topology. This is equivalent with showing the convergence of its contour function, see Lemma 2.3 of~\cite{DuLG05}. In our case, the convergence of the contour function would be implied by that of the height function of the tree. Indeed, the argument used in the proof of Theorem~2.4.1 of~\cite{DuLG02} can be adjusted to height/contour functions of trees with edge lengths, as inequality~(2.34) of~\cite{DuLG02} can be adjusted to such functions and inequality~(2.35) remains valid. As a result, we need to show that under $\p^*_{n_{0}}$, (with $\tilde R_{n}$ being the cardinal of $\tilde \R_{n}$),
$$
{1\over \sqrt{\sigma^2 n}} \left( \left\{ \tilde H^{(w)}\left({\lfloor  nt \rfloor}\right)\right\}_{t\in [0,1]},\left\{ \tilde H^{(r)}\left(\lfloor \tilde R_{n} t\rfloor \right)\right\}_{t\in [0,1]} \right) \Rightarrow \left( |{\bf B}| ,  {|{\bf B}|} \right).
$$

\noindent  This  follows from Proposition~\ref{p:loic} once we prove that ${\tilde R_{n} \over n}$ converges towards $\frac{b_1}{2}$ in probability.  Let $\tilde u^{(w)}(i)$, $\tilde u^{(r)}(i)$ be the indices in the forests $\tilde \Fb^{(w)}$ and $\tilde \Fb^{(r)}$ of the $i$-th vertex of type $1$ for the first-depth search order. Let $i_{n}$ be the number of vertices of type $1$ that was visited by $(\tilde X_{k})_{k}$ before time $n$. Since $\tilde H^{(w)}$ mimics the walk $\tilde X$, we deduce that $\tilde u^{(w)}(i_{n})$ and $\tilde u^{(w)}(i_{n}+1)$ are lower and upper bounds of $n$. By Lemma~\ref{l:loic} (i), it implies that $i_{n}/n$ converges to ${a_{1}b_{1}\over 2}$ in probability.  On the other hand, $\tilde R_{n}$ is between $\tilde u^{(r)}(i_{n})$ and $\tilde u^{(r)}(i_{n}+1)$. Lemma~\ref{l:loic} (i) yields that ${\tilde R_{n} \over i_{n}}$ converges to ${1\over a_{1}}$ in probability. The proof is complete in the annealed case. Observe that the same proof works under $\p$ which means that (\ref{eq:cv-tilde}) also holds under $\p$.\\

\noindent {\it The quenched case}

Let $\Xi_{n}:= {1\over \sqrt{\sigma^2 n}} ( \tilde \R_{n}, \{ \tilde h(\tilde X_{\lfloor nt\rfloor })\}_{t\in [0,1]})$. Let $F$ be some bounded nonnegative continuous function on the space of real trees times c\`adl\`ag functions on $[0,1]$. We know by (\ref{eq:cv-tilde}) (used in the annealed setting under $\p$) that $\e\left[F(\Xi_{n})\right]$ converges. We want to show  that $\E_{\t}\left[ F(\Xi_{n}) \right]$ converges to the same limit 
for $\GW$-a.e. every tree $\t$. Hence we are going to show that $\E_{\t}\left[ F(\Xi_{n})  \right]$ concentrates around its mean value. To do this, we will show that the variance decreases fast enough. Let $\mathcal V$ be the space of all  finite sets of words in $\bigcup_{k\ge 0} {\mathbb N}^k$. Let $\mathcal A$ be the set of couples $(V_{1},V_{2})\in \mathcal V^2$  such that no vertex of $V_{1}$ is an ancestor of a vertex in $V_{2}$ and vice-versa (it includes all couples which contain $\emptyset$). Recall that by construction if $L_{n}=0$, then $\E_{\t}\left[ F(\Xi_{n}) \right] = \e\left[F(\Xi_{n})\right]$. We compute that 
$$
E_{\GW}\left[  \E_{\t}\left[ F(\Xi_{n})\right] ^2 \right] = \sum_{(V_{1},V_{2}) \in \mathcal V^2 } E_{\GW}\left[  \E_{\t}\left[ F(\Xi_{n}){\bf 1}_{\{ \mathcal L_{n}= V_{1} \}} \right]   \E_{\t}\left[ F(\Xi_{n}){\bf 1}_{\{ \mathcal L_{n}= V_{2} \}} \right]  \right].
$$

\noindent We divide the sum in two, depending on whether $(V_{1},V_{2})$ belongs to $\mathcal A$ or not. We observe that, 
\begin{eqnarray*}
 && \sum_{(V_{1},V_{2})\in \mathcal A } E_{\GW}\left[  \E_{\t}\left[ F(\Xi_{n}){\bf 1}_{\{ \mathcal L_{n}= V_{1} \}} \right]   \E_{\t}\left[ F(\Xi_{n}){\bf 1}_{\{ \mathcal L_{n}= V_{2} \}} \right]  \right]\\
&=&
\sum_{(V_{1},V_{2})\in \mathcal A  } E_{\GW}\left[  \P_{\t}(\mathcal L_{n}=V_{1})\P_{\t}(\mathcal L_{n}=V_{2}) \right]   \e\left[ F(\Xi_{n}) \right]^2  \\
&\le& 
\e\left[ F(\Xi_{n})\right]^2 
\end{eqnarray*}

\noindent by the branching property. For the rest of the sum, we just write since $F$ is bounded by, say, some $M$,
\begin{eqnarray*}
&& \sum_{(V_{1},V_{2}) \notin \mathcal A  } E_{\GW}\left[  \E_{\t}\left[ F(\Xi_{n}){\bf 1}_{\{ \mathcal L_{n}= V_{1} \}} \right]   \E_{\t}\left[ F(\Xi_{n}){\bf 1}_{\{ \mathcal L_{n}= V_{2} \}} \right]  \right] \\
&\le&
M^2 \sum_{(V_{1},V_{2}) \notin \mathcal A  } E_{\GW}\left[\P_{\t}(\mathcal L_{n}= V_{1})\P_{\t}(\mathcal L_{n}= V_{2})  \right].
\end{eqnarray*}

\noindent We end up with 
$$
E_{\GW}\left[ \E_{\t}\left[ F(\Xi_{n}) \right] ^2 \right] - \e\left[ F(\Xi_{n})\right]^2 \le M^2 \sum_{(V_{1},V_{2}) \notin \mathcal A  } E_{\GW}\left[\P_{\t}\left(\mathcal L_{n}= V_{1}\right)\P_{\t}\left(\mathcal L_{n}= V_{2}\right)  \right].
$$

\noindent It is enough to show that the right-hand side is summable in $n$. We have 
$$
\sum_{(V_{1},V_{2}) \notin \mathcal A  } E_{\GW}\left[\P_{\t}(\mathcal L_{n}= V_{1})\P_{\t}\left(\mathcal L_{n}= V_{2}\right)  \right] = \sum_{V_{1} \in \mathcal V  } E_{\GW}\left[\P_{\t}\left(\mathcal L_{n}= V_{1} \right)\P_{\t}\left( (\mathcal L_{n}, V_{1} )\notin \mathcal A\right)  \right].
$$ 

\noindent Notice that we can restrict to $V_{1}$ such that $\min_{x\in V_{1}} |x| \ge {1\over 2}\ln^2(n)$ and with cardinal smaller than $n$. Let us bound the probability $\P_{\t}( (\mathcal L_{n}, V_{1} )\notin \mathcal A)$, for such a set $V_{1}$. If $(\mathcal L_{n}, V_{1} )\notin \mathcal A$, it means that the random walk $(X_{k})_{k}$  has visited before time $\tau_{\proot}^{(j_{n})}$ the ancestor at generation ${1\over 2} \ln^2(n)$ of a vertex in $V_{1}$. This probability is smaller than $j_{n} m^{-{1\over 2} \ln^2(n)}$, and there is at most $n$ such ancestors. Therefore, $\P_{\t}( (\mathcal L_{n}, V_{1} )\notin \mathcal A)\le  nj_{n}m^{-{1\over 2} \ln^2(n)} $ and we deduce that 
\begin{equation}\label{eq:quenched}
\sum_{(V_{1},V_{2}) \notin \mathcal A  } E_{\GW}\left[\P_{\t}\left(\mathcal L_{n}= V_{1}\right)\P_{\t}\left(\mathcal L_{n}= V_{2}\right)  \right] \le  nj_{n} m^{-{1\over 2} \ln^2(n)}
\end{equation}

\noindent which is summable indeed. $\Box$

\section{Random walks in random environment on a Galton--Watson tree}
\label{s:rwre}
Let $(V(x))_{x\in\t}$ be a branching random walk on $\mathbb R$ and $\mathbb{V}:=(\t, (V(x))_{x\in\t})$. More specifically, we let $V(\proot)=V(\root)=0$. At generation $n$, and conditionally on $\sigma\{V(y),\, |y|\le n\}$, the random variables $(V(xi)-V(x),i\le \nu(x))_{|x|=n}$ are supposed to be independent and identically distributed. The common law does not depend on $n$. Conditionally on $\mathbb{V}$, we consider the Markov chain $(X_{n})_{n\ge 0}$ on $\t$ such that for any $x\neq \proot$,
\begin{eqnarray}
\P_{\v}(X_{n+1}=\px\,|\, X_n=x) &=& {{\rm e}^{-V(x)} \over {\rm e}^{-V(x)} + \sum_{i=1}^{\nu(x)} {\rm e}^{-V(xi)} }, \label{p(x,px)2}\\
\P_{\v}(X_{n+1}=xi\,|\, X_n=x) &=& {{\rm e}^{-V(xi)} \over {\rm e}^{-V(x)} + \sum_{i=1}^{\nu(x)} {\rm e}^{-V(xi)} } \; \; \mbox{for any} \;\; 1\le i\le \nu(x) \label{p(x,xi)2},
\end{eqnarray} 

\noindent and which is reflected on $\proot$. The biased random walk is the particular case $V(x) = |x| \ln(\lambda)$. We let $\p$ be the measure $\P_{\v}$ integrated over the law of $\v$. They are associated to the expectations $\e$ and $\E_{\v}$. The measure of $\v$ will be denoted by ${\rm BW}$, and by ${\rm BW}^*$ when conditioned upon the event that $\t$ is infinite. They are associated to the expectations $E_{\BW}$ and $E_{\BW^*}$. We introduce 
$$
\psi(t):= E_{\BW}[\sum_{|x|=1} {\rm e}^{-tV(x)}].
$$

\noindent   Lyons and Pemantle~\cite{LP92} showed that the Markov chain is recurrent or transient depending on whether $\min_{t\in[0,1]} \psi(t)$ is  respectively $\le 1$ or $> 1$ (moreover it is positive recurrent in the case $<1$). We consider the critical  case $\min_{t\in[0,1]} \psi(t)=1$. In the papers~\cite{HuShi07}, \cite{HuShi07'}, \cite{HuShi15}, Hu and Shi proved that when $\psi'(1)\ge 0$ the random walk is of order $\log(n)^3$   whereas when $\psi'(1)<0$, it is of order $n^\nu$ where $\nu:= 1-{1\over \min(\kappa,2)}$ and $\kappa:=\inf\{ t>1\,:\, \psi(t)=1\}\in (1,+\infty]$. In the latter case, Andreoletti and Debs~\cite{andreoletti-debs} showed that the local time of the root at time $n$ was of order $n^{1/\min(\kappa,2)}$, and that the largest generation entirely visited at time $n$ was of order $\ln(n)$. Then, in~\cite{Hu}, Hu showed that the local time was actually converging in law after a suitable rescaling. \\
In the case $\kappa>2$, the walk is therefore of order $\sqrt{n}$ and we expect  a central limit theorem. Indeed Faraud~\cite{faraud} generalized the result of Peres and Zeitouni~\cite{PeZe06} and showed a central limit theorem, at least when $\kappa>5$ in the annealed case, and $\kappa>8$ in the quenched case. We extend this result to the convergence of the trace of the random walk to the Brownian forest, with the condition $\kappa>2$. At the present time, no central limit theorem has been shown for $\kappa≤2$. We keep the notation $\R_{n}$, $\T_{|{\bf B}| }$ of the introduction. \\

\begin{theorem}\label{t:rwre}
Assume that $\psi(1)=1$, $\psi(2)<1$ and $\e\Big[ \sum_{|x|=|y|=1,x\neq y} \ee^{-V(x)}\ee^{-V(y)} \Big]<\infty$.  Let 
$$
\sigma^2:= (1-\psi(2)) / \e\Big[\sum_{\substack{|x|=|y|=1 \\ x\neq y}} \ee^{-V(x)}\ee^{-V(y)}\Big].
$$
 Under $\p^*$ (annealed case) and under $\P_{\v}$ for ${\rm BW}^*$-a.e. $\v$(quenched case), the following joint  convergence in law holds as $n\to\infty$:
$$
{ 1\over \sqrt{  \sigma^2 n}} \left(\left\{   |X_{\lfloor nt \rfloor }| \right\}_{t\in [0,1]},\R_{n}\right) \Rightarrow \left(|{\bf B} |,\T_{|{\bf B}| } \right)
$$
for the  Skorokhod topology on the space of c\`adl\`ag functions and the Gromov-Hausdorff topology on the space of real trees.
\end{theorem}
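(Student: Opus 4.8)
The plan is to follow the proof of Theorem~\ref{t:main} essentially line by line, the only change being that the fixed bias $\lambda=m$ is replaced by the branching random walk environment; I describe the substitutions. Write $\omega(x):=\ee^{-V(x)}$, so that the walk on $\v$ is the reversible Markov chain with conductance $\omega(x)$ on the edge $(\px,x)$, and observe that its transition probabilities at $x$ depend only on the displacements $(V(xi)-V(x))_{i\le\nu(x)}$, which are i.i.d.\ across all words $x$.

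First, Section~\ref{s:multi-type} goes through with no change. Using the coupling in the proof of Lemma~\ref{l:multi-type} --- attach to each word $x$ an independent local environment $(\nu(x),(V(xi)-V(x))_i)$ and an independent sequence $\mathcal P_x$ of i.i.d.\ reflected-walk choices --- one gets that under $\p$ the tree $\{x:N_x^{(k)}\ge1\}$, with types $N_x^{(k)}$, is a multi-type Galton--Watson tree: given that a vertex has type $i$, the local times of its children form the negative-multinomial vector obtained by drawing a fresh local environment and counting, for each child, its occurrences in $\mathcal P_x$ before the parent occurs $i$ times, a law depending on $i$ only. A direct moment computation then gives that $b_i:=i$ is a right eigenvector of the mean matrix for the eigenvalue $1$ (this is exactly where $\psi(1)=1$ enters), that a left eigenvector $a=(a_i)_i$ exists with $a_i>0$, $\sum_ia_i=1$, $\sum_ia_ib_i=1$, and that the $b$-transformed chain $(\hat N_k)_k$ satisfies $\sum_j\hat p_{i,j}\,j=(1+\psi(2))+\psi(2)\,i$; hence $i\mapsto i$ is a strictly contracting Lyapunov function as soon as $\psi(2)<1$, and Lemma~\ref{l:lyapounov} (exponential moments for the return time $\hat\gamma_1$ to $1$) holds verbatim.

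Next, the reduced trees $\Tb^{(r)},\Tb^{(w)}$ of Section~\ref{s:reduction} are built exactly as there and, by Proposition~1 of~\cite{loic}, are leafed Galton--Watson trees with edge lengths (type $f$ $=$ local time $1$). Condition~(i) follows from $a_1>0$ and conditions~(ii)--(iii) from the exponential tail of $\hat\gamma_1$; the type-$f$ subtree is critical by the eigenvector relation $\sum_jm_{1,j}j=1$, and its offspring variance $\Sigma_f^2$ is finite precisely because $\e\big[\sum_{|x|=|y|=1,\,x\ne y}\omega(x)\omega(y)\big]<\infty$. Tracking the constants through Section~3.4 of~\cite{loic} one obtains $C_1=a_1b_1/2$, $C_2=1/(a_1b_1)$, $\Sigma_f=\eta/(b_1\sqrt{a_1})$ with $\eta^2=2\,\e\big[\sum_{x\ne y}\omega(x)\omega(y)\big]$, hence the analogue of Proposition~\ref{p:loic} with limit $(|B_t|,|B_{\frac2{b_1}t}|)$ and $\sigma^2=2b_1/\eta^2$; one checks that $2b_1/\eta^2=(1-\psi(2))/\e\big[\sum_{x\ne y}\omega(x)\omega(y)\big]$ (in the deterministic environment $V(x)=|x|\ln m$ this recovers the constant of Theorem~\ref{t:main}).

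Finally, in Section~\ref{s:proof} the one-dimensional comparisons are replaced by the electrical-network identities $\E_\v[N_x^{(k)}]=k\,\omega(x)$ and $\P_\v(N_x^{(1)}\ge1)=\big(1+\sum_{y\in\,]\!]\root,x]\!]}\ee^{V(y)}\big)^{-1}$, and $m^{-n}\sum_{|x|=n}1$ is replaced by the additive martingale $W_n:=\sum_{|x|=n}\omega(x)$, which is $L^2$-bounded (again because $\psi(2)<1$), hence converges a.s.\ to a limit positive on survival; with $\mathcal S_\eps$ replaced by $\{\forall n,\ W_n\ge\eps\}$, Lemmas~\ref{l:range} and~\ref{l:longpath} and the estimates controlling $\mathcal Z_n$ are reproved with these substitutions, while the excursion decomposition of the trace into i.i.d.\ trees, the reduction to~(\ref{eq:cv-tilde}), and its proof via Lemma~\ref{l:loic} and the analogue of Proposition~\ref{p:loic} are unchanged. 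In the quenched case $\GW$ is replaced by $\BW$, and since the walk inside a subtree depends on the environment only through displacements, the branching property of $\v=(\t,(V(x))_x)$ suffices to reproduce the variance bound~(\ref{eq:quenched}). I expect the only genuinely delicate point to be the end of the second paragraph: unlike in the biased case the downward probability $\P_\v(X_{n+1}=\px\mid X_n=x)$ is not bounded away from $0$, so the local times of deep vertices are heavy-tailed, and it is exactly the two hypotheses $\psi(2)<1$ (equivalently $\kappa>2$) and $\e[\sum_{x\ne y}\omega(x)\omega(y)]<\infty$ that make the Lyapunov estimate, the finiteness of $\Sigma_f^2$, and the moment conditions~(ii)--(iii) go through; everything else is a transcription of Sections~\ref{s:multi-type}--\ref{s:proof}.
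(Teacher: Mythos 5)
Your outline tracks the paper's architecture closely: adapting the multi-type local-time description, establishing a Lyapunov function for the size-biased chain $(\hat N_k)$ (and your computation $\sum_j\hat p_{i,j}\,j=1+\psi(2)(i+1)$ is exactly the paper's), reducing to leafed Galton--Watson trees with edge lengths, and reusing the excursion-decomposition and quenched-averaging arguments. However, there is one genuine gap in the place where you believe the adaptation is a pure transcription.

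The adaptation of Lemma~\ref{l:range} does \emph{not} go through ``with these substitutions.'' The proof of Lemma~\ref{l:range} pivots on the fact that for $|x|=\ell$, $\P_{\t}(N_x^{(1)}\ge1)=(m-1)/(m^{\ell+1}-1)$ is the same for every vertex at that level, so it can be pulled out of the union over $\{|x|=\ell\}$ and then $\ell$ is tuned against $ka$. In the random-environment setting $\P_{\v}(N_x^{(1)}\ge1)=\bigl(1+\sum_{y\in\,]\!]\root,x]\!]}\ee^{V(y)}\bigr)^{-1}$ fluctuates over the level set $\{|x|=\ell\}$ and can be arbitrarily small for some $x$ at a given generation, so replacing $\mathcal S_\eps$ with $\{\forall n,\ W_n\ge\eps\}$ and running the same calculation gives you no uniform lower bound. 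The paper (Lemma~\ref{l:range2}) replaces the fixed-generation level set with the \emph{stopping line} $G_k:=\{x: \ee^{V(x)}>k\ln^2 k,\ \ee^{V(y)}\le k\ln^2 k\ \forall\, y \text{ strict ancestor of }x\}$, controls $\sum_{x\in G_k}\ee^{-V(x)}$ from below via the Biggins--Kyprianou optional-line theorem, bounds $\max_{x\in G_k}|x|$ by a Chernoff/Borel--Cantelli argument, and estimates $\#G_k'$ via Nerman's renewal theorem for C-M-J processes. None of this is in your sketch, and the resulting Lemma~\ref{l:range2} is also quantitatively weaker (an a.s.\ statement with $a=\fl{\ln^{10}k}$, forcing $j_n:=\fl{\sqrt n\ln^5 n}$ in place of $\fl{\sqrt n\ln^3 n}$). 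Relatedly, you claim $W_n$ is $L^2$-bounded under $\psi(2)<1$, but $\e[W_n^2]$ involves $\e\bigl[\sum_{x\ne y}\ee^{-V(x)}\ee^{-V(y)}\bigr]$ at every generation; $L^2$-boundedness needs your third hypothesis and more bookkeeping than you indicate, and the paper in fact invokes Lyons' result on the additive martingale rather than $L^2$ boundedness.

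A secondary, purely computational issue: with your normalization $b_i:=i$ (so $b_1=1$), the stated $\eta^2=2\,\e\bigl[\sum_{x\ne y}\omega(x)\omega(y)\bigr]$ yields $\sigma^2=2b_1/\eta^2=1/\e\bigl[\sum_{x\ne y}\omega(x)\omega(y)\bigr]$, which misses the $1-\psi(2)$ factor you yourself quote as the target; the correct $\eta^2$ carries an extra $1/(1-\psi(2))$ coming from the normalization of the left eigenvector (the paper tracks this via the explicit $a_i,b_i$ built from $\sum_\ell\ee^{-\hat S_\ell}$ in Lemma~\ref{l:hm}). This is fixable, but as written your constants do not close.
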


The proof follows the same lines as for Theorem~\ref{t:main}. We proceed by adapting the steps of the proof to our case.

\subsection{Description of the process of local times}

We adapt Section~\ref{s:multi-type}. Lemma~\ref{l:multi-type} still holds. The mean matrix is now given by
$$
m_{i,j}:=\e\left[\sum_{|x|=1} {\bf 1}_{\{N_{x}=j\}} \mid N_{\root}=i\right]  =  \binom{i+j-1}{j} \e\left[ \sum_{|x|=1} {{\rm e }^{-jV(x)} \over (1+{\rm e}^{-V(x)})^{i+j}}\right].
$$

We introduce a random variable $\hat S_{1}$ with law given by $\e[f(\hat S_{1})] = \e\left[\sum_{|x|=1} f(V(x)){\rm e}^{-V(x)}  \right]$. Notice that under the assumptions of Theorem~\ref{t:rwre} we have  $\e\left[\hat S_{1}\right]>0$ (it may be infinite if the mean number of children in $\t$ is infinite). We define $(\hat S_{k})_{k\ge 1}$ as the random walk with step distribution given by the law of $\hat S_{1}$. Let
\begin{eqnarray*}
a_{i} &:=& \e\left[  {\left(\sum_{\ell \ge 1} {\rm e}^{-\hat S_{\ell}}\right)^{i-1}   \over \left(1+\sum_{\ell \ge 1} {\rm e}^{-\hat S_{\ell}}\right) ^{i+1}} \right] / \e\left[ {1\over 1 + \sum_{\ell \ge 1} {\rm e}^{-\hat S_{\ell}}}\right],\\
b_{i} &:=& i\e\left[ {1\over 1 + \sum_{\ell \ge 1} {\rm e}^{-\hat S_{\ell}}}\right].
\end{eqnarray*}

\begin{lemma}\label{l:hm}
The vectors $(a_i)_{i\in\N^*}$ and $(b_i)_{i\in\N^*}$ are the left and right eigenvectors associated to the eigenvalue $1$ of the mean matrix $(m_{i,j})_{i,j\geq 1}$. Moreover, they are normalized so that \mbox{$\sum_{i} a_{i}=1$} and $\sum_{i} \pi_{i}=1$ (where $\pi_{i}:=a_{i}b_{i}$).
\end{lemma}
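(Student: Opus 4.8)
The plan is to reparametrise everything through the single random variable $\rho:=(1+W)^{-1}$, where $W:=\sum_{\ell\ge1}{\rm e}^{-\hat S_{\ell}}$. Since the step law of $(\hat S_{\ell})$ is a probability measure (its total mass equals $\psi(1)=1$) with $\e[{\rm e}^{-\hat S_{1}}]=\psi(2)<1$, one has $\e[W]=\sum_{\ell\ge1}\psi(2)^{\ell}<\infty$, so $W<\infty$ and $\rho\in(0,1)$ almost surely, and in particular $0<\e[\rho]\le1$. Using the elementary identity $W^{i-1}(1+W)^{-i-1}=\rho^{2}(1-\rho)^{i-1}$, the definitions become $a_{i}=\e[\rho^{2}(1-\rho)^{i-1}]/\e[\rho]$, $b_{i}=i\,\e[\rho]$ and $\pi_{i}:=a_{i}b_{i}=i\,\e[\rho^{2}(1-\rho)^{i-1}]$. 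The two normalisations then drop out at once from Tonelli together with the geometric sums $\sum_{i\ge1}(1-\rho)^{i-1}=\rho^{-1}$ and $\sum_{i\ge1}i(1-\rho)^{i-1}=\rho^{-2}$, giving $\sum_{i}a_{i}=\e[\rho]^{-1}\e[\rho]=1$ and $\sum_{i}\pi_{i}=\e[1]=1$.

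For the right eigenvector it is enough to prove the criticality relation $\sum_{j\ge1}j\,m_{i,j}=i$ for each $i\ge1$, since multiplying by $\e[\rho]$ turns it into $\sum_{j}m_{i,j}b_{j}=b_{i}$. This is a one-generation computation: differentiating $\sum_{j\ge0}\binom{i+j-1}{j}y^{j}=(1-y)^{-i}$ at $y={\rm e}^{-V(x)}/(1+{\rm e}^{-V(x)})$ gives $\sum_{j\ge0}j\binom{i+j-1}{j}{\rm e}^{-jV(x)}(1+{\rm e}^{-V(x)})^{-i-j}=i\,{\rm e}^{-V(x)}$, and summing against the environment yields $\sum_{j}j\,m_{i,j}=i\,\e\big[\sum_{|x|=1}{\rm e}^{-V(x)}\big]=i\,\psi(1)=i$.

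The left eigenvector is the substantial part. The main tool I would set up first is the renewal decomposition of $W$: peeling off the $\ell=1$ summand gives $W\stackrel{d}{=}{\rm e}^{-\hat S_{1}}(1+W')$ with $W'$ an independent copy of $W$, equivalently $\rho\stackrel{d}{=}\rho'/(\rho'+{\rm e}^{-\hat S_{1}})$ with $\rho'$ an independent copy of $\rho$; combined with the defining relation $\e[f(\hat S_{1})]=\e\big[\sum_{|x|=1}f(V(x)){\rm e}^{-V(x)}\big]$ this produces, for every nonnegative measurable $g$, the fixed-point identity
$$
\e[g(\rho)]=\e\Big[\sum_{|x|=1}{\rm e}^{-V(x)}\,\e_{\rho}\big[g\big(\tfrac{\rho}{\rho+{\rm e}^{-V(x)}}\big)\big]\Big],
$$
where $V(x)$ is frozen and the inner expectation is over an independent copy of $\rho$. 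With this in hand I would compute $\sum_{i}a_{i}m_{i,j}$ head-on: inserting the explicit $m_{i,j}$, interchanging sum and expectation (legitimate by nonnegativity), and summing the negative-binomial series in $i$ collapses the $i$-sum via $\sum_{i\ge1}\binom{i+j-1}{j}(1-\rho)^{i-1}(1+{\rm e}^{-V(x)})^{-i-j}=({\rm e}^{-V(x)}+\rho)^{-j-1}$, leaving $\sum_{i}a_{i}m_{i,j}=\e[\rho]^{-1}\,\e\big[\sum_{|x|=1}{\rm e}^{-jV(x)}\,\e_{\rho}[\rho^{2}(\rho+{\rm e}^{-V(x)})^{-j-1}]\big]$. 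Applying the fixed-point identity to $g(r)=r^{2}(1-r)^{j-1}$ and using the simplification $1-\tfrac{\rho}{\rho+{\rm e}^{-V(x)}}=\tfrac{{\rm e}^{-V(x)}}{\rho+{\rm e}^{-V(x)}}$ shows the bracket equals $\e[\rho^{2}(1-\rho)^{j-1}]$, so $\sum_{i}a_{i}m_{i,j}=\e[\rho]^{-1}\e[\rho^{2}(1-\rho)^{j-1}]=a_{j}$, which finishes the proof.

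The only genuine difficulty is the bookkeeping in the last paragraph: choosing the $\rho$-coordinate so that both the negative-binomial sum over $i$ and the renewal identity for $\rho$ line up, and recognising $g(r)=r^{2}(1-r)^{j-1}$ as the test function that makes them match. As a sanity check one can specialise to the $\lambda$-biased walk $V(x)=|x|\ln m$, where $\hat S_{1}\equiv\ln m$, hence $W\equiv(m-1)^{-1}$ and $\rho\equiv(m-1)/m$, recovering $a_{i}=(m-1)m^{-i}$ and $b_{i}=(1-m^{-1})i$ of Section~\ref{s:multi-type}.
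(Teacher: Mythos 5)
Correct, and underneath the $\rho=(1+W)^{-1}$ change of variables it is the same proof as the paper's: both resum the negative-binomial series in $i$ and then exploit shift-invariance of the random walk $(\hat S_\ell)_{\ell\ge1}$ together with the identity $\e[f(\hat S_1')]=\e\big[\sum_{|x|=1}f(V(x))\ee^{-V(x)}\big]$ to recognise the result as $a_j$ (the paper writes the shift explicitly as $\hat S_\ell'=\hat S_1'+\hat S_{\ell-1}$, you package it as a fixed-point identity for $\rho$ applied to $g(r)=r^2(1-r)^{j-1}$). The reparametrisation through $\rho$ is a welcome tidying of the algebra, but not a different route.
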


\begin{proof}
Simple computations ensure that both normalizations are right, and that $(b_i)_{i\in\N^*}$ is a right eigenvector of the mean matrix. Let $C_a:=\e\Big[(1+\sum_{\ell≥1} \ee^{-\hat{S}_\ell})^{-1}\Big]$ and suppose that the random walk $(\hat S_{k})_{k\ge 1}$ is taken independent of $\v$. We have for all $j\in\N^*$,
\begin{align*}
C_a\sum_{i≥1} a_i m_{i,j}
&=
\e\left[\frac{(\sum_{\ell ≥ 1} \ee^{-\hat S_{\ell}})^{-1}}{1+\sum_{\ell ≥ 1} \ee^{-\hat S_{\ell}}} \sum_{|x|=1}\sum_{i≥1} \binom{i+j-1}{j} \Big(\frac{\sum_{\ell ≥ 1} \ee^{-\hat S_{\ell}}}{1+\sum_{\ell ≥ 1} \ee^{-\hat S_{\ell}}}\Big)^i {{\rm e }^{-jV(x)} \over (1+\ee^{-V(x)})^{i+j}}\right] \\
&=\e\left[\sum_{|x|=1} \frac{\Big(\ee^{-V(x)}(1+\sum_{\ell≥1}\ee^{-\hat{S}_{\ell}})\Big)^{j-1}}{\Big(1+\ee^{-V(x)}(1+\sum_{\ell≥1}\ee^{-\hat{S}_{\ell}})\Big)^{j+1}}\ee^{-V(x)}\right] \\
&=\e\left[\frac{\Big(\ee^{-\hat{S}_1'}(1+\sum_{\ell≥1}\ee^{-\hat{S}_{\ell}})\Big)^{j-1}}{\Big(1+\ee^{-\hat{S}_1'}(1+\sum_{\ell≥1}\ee^{-\hat{S}_{\ell}})\Big)^{j+1}}\right]
\end{align*} 
where $\hat{S}_1'$ has the same law than $\hat{S}_1$, and is independent of $(\hat{S}_\ell)_{\ell≥1}$. This yields 
\begin{equation*}
C_a\sum_{i≥1} a_i m_{i,j}=\e\left[\frac{\Big(\sum_{\ell≥1}\ee^{-\hat{S}_{\ell}'}\Big)^{j-1}}{\Big(1+\sum_{\ell≥1}\ee^{-\hat{S}_{\ell}'}\Big)^{j+1}}\right]=C_a a_j,
\end{equation*} 
where we set for all $\ell>1$ $\hat{S}_\ell':= \hat S_{1}' +  \hat{S}_{\ell-1}$, and used the fact that with this setting $(\hat{S}_\ell)_{\ell≥1}$ has the same law than $(\hat{S}_\ell')_{\ell≥1}$. 
\end{proof}

Now the many-to-one lemma states that for any bounded function $f:\mathbb{N}^n\to\mathbb{R}$, we have 
$$
\e\left[\sum_{|x|=n,N_{x}\ge 1} f(N_{x_{1}},N_{x_{2}},\ldots,N_{x_{n-1}},N_{x})   \right] 
=
\e\left[{1\over \hat N_{n}}f(\hat N_{1},\hat N_2,\ldots,\hat N_{n-1},\hat N_{n})\right]
$$

\noindent where $(\hat N_{i})_{i\ge 0}$ is a Markov chain on $\mathbb{N}^*$ starting at $1$ and with transition probabilities from $i$ to $j$ given by $m_{i,j}{b_{j}\over b_{i}}$. We adapt Lemma~\ref{l:lyapounov}.   
\begin{lemma}\label{l:lyapounov2}
Let $\hat \gamma_{1}:=\min\{i\ge 1\,:\, \hat N_{i} =1\}$. There exists $r>0$ such that $\e\left[{\rm e}^{r\hat \gamma_{1}}\right]<\infty$.
\end{lemma}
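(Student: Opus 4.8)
The plan is to follow the scheme of the proof of Lemma~\ref{l:lyapounov}: exhibit a Foster--Lyapunov function, deduce exponential moments for the return time to a finite set via Theorem~15.2.5 in~\cite{meyn}, and then bound $\hat\gamma_{1}$ by a geometric number of such return times. The only genuinely new ingredient, which I would carry out first, is the drift computation. Take $V(i):=i$, and recall that $\hat p_{i,j}=m_{i,j}b_{j}/b_{i}=\frac{j}{i}\binom{i+j-1}{j}\,\e\big[\sum_{|x|=1}e^{-jV(x)}(1+e^{-V(x)})^{-(i+j)}\big]$, since $b_{j}/b_{i}=j/i$. Setting $r_{x}:=e^{-V(x)}/(1+e^{-V(x)})\in(0,1)$, so that $e^{-jV(x)}(1+e^{-V(x)})^{-(i+j)}=r_{x}^{j}(1-r_{x})^{i}$, and applying the operator $(r\partial_{r})^{2}$ to $\sum_{j\ge0}\binom{i+j-1}{j}r^{j}=(1-r)^{-i}$, one gets $\sum_{j\ge0}j^{2}\binom{i+j-1}{j}r^{j}=i\,r\,(1+ri)(1-r)^{-i-2}$; multiplying by $(1-r_{x})^{i}$ and simplifying gives $\sum_{j\ge1}j^{2}\binom{i+j-1}{j}r_{x}^{j}(1-r_{x})^{i}=i\,e^{-V(x)}(1+e^{-V(x)})+i^{2}e^{-2V(x)}$. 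Summing over $|x|=1$, taking expectations (all terms are nonnegative) and dividing by $i$ yields
$$
\sum_{j\ge1}j\,\hat p_{i,j}=\big(\psi(1)+\psi(2)\big)+\psi(2)\,i=\big(1+\psi(2)\big)+\psi(2)\,i,
$$
which is finite because $\psi(2)<1$ by hypothesis.

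Since $\psi(2)<1$, I would then fix $d\in(\psi(2),1)$ and an integer $i_{0}$ large enough that $\sum_{j}j\,\hat p_{i,j}\le d\,i$ for all $i>i_{0}$; with $C:=\{i\le i_{0}\}$ this is the geometric drift inequality $\sum_{j}\hat p_{i,j}V(j)\le d\,V(i)+b\,{\bf 1}_{C}(i)$ for a suitable $b$, and $V\ge 1$. The chain is irreducible on $\N^{*}$: from $1$, $\hat p_{1,j}=j\,\e[\sum_{|x|=1}r_{x}^{j}(1-r_{x})]>0$ for every $j\ge 1$, and from any $i$, $\hat p_{i,1}=\e[\sum_{|x|=1}e^{-V(x)}(1+e^{-V(x)})^{-(i+1)}]>0$, so the finite set $C$ is petite. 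Theorem~15.2.5 in~\cite{meyn} (or, equivalently, the supermartingale $\kappa^{n}V(\hat N_{n}){\bf 1}_{\{n<\tau_{C}\}}$, as in the proof of Lemma~\ref{l:lyapounov}) then provides a $\kappa>1$ for which the first return time $\tau_{C}$ to $C$ has $\e[\kappa^{\tau_{C}}\mid \hat N_{0}=i]$ finite for every $i$ and bounded uniformly over $i\in C$. Finally $\hat p_{i,1}$ is nonincreasing in $i$, so $c_{0}:=\inf_{i\in C}\hat p_{i,1}=\hat p_{i_{0},1}>0$, i.e.\ from every state of $C$ the chain is at $1$ one step later with probability at least $c_{0}$.

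To conclude I would decompose $\hat\gamma_{1}$ along the successive visits of $(\hat N_{k})$ to $C$ (recall $\hat N_{0}=1\in C$): the number of visits to $C$ before the chain first sits at $1$ is stochastically dominated by a geometric variable of parameter $c_{0}$, while the durations of the excursions between consecutive visits to $C$ are, conditionally on the sequence of visited states, independent, each with a $\kappa_{1}$-exponential moment bounded uniformly over the finite set $C$ of possible starting states, for any $\kappa_{1}\in(1,\kappa]$. That uniform bound tends to $1$ as $\kappa_{1}\to1$, so one may choose $\kappa_{1}$ small enough that it times $1-c_{0}$ stays below $1$; then $\e[\kappa_{1}^{\hat\gamma_{1}}]<\infty$, which is the lemma with $r=\ln\kappa_{1}$. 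The only real obstacle is the drift computation of the first paragraph: that is where the hypothesis $\psi(2)<1$ enters, being exactly what makes the slope of the drift strictly less than $1$, and, unlike in the deterministic case of Lemma~\ref{l:lyapounov}, here the one-step law of $(\hat N_{k})$ need not have any exponential moment when $e^{-V(x)}$ is unbounded, so no naive comparison of single excursions works and the Foster--Lyapunov route is essential.
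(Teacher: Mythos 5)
Your proof is correct and takes essentially the same route as the paper: the drift computation gives $\sum_{j\ge1} j\,\hat p_{i,j}=\psi(1)+\psi(2)(i+1)$, which matches the paper's stated formula, and $\psi(1)=1$ together with $\psi(2)<1$ then yields the geometric drift, after which you conclude exactly as the paper does by invoking the Foster--Lyapunov theorem and a geometric number of excursions from $\{i\le i_{0}\}$ as in Lemma~\ref{l:lyapounov}. The extra detail you supply (the explicit $(r\partial_{r})^{2}$ verification, the uniform bound over $C$, and the observation that one-step exponential moments may fail when $e^{-V(x)}$ is unbounded) is a sound elaboration of the paper's terse ``A computation leads to \ldots\ We conclude as in Lemma~\ref{l:lyapounov}''.
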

\begin{proof} We set for all $i\geq 1$, $F(i):=i$. A computation leads to 
\begin{equation*}
\sum_{j≥1}\hat{p}_{i,j}F(j)=\e\left[ \sum_{|x|=1}\ee^{-V(x)} \right]+\e\left[ \sum_{|x|=1}\ee^{-2V(x)} \right](i+1).
\end{equation*}
Now since $\psi(2)<1$,  there exists $d<1$ such that for all $i>i_0$ large enough, $\sum_{j≥1}\hat{p}_{i,j}F(j)<dF(i)$. We conclude as in Lemma~\ref{l:lyapounov}.
\end{proof}

\subsection{Reduction of trees }

We adapt Section~\ref{s:reduction}. We define in the same way the trees $\Tb$, $\Tb^{(r)}$ and $\Tb^{(w)}$ there. Lemma~\ref{l:lyapounov2} ensures that $\Tb^{(r)}$ satisfies conditions (ii) and (iii) of Section~\ref{s:preliminaries}. The constants $\Sigma_{f}$, $C_{1}$ and $C_{2}$ in Section~\ref{s:preliminaries} associated to $\Tb^{(r)}$ are given by  $\Sigma_{f}={\eta \over b_{1}\sqrt{a_{1}}}$, $C_{1}=a_{1}$, $C_{2} ={1\over a_{1}b_{1}}$, and $\eta>0$ is given by 
$$
\eta^2={2\over 1-\psi(2)} \e\Big[\Big( 1+\sum_{\ell≥1} \ee^{-\hat{S}_{\ell}} \Big)^{-1}\Big]\e\Big[\sum_{\substack{|x|,|y|=1 \\ x\neq y}} \ee^{-V(x)}\ee^{-V(y)}\Big]
$$
 (in the setting and notation of~\cite{loic}, we have for all $i,j,k\in\N^*$, 
 $$
 Q_{i,j}^{k}=\binom{i+j+k-1}{i,j,k-1}\e\Big[ \sum_{|x|=1,|y|=1,x\neq y} \frac{\ee^{-iV(x)}\ee^{-jV(y)}}{(1+\ee^{-V(x)}+\ee^{-V(y)})^{i+j+k}} \Big]).
 $$

\bigskip

Similarly, the tree $\Tb^{(w)}$ is associated to the constants $\Sigma_{f}={\eta \over b_{1}\sqrt{a_{1}}}$, $C_{1}={2\over a_{1}b_{1}}$ and $C_{2} ={1\over a_{1}b_{1}}$. Proposition~\ref{p:loic} still holds (with our choice of $b_{1}$ and $\sigma$). The proof remains unchanged.

\section{Proof of Theorem \ref{t:rwre}}

 We adapt Section~\ref{s:proof}. We give here an analogue of Lemma~\ref{l:range}. This analogue is less precise but is still sufficient for our purpose.  
 \begin{lemma} \label{l:range2}
For $k\ge 1$ large enough, $R_{\tau_{\proot}^{(k\fl{\ln^{10}(k)})}} \ge k^2$
$\BW^*$-a.s.
\end{lemma}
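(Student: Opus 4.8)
The plan is to adapt the proof of Lemma~\ref{l:range} to the random walk in random environment setting. Recall that the key point in Lemma~\ref{l:range} was a quantitative lower bound on the probability that one excursion from $\proot$ visits at least $k^2$ distinct vertices, obtained by exhibiting within a single excursion many independent attempts to go deep and by using that the subtree of vertices of type $1$ is a critical Galton--Watson tree with finite variance. Here the analogous role is played by the subtree of vertices of type $N_x=1$, which by Lemma~\ref{l:hm} together with the construction of $\Tb^{(r)}$ in Section~6.2 is a critical Galton--Watson tree with finite variance $\Sigma_f^2 = \eta^2/(a_1^2 b_1^2) \in (0,\infty)$ under the hypotheses $\psi(1)=1$, $\psi(2)<1$ and $\e[\sum_{|x|=|y|=1,x\neq y} \ee^{-V(x)}\ee^{-V(y)}]<\infty$. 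Consequently there is a constant $c>0$ such that $\p(R_{\tau_{\proot}^{(1)}} \ge k^2) \ge c/k$ for all $k\ge 1$: with probability $\ge c/k$ the critical tree of type-$1$ vertices has at least $k^2$ vertices, hence the excursion visits at least $k^2$ distinct vertices. Because we only need an almost-sure statement here (not a uniform tail bound), we do not need to separate the quenched probability from the environment as carefully as in Lemma~\ref{l:range}.

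The argument then proceeds as follows. First I would fix $k$ and write, using the strong Markov property at the successive visits to $\proot$,
\[
\P_{\v}\left(R_{\tau_{\proot}^{(k\fl{\ln^{10}(k)})}} < k^2\right) \le \P_{\v}\left(R_{\tau_{\proot}^{(1)}} < k^2\right)^{k\fl{\ln^{10}(k)}}.
\]
On the environment event $\{\P_{\v}(R_{\tau_{\proot}^{(1)}} \ge k^2) \ge 1/(k\ln^5(k))\}$ this is at most $\exp(-\ln^5(k)\fl{\ln^{10}(k)}/\ln^5(k)\cdot(1+o(1)))$; more simply, it is bounded by $(1 - 1/(k\ln^5 k))^{k\ln^{10}k} \le \exp(-\ln^5(k)(1+o(1)))$, which is summable in $k$. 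So by Borel--Cantelli it suffices to show that $\GW$-a.s.\ (i.e.\ $\BW^*$-a.s.) the environment event $\{\P_{\v}(R_{\tau_{\proot}^{(1)}} \ge k^2) < 1/(k\ln^5(k))\}$ occurs only finitely often, and for that it suffices that its $\BW$-probability is summable in $k$.

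To bound $\BW(\P_{\v}(R_{\tau_{\proot}^{(1)}} \ge k^2) < 1/(k\ln^5 k))$ I would reuse the mechanism of Lemma~\ref{l:range}: couple, for each $x\in\t$, a copy $(X_n^{(x)})_n$ of the walk started at $x$ and killed at $\px$, so that $(X_n^{(x)})$ agrees with the walk after its first visit to $x$; then the events $\mathcal E_{x,k} := \{(X_n^{(x)})\text{ visits }\ge k^2 \text{ distinct vertices before }\px\}$ for $|x|=\ell$ are, under $\P_\v$, mutually independent and independent of $\sigma\{N_y^{(1)},|y|\le\ell\}$. Writing $p_\ell(x):=\P_\v(N_x^{(1)}\ge 1)$ for the (explicit, comparison-with-a-one-dimensional walk) escape probability to level $|x|$, one gets $\P_\v(R_{\tau_{\proot}^{(1)}}\ge k^2) \ge p_\ell(x)\,\P_\v(\bigcup_{|x|=\ell}\mathcal E_{x,k})$ for any fixed $x$ of height $\ell$, and by independence $\P_\v(\bigcap_{|x|=\ell}\mathcal E_{x,k}^c \mid \sigma\{N\}) = \prod_{|x|=\ell} (1 - \P_\v(\mathcal E_{x,k}))$. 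Since $\p(\mathcal E_{x,k}) \ge \p(R_{\tau_{\proot}^{(1)}}\ge k^2)$ when $x$ is the root, and more generally the annealed escape-and-visit probability is $\ge c'/k$ for vertices of moderate height by the critical-tree estimate above, one gets $\p(\bigcap_{|x|=\ell}\mathcal E_{x,k}^c \mid \#\{|x|=\ell\}\ge W_\ell) \le (1-c'/k)^{W_\ell}$. Choosing $\ell=\ell(k)$ of order $\ln^2 k$ so that the survival martingale guarantees $\#\{|x|=\ell\}\ge \ee^{c''\ell}\gg k\ln^6 k$ with probability $1 - o(k^{-2})$ (here one uses that $m>1$, i.e.\ the Galton--Watson tree is supercritical, together with Kesten--Stigum/standard large-deviation bounds for the martingale, exactly as the event $\mathcal S_\eps$ is used in Lemma~\ref{l:range}), this forces $\P_\v(R_{\tau_{\proot}^{(1)}}\ge k^2) \ge 1/(k\ln^5 k)$ off an environment event of $\BW$-probability $o(k^{-2})$, which is summable. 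Combining the two summable bounds and Borel--Cantelli gives the claim.

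The main obstacle I expect is the verification that the annealed escape-and-deep-visit probability $\p(\mathcal E_{x,k})$ is bounded below by a constant times $1/k$ uniformly over the relevant vertices $x$ (those of height up to $O(\ln^2 k)$), rather than merely for $x=\root$: the escape probability $p_\ell(x)$ from the root to level $\ell$ already decays like $\prod \ee^{-V}$-type quantities, so one must argue that, conditioned on the walk reaching level $\ell$, the residual tree hanging there still contains a critical Galton--Watson subtree of type-$1$ vertices with a macroscopic (of order $k^2$ with probability $\gtrsim 1/k$) component, and that the product over the $\ell$-th generation of $(1-c'/k)$ is small once the generation size exceeds $k\ln^6 k$. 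Balancing the decay of $p_\ell$ against the exponential growth of $\#\{|x|=\ell\}$ — which is precisely the role of $\mathcal S_\eps$ in Lemma~\ref{l:range} and needs the moment assumption $\e[\sum_{x\neq y}\ee^{-V(x)}\ee^{-V(y)}]<\infty$ to control the variance of the critical tree — is the delicate accounting; everything else is a routine transcription of Section~\ref{s:proof}.
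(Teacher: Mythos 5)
Your high-level skeleton (strong Markov at $\proot$, reduce to an environment event of summable $\BW$-probability, then Borel--Cantelli) matches the paper's. But the central step — producing, with overwhelming probability, many independent $\mathcal{E}_{x,k}$-type attempts each with success probability $\gtrsim 1/k$ — is done differently, and the route you sketch does not survive the passage from the biased case to RWRE.

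Your plan is to work at a fixed generation $\ell=\ell(k)\sim\ln^2 k$ and use a Kesten--Stigum type bound to ensure $\#\{|x|=\ell\}$ is huge, so the product of $(1-c'/k)$ over that generation is small. This is exactly how Lemma~\ref{l:range} works, but it hinges on a feature that is specific to the $\lambda$-biased walk: the escape probability $\P_{\t}(N_x^{(1)}\ge 1)=(m-1)/(m^{|x|+1}-1)$ is a \emph{deterministic} function of $|x|$, the same for every vertex at generation $\ell$, so one can pick $\ell$ to make it of order $1/(k\sqrt a)$ and then multiply by $\#\{|x|=\ell\}$. In the RWRE case $\P_{\v}(N_x^{(1)}\ge1)=\bigl(1+\ee^{V(x_1)}+\cdots+\ee^{V(x)}\bigr)^{-1}$ depends on the whole potential along the path to $x$, and at a fixed generation $\ell\sim\ln^2 k$ this quantity is wildly heterogeneous across $x$ — typically as small as $\ee^{-c\ln^2 k}$, far below the needed $1/(k\ln^2 k)$. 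So a uniform lower bound on the escape probability over $\{|x|=\ell\}$ simply fails, and the product over that generation does not give what you want. Moreover the paper does not assume $m=E_{\GW}[\nu]<\infty$, so the ``survival martingale $\ee^{c''\ell}$'' estimate you invoke is unavailable; the only martingale at hand is the additive one $W_n=\sum_{|x|=n}\ee^{-V(x)}$.

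The missing idea, and the one the paper uses, is to replace the fixed generation by the stopping line $G_k$ of vertices $x$ where $\ee^{V(x)}$ first exceeds $k\ln^2 k$: by construction every $x\in G_k$ (after trimming to $G_k'$) has escape probability of the right order $\gtrsim 1/(k\ln^5 k)$. The accounting you call ``delicate'' then becomes two separate facts. First, a Biggins--Kyprianou optional-line argument applied to $W_{G_k}:=\sum_{x\in G_k}\ee^{-V(x)}$, plus Biggins' martingale convergence, gives a uniform-in-$k$ lower bound $W_{G_k}>\eps$ a.s.\ on non-extinction — this is the replacement for $\mathcal S_\eps$. Second, Nerman's renewal theorem for C-M-J processes, applied to the strictly increasing skeleton $\t^+$, converts $W_{G_k}>\eps$ into a cardinality lower bound $\#G_k'\gtrsim k\ln^2 k$, which is what feeds the product $(1-c/k)^{\#G_k'}\le \ee^{-c'\eps\ln^2 k}$. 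You also need the separate Borel--Cantelli estimate that $\max_{x\in G_k}|x|\le\ln^2 k$ eventually, using $\psi(b)<1$ for suitable $b$. None of these three ingredients appears in your sketch, and the fixed-generation substitute you propose does not do the same job. The rest of your outline (the exponential bound $\P_\v(R_{\tau^{(k\fl{\ln^{10}k})}}<k^2)\le(1-1/(k\ln^5 k))^{k\ln^{10}k}$, Borel--Cantelli) is fine and matches the paper.
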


\bigskip

\noindent {\it Proof}. Define the set of vertices $G_{k}$ which contains all vertices $x\in \t$ such that $ \ee^{V(x)}>k\ln^2(k)$ and $\ee^{V(y)}<k\ln^2(k)$ for any strict ancestor $y$ of $x$.  In other words, $G_k$ is the set of vertices which are the first of their ancestry line to be such that $\ee^{V(x)}>k\ln^2(k)$. First let us collect some few facts about $G_{k}$. Notice that the $G_k$ are simple optional lines increasing in $k$, as defined in~\cite{BiKy}. Applying Theorem~6.1 of~\cite{BiKy} to $\sum_{x\in G_k} \ee^{-V(x)}$ and using the fact that $\sum_{|x|=n} \ee^{-V(x)}$ converges $\BW^*$-a.s.\ to a positive random variable (see~\cite{lyons97}), we get that $\BW^*$-a.s, there exists $\eps>0$ small enough so that the event
\begin{equation*}
\mathcal{S}_\eps=\left\{ \forall k\geq 1,\: \sum_{x \in G_{k}} \ee^{-V(x)}>\eps \right\}
\end{equation*}

\noindent holds.  Furthermore, for any real number $b>0$, we have
\begin{align*}
\p\Big( \exists x\in G_k\: :\: |x|>\fl{\ln^2(k)} \Big)&\leq \e\Big[ \sum_{|x|=\fl{\ln^2(k)}} \1{\ee^{V(x)}<k\fl{\ln^2(k)}} \Big]\\
&\leq (k\fl{\ln^2(k)})^{b} \e\Big[ \sum_{|x|=\fl{\ln^2(k)}} \ee^{-b V(x)} \Big]=(k\fl{\ln^2(k)})^{b}\psi(b)^{\fl{\ln^2(k)}}. 
\end{align*}
Taking $b$ such that $\psi(b)<1$, this last quantity is summable in $k$ and the Borel-Cantelli lemma gives us that {\rm BW}-a.s., for $k$ large enough, for all $x\in G_k$, $|x|<\fl{\ln^2(k)}$. 
Finally, considering $\t^+$ the set of vertices $x\in\t$ such that $V(y)<V(x)$ for any strict ancestor $y$ of $x$,  notice that the tree induced by $(\t^+,(V(x))_{x\in\t^+})$ is a branching random walk with positive increments along paths in $\t^+$. More precisely, setting for any $x\in\t^+$, $\sigma_x:=V(x)$ and $\lambda_x=\infty$, it is a C-M-J process as defined in~\cite{Nerman} with Malthusian parameter $\alpha=1$. 
Applying Theorem~6.3 of~\cite{Nerman} to characteristics $\phi_x(t):=\1{t>0}\sum_{\parent{y}=x}\1{\sigma_{y}-\sigma_x >t}$ and $\psi_x(t):=\1{t>0}\sum_{\parent{y}=x}\ee^{t-(\sigma_y-\sigma_x)}\1{\sigma_{y}-\sigma_x>t}$ (conditions 6.1 and 6.2 there being satisfied with $\beta=0$), we get that $\BW^*$-a.s.\
\begin{equation*}
\frac{ k\ln^2(k)\sum_{x\in G_k} \ee^{-V(x)}}{\sum_{x\in G_k} 1}=\frac{\sum_{x\in\t^+} \psi_x( \ln(k\fl{\ln^2(k)})-\sigma_x)}{\sum_{x\in\t^+} \phi_x(\ln(k\fl{\ln^2(k)})-\sigma_x)} \sto{k\to\infty} C\in(0;1],
\end{equation*}

\noindent where $C$ is a positive deterministic constant. Let $G_k':=\{x\in G_k\: :\: e^{V(x)}< {2\over C} k \ln^2(k) \}$. We observe that $k\ln^2(k)\sum_{x\in G_k} \ee^{-V(x)} \le \#G'_{k} + {C\over 2} (\#G_{k}-\#G'_{k})$ which implies that $\# G'_{k} \ge c \eps k \ln^2(k)$ for $k$ large enough $\BW^*$-a.s.\ on the event $\mathcal S_{\eps}$, where $c$ is any positive constant smaller than ${1\over 2-C}$.  

Let us now proceed to the proof. We follow the lines of the proof of Lemma~\ref{l:range} with the setting $a=\fl{\ln^{10}(k)}$. We only need to show that $\BW$-as on the event $\mathcal S_{\eps}$, we have 
$$
\P_{\v}(R_{\tau_{\proot}^{(1)}} \ge k^2) \ge {1\over k\fl{\ln^5(k)}}
$$

\noindent for $k$ large enough. We define again $(X_{n}^{(x)})_{n\ge 0}$ as the Markov chain starting at $x$ and $\mathcal E_{x,k}$  the event that the walk $(X_{n}^{(x)})_{n\ge 0}$ visits more than $k^2$ distinct vertices before hitting $\px$. By comparison with a one-dimensional random walk, $\P_{\v}(N_{x}^{(1)}\ge 1)$ is now equal to $\left( 1 +{\rm e}^{V(x_{1})} + \ldots + {\rm e}^{V(x)}\right)^{-1}$ which is greater than $\frac{1}{(2/C)k\fl{\ln^2(k)}\times (|x|+1)}\geq\frac{2}{\fl{\ln(k)}k\fl{\ln^2(k)}\times \fl{\ln^2(k)} }$ if $x\in G_k'$ for $k$ large enough. We deduce that {\rm BW}-a.s.\ on the event $\mathcal S_{\eps}$, for $k\geq 1$ large enough,
$$
\P_{\v}(R_{\tau_{\proot}^{(1)}} \ge k^2) \ge \frac{2}{k\fl{\ln^5(k)}}\P_{\v}\left(\bigcup_{ x \in G_k'} \mathcal E_{x,k}\right).
$$

\noindent We see that we simply need that $\P_{\v}\left(\bigcup_{ x \in G_k'} \mathcal E_{x,k}\right) \ge {1\over 2}$ for $k$ large enough, $\BW$-as on the event $\mathcal S_{\eps}$. We showed that $\#G'_{k} \ge c\eps k \ln^2(k)$  for $k$ large enough $\BW$-a.s.\ on the event $\mathcal S_{\eps}$. Hence, the proof will be complete if we prove that the probabilities
$$
\BW\left( \# G_k'> c \eps k\ln^2(k)\:\textrm{and}\: \P_{\v}\left(\bigcup_{x \in G'_{k}} \mathcal E_{x,k}\right)<1/2  \right).
$$
are summable in $k$. This is done by following the final lines  of the proof of Lemma~\ref{l:range}.  $\Box$

\bigskip

We give now the analogue of Lemma~\ref{l:longpath}.

\begin{lemma}\label{l:longpath2}
There exists a constant $c>0$ such that for any $\ell,k\ge 1$, 
\begin{equation*}
\p\left(\exists |x|\ge \ell\,:\,  N_{y}^{(k)} \ge 2,\, \forall\, y\in   ]\!]  \root,x  [\![ \right) \le 2 k^2{\rm e}^{-c \ell} .
\end{equation*}
\end{lemma}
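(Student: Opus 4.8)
\emph{Plan.} The proof should mirror that of Lemma~\ref{l:longpath}, with the quenched law $\P_{\v}$ in place of $\P_{\t}$, Lemma~\ref{l:lyapounov2} in place of Lemma~\ref{l:lyapounov}, and the branching random walk identity $E_{\BW}\bigl[\sum_{|y|=n}\ee^{-2V(y)}\bigr]=\psi(2)^n$ in place of the formula $\P_{\t}(N_y^{(1)}\ge1)=(m-1)/(m^{|y|+1}-1)$. Fix $\ell,k\ge1$. First I would reduce to $|x|=\ell$ by passing to the ancestor of $x$ at generation $\ell$, and then keep only the constraints at generations $\ge\ell/2$; this shows that the event of the statement is contained in $\mathcal E_1\cup\mathcal E_2$, where (with the convention $N^{(0)}\equiv0$)
\begin{align*}
\mathcal E_1 &:= \bigcup_{i=1}^{k}\bigcup_{|x|=\ell}\bigl\{\, N_y^{(i)}-N_y^{(i-1)}\ge2 \ \text{ for all }\ y\in]\!]\root,x[\![ \ \text{with}\ |y|\ge\ell/2 \,\bigr\},\\
\mathcal E_2 &:= \bigcup_{1\le i<j\le k}\bigcup_{|v|=\ell/2}\bigl\{\, N_v^{(i)}-N_v^{(i-1)}\ge1,\ N_v^{(j)}-N_v^{(j-1)}\ge1 \,\bigr\}.
\end{align*}
The inclusion rests on the deterministic remark that, if the event holds along a path to some $x$ with $|x|=\ell$ and $\mathcal E_2$ fails, then the ancestor of $x$ at generation $\ell/2$ and all vertices of the path below it are reached by exactly one excursion from the root, which must therefore cross each of the corresponding edges at least twice; that is $\mathcal E_1$.

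For $\mathcal E_1$ I would use that excursions from the root are identically distributed (though not independent) under $\p$, so $\p(\mathcal E_1)\le k\,\e\bigl[\sum_{|x|=\ell}{\bf 1}_{\{N_y^{(1)}\ge2\ \forall\, y\in]\!]\root,x[\![,\ |y|\ge\ell/2\}}\bigr]$; by the many-to-one formula of Section~\ref{s:rwre} the right-hand side equals $k\,\e\bigl[\hat N_\ell^{-1}{\bf 1}_{\{\hat N_i\ge2,\ \ell/2\le i\le\ell\}}\bigr]\le\tfrac{k}{2}\,\p(\hat N_i\ge2,\ \ell/2\le i\le\ell)$, and conditioning on the last time $\le\ell/2$ at which $(\hat N_i)$ visits $1$ together with the Markov property bounds this by $\tfrac{k}{2}\,\p(\hat\gamma_1>\ell/2)$, which is exponentially small in $\ell$ by Lemma~\ref{l:lyapounov2}.

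For $\mathcal E_2$ I would use that excursions from the root are i.i.d.\ under $\P_{\v}$, so that $\P_{\v}(N_v^{(i)}-N_v^{(i-1)}\ge1,\ N_v^{(j)}-N_v^{(j-1)}\ge1)=\P_{\v}(N_v^{(1)}\ge1)^2$; since, as recalled in the proof of Lemma~\ref{l:range2}, $\P_{\v}(N_v^{(1)}\ge1)=\bigl(1+\ee^{V(v_1)}+\dots+\ee^{V(v)}\bigr)^{-1}\le\ee^{-V(v)}$, a first moment bound gives $\p(\mathcal E_2)\le\binom{k}{2}\,\e\bigl[\sum_{|v|=\ell/2}\ee^{-2V(v)}\bigr]=\binom{k}{2}\,\psi(2)^{\ell/2}$, which is exponentially small since $\psi(2)<1$. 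Summing the two estimates and enlarging the constant so that $2k^2\ee^{-c\ell}\ge1$ covers the small values of $\ell$, one obtains the announced bound $2k^2\ee^{-c\ell}$.

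The main, and essentially only new, point is the deterministic dichotomy behind the inclusion $\subseteq\mathcal E_1\cup\mathcal E_2$; the remainder is a word-for-word transcription of Lemma~\ref{l:longpath}, the two analytic inputs being Lemma~\ref{l:lyapounov2} for the $\mathcal E_1$ term and the hypothesis $\psi(2)<1$ for the $\mathcal E_2$ term, both available under the assumptions of Theorem~\ref{t:rwre}.
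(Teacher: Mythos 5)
Your proposal is correct and follows the same route as the paper: the paper's own proof of Lemma~\ref{l:longpath2} is simply the three-sentence remark that the argument of Lemma~\ref{l:longpath} goes through verbatim with the bound $\P_{\v}(N_y^{(1)}\ge1)\le\ee^{-V(y)}$ replacing the explicit formula $(m-1)/(m^{|y|+1}-1)$ in~(\ref{eq:majlongpath}), and with $\e\bigl[\sum_{|y|=\ell/2}\ee^{-2V(y)}\bigr]=\psi(2)^{\ell/2}$ providing the exponential decay since $\psi(2)<1$. You have spelled out exactly the decomposition $\mathcal E_1\cup\mathcal E_2$, the many-to-one reduction to $\hat\gamma_1$ via Lemma~\ref{l:lyapounov2}, and the $\mathcal E_2$ bound that the paper is implicitly invoking; the only cosmetic slip is the phrase ``enlarging the constant so that $2k^2\ee^{-c\ell}\ge1$'', where what is meant is \emph{shrinking} $c$ so that the trivial bound $\p(\cdot)\le1$ absorbs the finitely many small $\ell$ while the exponential estimates take over for large $\ell$.
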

\noindent {\it Proof}.  The proof is the same as before. The only difference lies in equation (\ref{eq:majlongpath}) where we use the upper bound $\P(N_{y}^{(1)}\ge 1) \le {\rm e}^{-V(y)}$. Then we observe that $\e\left[\sum_{|y|=\ell/2}  {\rm e}^{-2V(x)}\right]$ is exponentially small in $\ell$ by our assumptions. $\Box$ 

\bigskip
\bigskip

\noindent {\it Proof of Theorem~\ref{t:rwre}}. We adapt the proof of Theorem~\ref{t:main}. First we prove that in the annealed and quenched cases, we can restrict to the event $\mathcal E_{n}$ defined in equation (\ref{eq:En})  where we take now $j_{n}:= \fl{\sqrt{n}\ln^5(n)}$.  Here is why we can restrict to the event $\mathcal E_{n}$:  Lemma~\ref{l:range2} and Lemma~\ref{l:longpath2} show that we can restrict to $\{R_{\tau_{\proot}^{(j_n)}} \ge  n,\, \max_{x\in\mathcal Z_{n}} |x|\le \ln^2(n) \}$. Then for any vertex $x\in \t$ and any integer $k$, $\E_{\v}\left[N_{x}^{(k)}\right] = k\ee^{-V(x)}$. The Markov inequality implies that $\P_{\v}(\sum_{|x|\le \ln^2(n)} N_{x}^{(j_n)} > j_n\ln^3(n)) \le {1\over \ln^3(n)}\sum_{k\le \ln^2(n)}\sum_{|x|=k}\ee^{-V(x)}$ which goes to $0$ $\BW$-a.s.\ as $n\to \infty$ since $\sum_{|x|=k} \ee^{-V(x)}$ converges a.s.\ when $k\to\infty$. Then the proof in the annealed case follows the same lines, replacing the measures $\GW$ and $\P_{\t}$ respectively by $\BW$ and $\P_{\v}$. The proof of the quenched case is also similar. The only difference lies in equation (\ref{eq:quenched}). The probability to touch a vertex at generation ${1\over 2} \ln^2(n)$ is smaller than $\max_{|x|={1\over 2}\ln^2(n)} \ee^{-V(x)}$ hence the upper bound in (\ref{eq:quenched}) becomes $nj_{n}  \max_{|x|={1\over 2}\ln^2(n)} \ee^{-V(x)}$. We just need  to show that it is smaller than the general term of a deterministic summable series on an event of $\BW^*$-measure one. This would be implied by the fact that ${\liminf_{k\to\infty}} \min_{|x|=k} {V(x)\over k}>c$ for some constant $c>0$ a.s.,  which holds since  ${1\over \psi(2)^k} \sum_{|x|=k} \ee^{-2V(x)}$  is a nonnegative martingale hence converges a.s.\ and $\psi(2)\in(0,1)$.  $\Box$


\begin{thebibliography}{99}
\baselineskip=13.5pt

\bibitem{aldousI}
	Aldous, D. (1991).
	\href{http://projecteuclid.org/euclid.aop/1176990534}
	{The continuum random tree I}. 
	{\it Ann. Probab.} {\bf 19}, 1--28.

\bibitem{aldousIII}
	Aldous, D. (1993).
	\href{http://projecteuclid.org/euclid.aop/1176989404}
	{The continuum random tree III}. 
	{\it Ann. Probab.} {\bf 21}, 248--289.

\bibitem{andreoletti-debs}
	Andreoletti, P.\ and Debs, P. (2014). 
	\href{http://arxiv.org/abs/1112.3797}
	{The number of generations entirely visited for} \href{http://arxiv.org/abs/1112.3797}{recurrent random walks on random environment}. 
	{\it J. Theoret. Probab.} {\bf 27}, 518--538. 

\bibitem{arvinda}
	Basdevant, A.-L. and Singh, A. (2008).
	\href{http://arxiv.org/abs/math/0611580}
	{On the speed of a cookie random walk}. 
	{\it Probab. Theory Related Fields} {\bf 141}, 625--645.

\bibitem{arvindb}
	Basdevant, A.-L. and Singh, A. (2008).
	\href{http://arxiv.org/abs/math/0703275}
	{Rate of growth of a transient cookie random} \href{http://arxiv.org/abs/math/0703275}{walk}. 
	{\it Electron. J. Probab.} {\bf 13}, 811--856.

\bibitem{arvindc}
	Basdevant, A.-L. and Singh, A. (2009).
	\href{http://arxiv.org/abs/0803.3284}
	{Recurrence and transience of a multi-excited} \href{http://arxiv.org/abs/0803.3284}{random walk on a regular tree}. 
	{\it Electron. J. Probab.} {\bf 14}, 1628--1669.

\bibitem{BiKy}
	Biggins, J.D. and Kyprianou, A.E. (2004). 
	\href{http://people.bath.ac.uk/ak257/BK2004.pdf}
	{Measure Change in Multitype Branching}. 
	{\it Adv. in Appl. Probab.} {\bf 36} 544--581. 

\bibitem{loic}
	de Raph\'elis, L. (2014). 
	\href{http://arxiv.org/abs/1405.3916}
	{Scaling limit of multitype Galton-Watson trees with infinitely} \href{http://arxiv.org/abs/1405.3916}{many types}. 
	{\it Ann. Inst. Henri Poincaré Probab. Stat.} (to appear). 

\bibitem{dgpz02}
	Dembo, A., Gantert, N., Peres, Y. and Zeitouni, O. (2002).
	\href{http://projecteuclid.org/euclid.aop/1079021470}
	{Large deviations for random walks on Galton--Watson trees: averaging and uncertainty}. 
	{\it Probab. Theory Related Fields} {\bf 122}, 241--288.

\bibitem{dembo}
	Dembo, A. and Sun, N. (2012). 
	\href{http://arxiv.org/abs/1011.4056}
	{Central limit theorem for biased random walk on} \href{http://arxiv.org/abs/1011.4056}{multi-type Galton--Watson trees}. 
	{\it Electron. J. Probab.} {\bf 17}, 1--40.

\bibitem{DuLG02}
	Duquesne, T. and Le Gall, J.-F. (2002).
	\href{http://www.math.u-psud.fr/~jflegall/Mono-revised.pdf}
	{Random Trees, Lévy Processes and Spatial} \href{http://www.math.u-psud.fr/~jflegall/Mono-revised.pdf}{Branching Processes}. 
	{\it Astérisque} {\bf 281}.
	
\bibitem{DuLG05}
	Duquesne, T. and Le Gall, J.-F. (2005).
	\href{http://arxiv.org/abs/math/0501079}
	{Probabilistic and fractal aspects of L\'evy} \href{http://arxiv.org/abs/math/0501079}{trees}. 
	{\it Probab. Theory Related Fields} {\bf 131}, 533--603.
	

\bibitem{pitman}
	Evans, S., Pitman, J and Winter, A. (2006). 
	\href{http://arxiv.org/abs/math/0402293}
	{Rayleigh processes, real trees, and root} \href{http://arxiv.org/abs/math/0402293}{growth with re-grafting}. 
	{\it Probab. Theory Related Fields} {\bf 134}, 81--126.
		
\bibitem{faraud}
	Faraud, G. (2011). 
	\href{http://arxiv.org/abs/0812.1948}
	{A central limit theorem for random walk in a random environment} \href{http://arxiv.org/abs/0812.1948}{on marked Galton-Watson trees}. 
	{\it Electron. J. Probab.} {\bf 16}, 174--215. 

\bibitem{Hu}
	Hu, Y. (2014). 
	\href{http://arxiv.org/abs/1412.4507}
	{Local times of subdiffusive biased walks on trees}. 
	{\it Preprint.}

\bibitem{HuShi07}
	Hu, Y. and Shi, Z. (2007).
	\href{http://arxiv.org/abs/math/0603363}
	{A subdiffusive behavior of recurrent random walk in} \href{http://arxiv.org/abs/math/0603363}{random environment on a regular tree}. 
	{\it Probab. Theory Related Fields} {\bf 138}, 521--549.
	
\bibitem{HuShi07'}
	Hu, 	Y. and Shi, Z. (2007). 
	\href{http://arxiv.org/abs/math/0608036}
	{Slow movement of random walk in random environment} \href{http://arxiv.org/abs/math/0608036}{on a regular tree}. 
	{\it Ann.\ Probab.} {\bf 35}, 1978--1997.
	
\bibitem{HuShi15}
	Hu, Y. and Shi, Z. (2015).
	{Convergence of the biased random walk on a tree}.
	{\it To appear}.

\bibitem{kks}
	Kesten, H., Kozlov, M.V. and Spitzer, F. (1975).
	\href{https://eudml.org/doc/89251}
	{A limit law for random walk in} \href{https://eudml.org/doc/89251}{random environment}. 
	{\it Compos. Math.} {\bf 30}, 145--168.

\bibitem{legall}
	Le Gall, J.-F. (2006). 
	\href{http://www.math.u-psud.fr/~jflegall/Toul-rev.pdf}
	{Random real trees}. 
	{\it Ann. Fac. Sci. Toulouse Math.} {\bf 15}, p. 35--62.
	
\bibitem{Lyons}
    Lyons, R.\ (1990).
	\href{http://projecteuclid.org/euclid.aop/1176990730}
    {Random walks and percolation on trees}. 
    {\it Ann.\ Probab.} {\bf 18}, 931--958.


\bibitem{lyons97}
    Lyons, R.\ (1997).
    \href{http://arxiv.org/abs/math/9803100}
    {A simple path to Biggins' martingale convergence for branching} \href{http://arxiv.org/abs/math/9803100}{random walk}.
    In: {\it Classical and Modern Branching
    Processes} (Eds.: K.B.~Athreya and
    P.~Jagers).
    {\it IMA Volumes in Mathematics and its
    Applications} {\bf 84}, 217--221. Springer,
    New York.
    
    
\bibitem{LP92}
	 Lyons, R. and Pemantle,R.\ (1992).
	{Random walk in a random environment and first-passage percolation on trees}. 
         {\it Ann.\ Probab.} {\bf 20}, 125--136.

\bibitem{LyPePe95}
	Lyons, R., Pemantle, R.  \ and  Peres, Y. \ (1995).
	\href{http://journals.cambridge.org/article_s0143385700008543}
	{Ergodic Theory on Galton--Watson} \href{http://journals.cambridge.org/article_s0143385700008543}{trees: Speed of random walk and dimension of harmonic measure}. 
	{\it Ergodic Theory Dynam. Systems} {\bf 15}, 593--619.

\bibitem{LyPePe96}
	Lyons, R., Pemantle, R.  \ and  Peres, Y. \ (1996).
	\href{http://link.springer.com/article/10.1007\%2Fs004400050064}
	{Biased random walks on Galton--} \href{http://link.springer.com/article/10.1007\%2Fs004400050064}{Watson trees}. 
	{\it Probab. Theory Related Fields} {\bf 106}, 249--264.

\bibitem{LyPePe97}
	Lyons, R. , Pemantle, R. \ and Peres, Y.\ (1997).
	\href{http://stat-www.berkeley.edu/~peres/surveys/opentree.ps}
	{Unsolved problems concerning ran-} \href{http://stat-www.berkeley.edu/~peres/surveys/opentree.ps}{dom walks on trees}. 
 	{\it IMA Vol. Math. Appl.}  {\bf 84}, 223--237.
	
\bibitem{meyn}
	{Meyn, S. P. and Tweedie, R. L.} \ (1993). 
	\href{http://probability.ca/MT/}
    {\it Markov chains and stochastic stability}, 
 	Springer-Verlag London Ltd.
 
\bibitem{miermont}
	Miermont, G. \ (2008). 
	\href{http://arxiv.org/abs/math/0610807}
	{Invariance principles for spatial multitype Galton--Watson} \href{http://arxiv.org/abs/math/0610807}{trees}. 
	{\it Ann. Inst. Henri Poincar\'e Probab. Stat.} {\bf 44}, 1128--1161. 
	
\bibitem{Nerman}
	Nerman, O.\ (1981). 
	\href{http://link.springer.com/article/10.1007%2FBF00534830}
	{On the convergence of supercritical general (C-M-J) branching} \href{http://link.springer.com/article/10.1007%2FBF00534830}{processes}. 
	{\it Z. Wahrscheinlichkeitstheorie verw. Gebiete} {\bf 57}, 365--395.

\bibitem{Neveu}
	Neveu, J.  \ (1986).
	\href{https://eudml.org/doc/77276}
	{Arbres et processus de Galton--Watson}. 
	{\it Ann. Inst. Henri Poincar\'e Probab. Stat.} {\bf 22}, 199--207.

\bibitem{PeZe06}
	Peres, Y. \ and Zeitouni, O.\ (2006).
	\href{http://arxiv.org/abs/math/0606625}
	{A central limit theorem for biased random walks} \href{http://arxiv.org/abs/math/0606625}{on Galton--Watson trees.}
	{\it Probab.\ Theory Related Fields} {\bf 140}, 595--629.

\end{thebibliography}
\end{document}